\newtheorem{thm}{Theorem}[section]
\newtheorem{prop}[thm]{Proposition}
\newtheorem{lemma}[thm]{Lemma}
\newtheorem{defn}[thm]{Definition}
\newtheorem{example}[thm]{Example}
\newtheorem{examples}[thm]{Examples}
\newtheorem{lemma-def}[thm]{Lemma-Definition}
\newtheorem{theorem}{Theorem}[section]
\newtheorem{corollary}[thm]{Corollary}
\newtheorem{definition}[thm]{Definition}
\newtheorem{remark}[thm]{Remark}
\newtheorem{remarks}[thm]{Remarks}
\font\russ=wncyr10  1
\def\sha{\hbox{\russ\char88}}
\newcommand{\QQ}{\mathbb Q}
\newcommand{\ZZ}{\mathbb Z}
\newcommand{\RR}{\mathbb R}
\newcommand{\CC}{\mathbb C}
\newcommand{\bq}{\mathbb Q}
\newcommand{\bz}{\mathbb Z}
\newcommand{\A}{\mathfrak A}
\newcommand{\calF}{\mathcal F}
\newcommand{\calL}{\mathcal L}
\newcommand{\p}{\mathfrak{p}}
\DeclareMathOperator{\Sel}{Sel}
 \DeclareMathOperator{\Fit}{Fit}
 \DeclareMathOperator{\im}{im}
 \DeclareMathOperator{\nr}{nr}
\DeclareMathOperator{\Hom}{Hom} 
\DeclareMathOperator{\Ext}{Ext} 
\DeclareMathOperator{\cok}{cok} 
\DeclareMathOperator{\Tr}{Tr}
\renewcommand{\det}{\text{det}}
\def\YEAR{\year}\newcount\VOL\VOL=\YEAR\advance\VOL by-1995
\def\firstpage{1}\def\lastpage{1000}
\def\received{}\def\revised{}
\def\communicated{}
\def\magnification{\afterassignment\m@g\count@}
\def\m@g{\mag=\count@\hsize6.5truein\vsize8.9truein\dimen\footins8truein}
\font\eightrm=cmr8
\font\caps=cmcsc10
\font\Caps=cmcsc10 scaled \magstep1   % Title                 % Theorem, Lemma etc
\def\DocMath{}
\renewcommand{\@evenhead}{%
    \ifnum\thepage>\lastpage\rlap{\thepage}\hfill%
    \else\rlap{\thepage}\slshape\leftmark\hfill{\caps\SAuthor}\hfill\fi}%
\renewcommand{\@oddhead}{%
    \ifnum\thepage=\firstpage{\DocMath\hfill\llap{\thepage}}%
    \else{\slshape\rightmark}\hfill{\caps\STitle}\hfill\llap{\thepage}\fi}%
\def\TSkip{\bigskip}
\newbox\TheTitle{\obeylines\gdef\GetTitle #1
\ShortTitle  #2
\SubTitle    #3
\Author      #4
\ShortAuthor #5
\EndTitle
{\setbox\TheTitle=\vbox{\baselineskip=20pt\let\par=\cr\obeylines%
\halign{\centerline{\Caps##}\cr\noalign{\medskip}\cr#1\cr}}%
        \copy\TheTitle\TSkip\TSkip%
\def\next{#2}\ifx\next\empty\gdef\STitle{#1}\else\gdef\STitle{#2}\fi%
\def\next{#3}\ifx\next\empty%
    \else\setbox\TheTitle=\vbox{\baselineskip=20pt\let\par=\cr\obeylines%
    \halign{\centerline{\caps##} #3\cr}}\copy\TheTitle\TSkip\TSkip\fi%
%\setbox\TheTitle=\vbox{\let\par=\cr\obeylines%
%\halign{\centerline{\caps##} #4\cr}}\copy\TheTitle\TSkip\TSkip%
\centerline{\caps #4}\TSkip\TSkip%
\def\next{#5}\ifx\next\empty\gdef\SAuthor{#4}\else\gdef\SAuthor{#5}\fi%
\ifx\received\empty\relax
    \else\centerline{\eightrm Received: \received}\fi%
\ifx\revised\empty\TSkip%
    \else\centerline{\eightrm Revised: \revised}\TSkip\fi%
\ifx\communicated\empty\relax
    \else\centerline{\eightrm Communicated by \communicated}\fi\TSkip\TSkip%
\catcode'015=5}}\def\Title{\obeylines\GetTitle}
\def\Abstract{\begingroup\narrower
    \parskip=\medskipamount\parindent=0pt{\caps Abstract. }}
\def\EndAbstract{\par\endgroup\TSkip}
\long\def\MSC#1\EndMSC{\def\arg{#1}\ifx\arg\empty\relax\else
     {\par\narrower\noindent%
     1991 Mathematics Subject Classification: #1\par}\fi}
\long\def\KEY#1\EndKEY{\def\arg{#1}\ifx\arg\empty\relax\else
        {\par\narrower\noindent Keywords and Phrases: #1\par}\fi\TSkip}
\newbox\TheAdd\def\Addresses{\vfill\copy\TheAdd\vfill
    \ifodd\number\lastpage\vfill\eject\phantom{.}\vfill\eject\fi}
{\obeylines\gdef\GetAddress #1
\Address #2
\Address #3
\Address #4
\EndAddress
{\def\xs{5.0truecm}\parindent=0pt
\setbox0=\vtop{{\obeylines\hsize=\xs#1\par}}\def\next{#2}
\ifx\next\empty % 1 address
     \setbox\TheAdd=\hbox to\hsize{\hfill\copy0\hfill}
\else\setbox1=\vtop{{\obeylines\hsize=\xs#2\par}}\def\next{#3}
\ifx\next\empty % 2 addresses
     \setbox\TheAdd=\hbox to\hsize{\hfill\copy0\hfill\copy1\hfill}
\else\setbox2=\vtop{{\obeylines\hsize=\xs#3\par}}\def\next{#4}
\ifx\next\empty\ % 3 addresses
     \setbox\TheAdd=\vtop{\hbox to\hsize{\hfill\copy0\hfill\copy1\hfill}
                \vskip20pt\hbox to\hsize{\hfill\copy2\hfill}}
\else\setbox3=\vtop{{\obeylines\hsize=\xs#4\par}}
     \setbox\TheAdd=\vtop{\hbox to\hsize{\hfill\copy0\hfill\copy1\hfill}
                \vskip20pt\hbox to\hsize{\hfill\copy2\hfill\copy3\hfill}}
\fi\fi\fi\catcode'015=5}}\gdef\Address{\obeylines\GetAddress}
\begin{document}
%%%%% ------------- fill in your data below this line  -------------------
%%%%%    The following lines \Title ... \EndAddress must ALL be present
%%%%%    and in the given order.
\Title

On non-abelian higher special elements of $p$-adic representations

\ShortTitle
Non-abelian higher special elements
\SubTitle
%%%%%    A possible subtitle goes here.
\Author Daniel Macias Castillo  and Kwok-Wing Tsoi

\ShortAuthor Daniel Macias Castillo  and Kwok-Wing Tsoi
%%%%%%   Running title for even numbered pages, ONE line, please.
%%%%%%   If none is given, \Author will be used instead.
\EndTitle

\Abstract We develop a theory of `non-abelian higher special elements' in the non-commutative exterior powers of the Galois cohomology of $p$-adic representations. We explore their relation to the theory of organising matrices and thus to the Galois module structure of Selmer modules. In concrete applications, we relate our general theory to the formulation of refined conjectures of Birch and Swinnerton-Dyer type and to the Galois structure of Tate-Shafarevich and Selmer groups of abelian varieties.
\EndAbstract

%\MSC Preliminary version of September 2011
%11S40; %(L-function of varieties);
%Secondary
%11R29 %Picard and Class of Orders
%20C05 %K0 of orders
%\EndMSC
%\KEY Preliminary version of September 2011
%%%%%    Keywords and Phrases:
%\EndKEY
%%%%%    All 4 \Address lines below must be present. To center the last
%%%%%    entry, no empty lines must be between the following \Address
%%%%%    and \EndAddress lines.
\Address

Departamento de Matem\'aticas,
Universidad Aut\'onoma de Madrid and Instituto de Ciencias Matem\'aticas,
28049 Madrid, Spain.
daniel.macias@uam.es
daniel.macias@icmat.es

\Address

National Taiwan University,
Taipei 10617,
Taiwan (ROC).
kwokwingtsoi@ntu.edu.tw
%King's College London,
%Dept. of Mathematics,
%London WC2R 2LS,
%U. K.
%kwok-wing.tsoi@kcl.ac.uk

\Address
\Address
\EndAddress
%\tableofcontents
\section{Introduction}

\subsection{The general theory}

Before discussing the main objects of study in this article we must recall the theory of organising matrices.

\subsubsection{Organising matrices}

In the study of the classical (commutative) Iwasawa theory of elliptic curves, Mazur and Rubin first suggested in \cite{mr-1,mr0} the possibility of a theory of `organising modules' as a means of encoding detailed arithmetic information in a single matrix. A little later, Mazur and Rubin \cite{mr} succesfully associated (under certain hypotheses) such matrices to the corresponding Selmer complexes that were introduced by Nekov\'a\v r in \cite{nek}.

Subsequently, Burns and the first author \cite{omac} have both refined, and extended, this theory, in a way which associates a canonical family of `organising matrices' to general $p$-adic representations, considered with general (non-abelian) coefficients.

To be a little more precise, let $F/k$ be a finite Galois extension of global fields with Galois group $G$ and let $p$ be a prime number. Then to any $p$-adic Galois representation $T$ defined over $k$ one may associate certain \'etale cohomology complexes of $\ZZ_p[G]$-modules. These complexes are often `admissible' in the sense of loc. cit. (see also \S \ref{wa section} below) and in any such cases can be assigned a canonical family of organising matrices, with entries in $\ZZ_p[G]$, which encode a wide range of detailed information concerning the arithmetic of $T$ over $F/k$.

In this article we will give a generalisation of the construction of organising matrices. In addition, we will clarify their relationship to the theory of non-commutative higher Fitting invariants that has recently been developed by Burns and Sano \cite{nagm}. In this way we obtain a refinement of one of the main algebraic results (Corollary 3.3) of \cite{omac}. See \S \ref{omacsection} below for more details.

\subsubsection{Non-abelian higher special elements}

In the commutative setting Burns, Sano and the second author \cite{hse} have recently developed a theory of `higher special elements' as a generalisation of the notion of higher rank Euler systems. Such elements are also associated to admissible complexes in the sense of \cite{omac} and live in the higher exterior powers of the cohomology modules of the complexes. We recall that in arithmetic applications these modules are thus strictly related to the Galois cohomology of $p$-adic representations. 

Burns and Sano \cite{nagm} have also recently defined a natural notion of non-commutative higher rank Euler systems for $p$-adic representations $T$, relative to arbitrary Galois extensions of a number field over which $T$ is defined. They have then proved, under mild hypotheses, the existence of such non-commutative Euler systems whose rank depends explicitly on $T$.

The importance of this development is supported by, among other factors, a growing interest in the study of leading term conjectures that are relevant to non-abelian Galois extensions. These include the equivariant Tamagawa number conjecture of Burns and Flach \cite{bufl99}, the non-commutative Tamagawa number conjecture of Fukaya and Kato \cite{fukaya-kato}, the main conjecture of non-commutative Iwasawa theory for elliptic curves without complex multiplication of Coates, Fukaya, Kato, Sujatha and Venjakob \cite{cfksv}, the non-abelian Brumer-Stark conjectures formulated independently by Burns \cite{bdals} and by Nickel \cite{brumer}, or the recent formulation of a refined conjecture of Birch and Swinnerton-Dyer type by Burns and the first author \cite{rbsd}.

In order to arrive at the relevant notion of non-commutative higher rank Euler systems, Burns and Sano \cite{nagm} have also recently developed an algebraic theory of non-commutative exterior powers. The main objective of this article is to use this theory in order to define a completely general notion of `non-abelian higher special element'.

The theory of non-commutative exterior powers comes naturally equipped with canonical duality pairings and our definition gives a direct relationship between the images of non-abelian higher special elements under these pairings and the reduced norms of the corresponding organising matrices. 

By exploiting our understanding of the properties of organising matrices we are therefore able to prove that non-abelian higher special elements satisfy strong integrality properties and also encode delicate information regarding the Galois structure of Selmer modules of $p$-adic representations. These are the contents of our main algebraic result, given below as Theorem \ref{integrality}.

In this way, we hope to contribute to the future study of non-commutative higher rank Euler systems and of general leading term conjectures.

We emphasise that, exactly as in the commutative case considered in \cite{hse}, our construction of higher special elements does not depend on fixing `separable' tuples of elements (in the highest degree cohomology modules of admissible complexes) but rather arises from arbitrary choices of tuples. This fact makes potential arithmetic applications of our theory significantly finer than any specialisations currently present in the literature.%, as we shall very briefly illustrate in the rest of this introduction.

%\subsection{Arithmetic applications}

The degree of generality of our algebraic methods allows for subsequent applications in a wide range of natural arithmetic settings, including to the compact support cohomology complexes, the finite support cohomology complexes in the sense of Bloch and Kato \cite{bk} and the Nekov\'a\v r-Selmer complexes, that arise from very general $p$-adic representations. 

In this article we will however only focus on applications to certain classes of Selmer complexes associated to abelian varieties (see \S \ref{arithmetic} below). Before discussing these applications let us however mention other settings in which our algebraic results will play a significant role in future work.
%the arithmetic of very specific representations. In \S \ref{introStick} we will briefly discuss a potential application to the Weil-\'etale cohomology complexes of the multiplicative group that are defined by Burns, Kurihara and Sano in \cite{gm} and then, in the rest of the article, we restrict our arithmetic applications to classes of Selmer complexes associated to abelian varieties (see \S \ref{arithmetic} below).

%\subsubsection{Non-abelian higher Stickelberger elements and refined Stark conjectures}\label{introStick}

\subsection{Arithmetic applications}

\subsubsection{Refined Stark conjectures}\label{introStick}

In work in progress of Burns, Seo and the first author \cite{bms}, our main algebraic result will be applied to the study of refined Stark conjectures and of the annihilation of ideal class groups by higher derivatives at $z=0$ of Artin $L$-functions.

A little more specifically, our main result will motivate the definition of canonical lattices of `non-abelian higher Stickelberger elements', and lead to the prediction that all such elements should provide integral annihilators of suitable ideal class groups. These predictions will be shown to hold unconditionally in the setting of global function fields.

Let us also note that such predictions will significantly extend and refine the non-abelian Brumer-Stark conjectures due to Burns \cite{bdals} and to Nickel \cite{brumer}.

In addition, in future work, we will also apply our algebraic results to study the annihilation of both suitable higher $K$-groups, and of wild kernels in higher $K$-theory, by higher derivatives at integer values of Artin $L$-functions. We aim to extend the constructions of conjectural annihilators of such modules that are carried out by Nickel in \cite{nickel2,nickelnew}.

\subsubsection{Refined conjectures of Birch and Swinnerton-Dyer type}\label{introRBSD}

Burns and the first author \cite{rbsd} have formulated a completely general `refined Birch and Swinnerton-Dyer conjecture' (or `refined BSD conjecture' in the sequel) for the Hasse-Weil-Artin $L$-series associated to an abelian variety $A$ defined over a number field $k$ and to a finite Galois extension $F$ of $k$.

This conjecture is equivalent to the relevant case of the equivariant Tamagawa number conjecture and is thus also compatible with the main conjecture of non-commutative Iwasawa theory formulated by Coates et al. in \cite{cfksv}.

Its formulation relies on a construction of canonical Nekov\'a\v r-Selmer complexes associated to choices of semi-local points on $A$. Under certain hypotheses, it may also be reformulated in terms of `classical Selmer complexes' that are closely related to the finite support cohomology that was introduced by Bloch and Kato in \cite{bk}.

In \cite[\S 8]{rbsd}, assuming that $F/k$ is abelian, Burns and the first author then study the congruence relations between the values at $z=1$ of higher derivatives of Hasse-Weil-Artin $L$-series, as well as their relation to the Galois structure of the Tate-Shafarevich and Selmer groups of $A$ over $F$, that are encoded in the refined BSD conjecture. (Both in loc. cit and in the sequel, it is always assumed that the relevant Tate-Shafarevich are finite.)

Our algebraic results will now allow us to extend this study to general finite Galois extensions $F/k$. In particular, in Theorem \ref{big conj} and the remarks that follow it, we will obtain an explicit description of the predictions that are encoded in the refined BSD conjecture in the general case. We are hopeful that it may be possible in the future to numerically test such predictions in non-abelian examples.

%In \S \ref{ts} below we shall apply our algebraic results to obtain a generalisation of the results of loc. cit. to general Galois extensions $F/k$. In this manner we now explicitly understand the predictions that are encoded in the refined BSD conjecture in the general case. We are hopeful that it may be possible in the future to numerically test such predictions in non-abelian examples.

In \S \ref{dihedralsect} we then specifically consider (generalised) dihedral twists of elliptic curves over general number fields. 
By combining our approach with a result of Mazur and Rubin we are able to obtain strikingly explicit predictions for the derivatives of Hasse-Weil-Artin $L$-series of such twists, as we briefly discuss in the next section.

\subsubsection{Tate-Shafarevich groups of dihedral twists of elliptic curves}\label{introDih}

Let $A$ be an elliptic curve and let $F/k$ be generalised dihedral, in the sense of \cite{mr2}, of degree $2p^n$, say. Assume that $F/k$ is unramified at all places of $k$ at which $A$ has bad reduction. Let $K/k$ be the corresponding quadratic subextension of $F/k$, and assume that all $p$-adic places of $k$ split completely in $K/k$ and that the rank of $A(K)$ is odd.
In this setting, we first derive from the work of Mazur and Rubin in loc. cit. the existence of a point $Q$  in $A(F)$ on which ${\rm Gal}(F/k)$ acts in a specific manner.

Moreover, claim (ii) of Theorem \ref{dihedralthm} below (in combination with Theorem \ref{big conj}) gives a generalisation of the predictions studied by Burns, Wuthrich and the first author in \cite[Thm. 5.8]{bmw}. Namely we show that if the refined BSD conjecture is valid, then the derivatives of Hasse-Weil-Artin $L$-series, normalised by N\'eron-Tate heights associated to such a point $Q$, provide integral annihilators of the $p$-primary Tate-Shafarevich group $\sha(A_F)[p^\infty]$ of $A$ over $F$. We recall that, in addition to various other additional hypotheses, the group $\sha(A_F)[p^\infty]$ was assumed to vanish in \cite[Thm. 5.8]{bmw}, and $A(K)$ was assumed to have rank equal to $1$.

These predictions may be rendered strikingly explicit in many cases of interest and in various levels of generality.

\begin{examples}\label{introexamples}{\em \

\noindent{}(i)
By imposing some of the additional hypotheses of \cite[Thm. 5.8]{bmw}, but still without forcing the vanishing of $\sha(A_F)[p^\infty]$, we are led in claim (iii) of Theorem \ref{dihedralthm} below to predict that the elements $\mathcal{Q}_\psi$ that occur in \cite[Thm. 5.8]{bmw} (but constructed using our chosen point $Q$), as $\psi$ ranges over the irreducible complex character of ${\rm Gal}(F/k)$, combine to provide integral annihilators of $\sha(A_F)[p^\infty]$.

%the element
%\begin{equation}\label{tQintro}(1+(-1)^{i_Q}\sigma)\cdot\sum_{\psi}t_Q^{2\psi(1)}\cdot\mathcal{Q}_\psi\cdot e_\psi\end{equation}
%provides integral annihilators of $\sha(A_F)[p^\infty]$. Here $\psi$ runs over the set of irreducible complex characters of ${\rm Gal}(F/k)$, with idempotent $e_\psi$, while
%the elements $\mathcal{Q}_\psi$ are constructed from our chosen point $Q$ in a manner which precisely recovers that of \cite[Thm. 5.8]{bmw}. In addition $i_Q\in\{0,1\}$ is determined by the equality $$\sigma(Q)=(-1)^{i_Q}Q$$ for any given element $\sigma$ of ${\rm Gal}(F/k)$ of order $2$ and $t_Q$ is the exponent of the torsion subgroup of the quotient of $A(F)_p:=\ZZ_p\otimes A(F)$ by the submodule generated by $Q$.

\noindent{}(ii)
As a concrete example of the discussion in the above paragraph, in Example \ref{heegner} below we explain how further specialisation leads to conjectural annihilation predictions for the elements $\mathcal{Q}_\psi$ corresponding to `higher Heegner points' $Q$ in (generalised) dihedral extensions $F$ of $\QQ$ that contain an imaginary quadratic field.

\noindent{}(iii) In a different direction, Christian Wuthrich kindly supplied us with the following concrete applications of Theorem \ref{dihedralthm}.
Set $k = \QQ$ and $K = \QQ(\sqrt{229})$ and write $F$ for the Galois closure of the field $L = \QQ(\alpha)$ with $\alpha^3-4\alpha+1 = 0$. Then $K \subset F$ and the group $G := {\rm Gal}(F/\QQ)$ is dihedral of order six. Let $A$ denote either of the curves 3928b1 (with equation $y^2 = x^3-x^2 + x + 4$) or 5864a1 (with equation $y^2 = x^3-x^2 -24x + 28$).

Then ${\rm rk}(A_\QQ)= 2$, ${\rm rk}(A_K) = {\rm rk}(A_L) = 3$ and ${\rm rk}(A_F) = 5$ while $\sha(A_K)[3^\infty]$ vanishes. These facts combine with \cite[Cor. 2.10(i)]{bmcw} to imply the $\ZZ_{3}[G]$-module $\ZZ_3\otimes_\ZZ A(F)$ is isomorphic to $$\ZZ_{3}[G](1-\delta) \oplus \ZZ_{3}\oplus \ZZ_{3},$$ with $\delta$ the unique non-trivial element in ${\rm Gal}(F/L)$. We let $Q$ be the image of $(1-\delta,0,0)$ under any such isomorphism. We also fix a non-trivial element $\gamma$ of ${\rm Gal}(F/K)$ and then define elements $T:=1+\gamma+\gamma^2$ and $U:=2-\gamma-\gamma^2$ of $\ZZ[G]$.%that generates over $\ZZ_{3}[\Gamma]$ a free direct summand of $A(F)_{3}$.

%In addition, ${\rm Fit}^1_{\ZZ_{3}[\Gamma]}({\rm Sel}_{3}(A_{F})^\vee)$ is contained in
%
%\[ {\rm Fit}^1_{\ZZ_{3}[\Gamma]}(\ZZ_{3}[G](1+\tau) \oplus \ZZ_{3}\oplus \ZZ_{3}) = {\rm Fit}^0_{\ZZ_{3}[\Gamma]}(\ZZ_{3}\oplus \ZZ_{3}) = I_{3}(\Gamma)^2,\]
%
%where $I_{3}(\Gamma)$ is the augmentation ideal of $\ZZ_{3}[\Gamma]$. Finally, we note that $3$ splits in $K$ and is unramified in $F$ so that for each $3$-adic place $v$ of $K$ the element $\varrho_v(F/K)$ is equal to $3$. After taking account of these facts, Remark \ref{new add} (with $F/k$ taken to be $F/K$, $a$ to be $1$ and $P_1 = Q_1$ to be $P$) shows

Then Theorem \ref{dihedralthm}(ii) and Theorem \ref{big conj} combine with the computation of `logarithmic resolvents' carried out in \cite[Prop. 8.11]{rbsd} to imply that,
if the refined BSD conjecture is valid for $A$ and $F/\QQ$, then the product
$$\beta\cdot(\Omega_A^-)^{-1}\cdot\tau^*(F/\QQ)\cdot\left(\frac{L'_S(A,\epsilon,1)}{2\langle T(Q),T(Q)\rangle_{A_F}}\cdot(1-\delta)T+3\frac{L'_S(A,\phi,1)}{\langle U(Q),U(Q)\rangle_{A_F}}\cdot U\right)$$
belongs to $\ZZ_3[G]$ and annihilates $\sha(A_{F})[3^\infty]$.
%$$\beta\cdot 3(1+\sigma)\tau^*(F/k)\cdot\bigl(\frac{L'_S(A,1)}{\Omega^+_A}e_{\bf 1}-{\bf i}\frac{L'_S(A,\phi,1)}{\Omega^-_A}e_\phi\bigr)=x\cdot {\rm nr}_{\CC[G]}\left(\sum_{g \in G}\langle g(Q),Q\rangle_{A_F}\cdot g^{-1}\right)$$

%
%\[ 9\cdot \tau^\ast(F/K)\cdot\frac{L^{(1)}_{S}(A_{F/K},1)}{\Omega_A^{F/k}} = x\cdot \sum_{g \in G}\langle g(P),P\rangle_{A_F}\cdot g^{-1}\]
%
%for an element $x$ of $\ZZ_3[G]$ that annihilates $\sha(A_{F})[3^\infty]$.% Here we have also used the fact that $w_{F/K}=1$ because each of the real places $v$ of $K$ has trivial decomposition group in $\Gamma$ so $\psi^-_v(1)=1-1=0$ and thus $w_\psi=1$ for each $\psi\in\widehat\Gamma$.

Here $\beta$ is any choice of element of the `ideal of denominators' associated to the order $\ZZ_3[G]$ by Burns and Sano \cite[Def. 3.4]{nagm}. Note also that the computations carried out by Johnston and Nickel \cite[\S 6.4]{jn} easily lead to explicit choices of $\beta$.

In addition $\Omega_A^-$ is the complex period of $A$, $\tau^*(F/\QQ)$ is the global Galois-Gauss sum of $F/\QQ$ defined in \cite[\S 4.2.1]{rbsd}, $\epsilon$ is the sign character of $G$ and $\phi$ is the irreducible character of $G$ of dimension $2$, $$S=\{2,3,229,\ell\}$$ with $\ell=491$ if $A$ is 3928b1 or with $\ell=733$ if $A$ is 5864a1 and both values at $1$ of derivatives are of the corresponding $L$-functions truncated by removing the Euler factors of primes in $S$. Further $\langle\,,\,\rangle_{A_F}$ denotes the N\'eron-Tate height pairing of $A$ defined relative to $F$.%, and ${\rm nr}_{\CC[G]}$ is the reduced norm over $\CC[G]$.
}\end{examples}

\subsection{Structure of the article}

In \S \ref{prelims} we shall first recall the definition of the category of admissible complexes from \cite{omac}, and then also the construction of the specific admissible arithmetic complexes from \cite{rbsd} that will be relevant to our applications (as described in \S \ref{introRBSD}).

In \S \ref{chelomac} we then introduce the notion of a characteristic element, recall the theory of non-commutative Fitting invariants of Burns and Sano \cite{nagm}, and describe important links between both notions through a generalised construction of organising matrices that we give in \S \ref{omacsection}.

In \S \ref{nahse} we define our notion of non-abelian higher special elements after briefly recalling the theory of non-commutative exterior powers of Burns and Sano \cite{nagm}. As a preliminary step for the study of their finer integrality properties, we prove in \S \ref{ratsection} that these elements are rational in a natural sense.

In \S \ref{mainsection} we first state our main algebraic result together with two additional corollaries. We then proceed to prove our main result in the rest of the section.

In \S \ref{ts} we finally discuss the applications of our general theory to the arithmetic of abelian varieties, as outlined in \S \ref{introRBSD} and \ref{introDih}. %These concern the formulation of explicit conjectures and results concerning the Galois structure of class groups, higher $K$-groups, and Selmer and Tate-Shafarevich groups respectively.

\subsection{General notation}

For any ring $R$ we write $Z(R)$ for its centre.
Unless otherwise specified we regard all $R$-modules as left $R$-modules. 

We write $D(R)$ for the derived category of complexes of $R$-modules. If $R$ is noetherian, then we write $D^{\rm p}(R)$ for the full triangulated subcategory of $D(R)$ comprising complexes that are `perfect' (that is, isomorphic in $D(R)$ to a bounded complex of finitely generated projective $R$-modules).

For an abelian group $M$ we write $M_{\rm tor}$ for its torsion submodule and
set $M_{\rm tf}: = M/M_{\rm tor}$, which we regard as embedded in the associated space $\bq\otimes_{\bz}M$.

For a prime $p$ and natural number $n$ we write $M[p^n]$ for the subgroup $\{m \in M: p^nm =0\}$ of the Sylow $p$-subgroup $M[p^{\infty}]$ of $M_{\rm tor}$.
We set $M_p := \ZZ_p\otimes_\ZZ M$ and write $M_p^\wedge$ for the pro-$p$ completion of $M$.

If $G$ is a finite group we write ${\rm Ir}(G)$ for the set of irreducible complex characters of $G$. For a $G$-module $M$ we write $M^\vee$ for the Pontryagin dual of $M$, endowed with the natural contragredient action of $G$.

%If $M$ is finitely generated, then for a field extension $E$ of $\QQ$ we shall often abbreviate $E\otimes_\ZZ M$ to $E\cdot M$.

%We write $|X|$ for the cardinality of a finite set $X$.

%We recall that if $\Gamma$ is finite, then a $\Gamma$-module $M$ is said to be `cohomologically-trivial' if for all subgroups $\Delta$ of $\Gamma$ and all integers $i$ the Tate cohomology group $\hat H^i(\Delta,M)$ vanishes.

%\subsection{Acknowledgements} The authors are very grateful to David Burns for many interesting discussions, his generous encouragement, and some helpful comments on an earlier version of the article. %The first author is also grateful to Werner Bley for many pertinent discussions.

%The first author is also very grateful to Cornelius Greither, Henri Johnston, Andreas Nickel, Takamichi Sano, Stefano Vigni and Christian Wuthrich for many helpful conversations and correspondence. We particularly thank Andreas Nickel for pointing out a mistake in an earlier version of the proof of Lemma \ref{rationality}.% and Christian Wuthrich for providing us with the examples discussed in Example \ref{wuthrich example}.

%The first author acknowledges financial support from the Spanish Ministry of Economy and Competitiveness, through the `Severo Ochoa Programme for Centres of Excellence in R\&D' (SEV-2015-0554) as well as through project MTM2016-79400-P.

\subsection{Funding}

The first author acknowledges financial support from grants CEX2019-000904-S and PID2019-108936GB-C21 funded by MCIN/AEI/ 10.13039/501100011033.

%the Spanish Ministry of Economy and Competitiveness, through the `Severo Ochoa Programme for Centres of Excellence in R\&D' [SEV-2015-0554] as well as through projects [MTM2016-79400-P] and [PID2019-108936GB-C21].

\subsection{Acknowledgements} The authors are very grateful to David Burns for many interesting discussions, his generous encouragement, and some helpful comments on an earlier version of the article. %The first author is also grateful to Werner Bley for many pertinent discussions.

The first author is also very grateful to Cornelius Greither, Henri Johnston, Andreas Nickel, Takamichi Sano, Stefano Vigni and Christian Wuthrich for many helpful conversations and correspondence. We particularly thank Andreas Nickel for pointing out a mistake in an earlier version of the proof of Lemma \ref{rationality}.% and Christian Wuthrich for providing us with the examples discussed in Example \ref{wuthrich example}.

\section{Preliminaries}\label{prelims}

%All modules are to be regarded, unless explicitly stated otherwise,
% as left modules. For any noetherian ring $\Lambda$ we
%write $D(\Lambda)$ for the derived category of $\Lambda$-modules and $D^{\rm p}(\Lambda)$ for the full triangulated subcategory of $D(\Lambda)$ comprising complexes that are perfect.

%For any module $N$ we write $N_{\rm tor}$ for its torsion submodule and
% set $N_{\rm tf}: = N/N_{\rm tor}$, which we regard as embedded in the associated space $\bq\otimes_{\bz}N$.

In this section we recall the category of admissible complexes and discuss specific aritmetic examples of such complexes that will play a key role in the sequel.

\subsection{Admissible complexes}\label{the complexes} We first introduce the categories of complexes to
which our main algebraic results apply. To do this we fix a
Dedekind domain $R$ of characteristic $0$ with field of fractions
$\calF$, a finite group $G$ and an $R$-order $\mathfrak{A}$ that spans a direct factor $A:=\calF\otimes_R\mathfrak{A}$ of the group ring $\calF[G]$.

\subsubsection{The category of admissible complexes}\label{wa section} We write $D^{\rm a}(\mathfrak{A})$ for the full subcategory of $D^{\rm p}(\mathfrak{A})$ comprising complexes $C = (C^i)_{i \in \bz}$ which satisfy
%We assume that the associated $F$-algebra $A :=
%F\otimes_R\mathfrak{A}$ is semisimple.
%\begin{itemize}
%\item[($*$)] any finitely generated $\mathfrak{A}$-module that is
%both $R$-torsion-free and of finite projective dimension is
%projective.\end{itemize}
%
%This condition is satisfied if $\mathfrak{A}$ is the group ring over
%$R$ of any finite group.
the following four assumptions:
\begin{itemize}
\item[(ad$_1$)] $C$ is an object of $D^{{\rm p}}(\mathfrak{A})$;
\item[(ad$_2$)] the Euler characteristic of $A\otimes_{\mathfrak{A}}C$ in the Grothendieck group $K_0(A)$
vanishes;
\item[(ad$_3$)] $C$ is acyclic outside degrees one and two;
\item[(ad$_4$)] $H^1(C)$ is $R$-torsion-free.
\end{itemize}

In the sequel we will refer to an object of $D^{\rm a}(\mathfrak{A})$ as an `admissible complex of $\mathfrak{A}$-modules'.

%We will also refer to an object of $D^{\rm a}(\mathfrak{A})$ that is acyclic outside degrees one and two as an `strictly admissible complex of $\mathfrak{A}$-modules' and we shall write $D^{{\rm s}}(\mathfrak{A})$ for the full subcategory of $D^{\rm a}(\mathfrak{A})$ comprising strictly admissible complexes.

%In addition, we will refer to an object of $D^{{\rm a}}(\mathfrak{A})$ that satisfies the condition
%\begin{itemize}
%\item[(ad$_2$')] the Euler characteristic of $C$ in the Grothendieck group $K_0(\mathfrak{A})$
%vanishes.
%\end{itemize}
%as an `integrally admissible complex of $\mathfrak{A}$-modules'.

\begin{remark}{\em  In the case that $G$ is abelian, the category $D^{\rm a}(\A)$ plays a key role in the theory of higher special elements developed by Burns, Sano and the second author in \cite{hse}. However, we caution the reader that there is a slight difference of terminology in that objects of the category $D^{\rm a}(\A)$ defined above are in loc. cit. referred to as `strictly admissible' complexes. In the general case, the category $D^{\rm a}(\A)$ provided the setting for the theory of organising matrices developed by Burns and the first author in \cite{omac}.}

%In the case that $G$ is abelian Barrett and the first author have defined in \cite{barretb} a derived category $D^{{\rm p},{\rm ad}}(\mathfrak{A})$ of `admissible complexes of $\mathfrak{A}$-modules' which coincides with the category $D^{\rm wa}(\mathfrak{A})$ that we define above. The general approach of loc. cit. has also motivated in large part the explicit arithmetic examples of these complexes that we shall discuss in \S\ref{arithmetic exams} below.
\end{remark}

\begin{example}\label{dual exam}{\em Write $\iota_\#$ for the $R$-linear anti-involution on $R[G]$ that satisfies $\iota_\#(g) = g^{-1}$ for all $g$ in $G$. Then for any idempotent $e$ of $Z(R[G])$ which is fixed by $\iota_\#$ the algebra $\mathfrak{A} := R[G]e$ is Gorenstein (in the sense of \cite[\S 2.1.2]{omac}, and with respect to the anti-involution that is obtained by restricting $\iota_\#$). In particular, the universal coefficient spectral sequence implies that the functor $C \mapsto C^*[-3]$  preserves the category $D^{\rm a}(\mathfrak{A})$.}
\end{example}

\subsubsection{Annihilation idempotents}\label{idempotents} If $C$ is an object of $D^{{\rm a}}(\mathfrak{A})$, then %for each integer $i$ in $\{1,2,3\}$ we
we write $e_0 = e_0(C)$ for the sum over all primitive idempotents
of $Z(A)$ that annihilate the
 module $H^1(A\otimes_{\mathfrak{A}}C)$. We note that the conditions (ad$_2$) and (ad$_3$) combine to imply that $H^1(A\otimes_{\mathfrak{A}}C)\cong H^2(A\otimes_{\mathfrak{A}}C)$. We use identical notation for complexes that satisfy (ad$_1$)-(ad$_3$) but do not satisfy the condition (ad$_4$).

\subsection{Arithmetic examples}\label{arithmetic}

For brevity we only discuss the specific arithmetic examples of admissible complexes that will be relevant to the applications of the general theory that will be given in this article. We refer the reader to both \cite[\S 2.2]{omac} and \cite[\S 2.2]{hse} for general discussions of how ubiquitous admissible complexes are in arithmetic.

In particular, both the Weil-\'etale cohomology complexes of the multiplicative group, and the \'etale cohomology complexes of cyclotomic representations, that are defined and studied by Burns, Kurihara and Sano in \cite{gm} and in \cite{iwa} respectively, constitute a source of natural and arithmetically significant admissible complexes. Considering our algebraic theory in these instances will lead to important arithmetic applications that will be developed in detail in future work, as mentioned in the introduction. %For brevity however we shall not discuss these examples explicitly here and simply refer the reader to the articles \cite{gm,iwa}.

\subsubsection{Nekov\'a\v r-Selmer complexes for abelian varieties}\label{selmerexample}
In this section we describe a construction of admissible complexes due to Burns and the first author \cite{rbsd}.

Let $F/k$ be a finite Galois extension of number fields with Galois group $G$.
Let $A$ be an abelian variety defined over $k$. We write $A^t$ for the dual abelian variety.

In the following result we write $X_\ZZ(A_F)$ for the integral Selmer group of $A$ over $F$ defined by Mazur and Tate in \cite{mt}.

We recall that, if the Tate-Shafarevich group $\sha(A_F)$ of $A$ over $F$ is finite, then $X_\ZZ(A_F)$ is a finitely generated $G$-module and there exists an isomorphism of $\hat \ZZ[G]$-modules $$\hat\ZZ\otimes_\ZZ X_\ZZ(A_F) \cong {\rm Sel}(A_F)^\vee$$ that is unique up to automorphisms of $X_\ZZ(A_F)$ that induce the identity map on both the submodule $X_\ZZ(A_F)_{\rm tor} = \sha(A_F)^\vee$ and quotient module $X_\ZZ(A_F)_{\rm tf} = \Hom_\ZZ(A(F), \ZZ)$. (Here $\hat\ZZ$ denotes the profinite completion of $\ZZ$).

In the sequel we write $S_\infty(k)$ for the set of archimedean places of $k$ and, for a given prime number $p$, also $S_p(k)$ for the set of places of $k$ which are $p$-adic. We write $S_{\rm bad}(A)$ for the set of places of $k$ at which $A$ has bad reduction and $S_{\rm ram}(F/k)$ for the set of places of $k$ which ramify in $F/k$. For a fixed set of places $S$ of $k$ we denote by $S(F)$ the set of places of $F$ which lie above a place in $S$.

In the following result we identify the category of finite $\ZZ_2[G]$-modules as an abelian subcategory of the category of $\ZZ[G]$-modules in the obvious way and write ${\rm Mod}^*(\ZZ[G])$ for the associated quotient category.

\begin{prop}\label{prop:perfect2} Assume that $\sha(A_F)$ is finite. Fix a `perfect Selmer structure' $\mathcal{X}$ for $A$ and $F/k$ in the sense of \cite[Def. 2.10]{rbsd}, and any finite set $S$ of places of $k$ with $$S_\infty(k)\cup S_{\rm ram}(F/k) \cup S_{\rm bad}(A)\subseteq S.$$

Then there exists a `Nekov\'a\v r-Selmer complex' ${\rm SC}_{S}(A_{F/k};\mathcal{X})$ in $D^{\rm p}(\ZZ[G])$, unique up to isomorphisms in $D^{\rm p}(\ZZ[G])$ that induce the identity map in all degrees of cohomology, that has all of the following properties.
\begin{itemize}
\item[(i)] ${\rm SC}_{S}(A_{F/k};\mathcal{X})$ is acyclic outside degrees $1, 2$ and $3$, and there is a canonical identification $H^3({\rm SC}_{S}(A_{F/k};\mathcal{X}))=(A(F)_{\rm tor})^\vee$.
\item[(ii)] In ${\rm Mod}^*(\ZZ[G])$ there exists a canonical injective homomorphism
\[ H^1({\rm SC}_{S}(A_{F/k};\mathcal{X})) \to A^t(F)\]
that has finite cokernel and a canonical surjective homomorphism
 \[ H^2({\rm SC}_{S}(A_{F/k};\mathcal{X}))\to X_\ZZ(A_F)\]
that has finite kernel.
\item[(iii)] For a given odd prime $\ell$ the object
\[ {\rm SC}_{S}(A_{F/k};\mathcal{X}(\ell)):=\ZZ_\ell\otimes_\ZZ {\rm SC}_{S}(A_{F/k};\mathcal{X}) \]
of $D^{\rm p}(\ZZ_\ell[G])$ lies in $D^{\rm a}(\ZZ_\ell[G])$ if and only is $A(F)$ contains no point of order $\ell$.
\end{itemize}
\end{prop}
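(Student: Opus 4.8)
The plan is to deduce the proposition from the general machinery used by Burns and the first author in \cite{rbsd} to construct the Nekov\'a\v r-Selmer complex, and then to check the admissibility axioms (ad$_1$)--(ad$_4$) one at a time for the localised complex ${\rm SC}_{S}(A_{F/k};\mathcal{X}(\ell))$. First I would recall that the Nekov\'a\v r-Selmer complex ${\rm SC}_{S}(A_{F/k};\mathcal{X})$ is defined, following Nekov\'a\v r \cite{nek}, as the mapping fibre of a morphism of compactly supported \'etale cohomology complexes built from the finite-flat (or `perfect') local conditions attached to the chosen perfect Selmer structure $\mathcal{X}$; its existence and uniqueness up to the stated class of isomorphisms, the fact that it lies in $D^{\rm p}(\ZZ[G])$, and the long exact cohomology sequence relating its cohomology to compactly supported cohomology and to the local terms, are all established in \cite[\S 2]{rbsd}. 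From this formalism one reads off that the complex is acyclic outside degrees $1,2,3$ and that $H^3$ is canonically $(A(F)_{\rm tor})^\vee$, which is claim (i); claim (ii) likewise follows by combining the cohomology sequence with the finiteness of $\sha(A_F)$ and the global duality comparison of the finite-flat Selmer group with $A^t(F)$ and with the Mazur--Tate integral Selmer group $X_\ZZ(A_F)$, all of which are done in \cite{rbsd} (the errors introduced by the finitely many auxiliary places in $S$, and the difference between the various local conditions, are finite and hence vanish in ${\rm Mod}^*(\ZZ[G])$, which is precisely why the statement is phrased in that quotient category).

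For claim (iii), set $C_\ell := {\rm SC}_{S}(A_{F/k};\mathcal{X}(\ell)) = \ZZ_\ell\otimes_\ZZ {\rm SC}_{S}(A_{F/k};\mathcal{X})$, which lies in $D^{\rm p}(\ZZ_\ell[G])$ since $\ZZ_\ell\otimes_\ZZ(-)$ is exact and preserves perfection, so (ad$_1$) is immediate. For (ad$_2$): the Euler characteristic in $K_0(\QQ_\ell[G])$ of $\QQ_\ell\otimes_{\ZZ_\ell}C_\ell$ is computed from the (rational) ranks of the cohomology, and here one uses that after tensoring with $\QQ_\ell$ the degree-$3$ term $(A(F)_{\rm tor})^\vee$ dies, while the comparison isomorphisms of (ii) identify the rational cohomology in degrees $1$ and $2$ with $\QQ_\ell\otimes A^t(F)$ and $\QQ_\ell\otimes X_\ZZ(A_F)_{\rm tf}$ respectively; since $A^t(F)$ and $A(F)$ have equal rank (the Mordell--Weil groups of an abelian variety and its dual are isogenous over the ground field), these two classes in $K_0(\QQ_\ell[G])$ agree and the alternating sum vanishes. (Equivalently, the vanishing of this Euler characteristic is part of what is recorded in \cite{rbsd}, being a manifestation of the functional-equation symmetry built into the Selmer complex.) For (ad$_3$): we must upgrade the rational acyclicity just used to acyclicity of $C_\ell$ itself in degree $3$; by (i), $H^3(C_\ell) = \ZZ_\ell\otimes_\ZZ(A(F)_{\rm tor})^\vee$, and this vanishes precisely when $A(F)$ has no point of order $\ell$. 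Finally (ad$_4$) asks that $H^1(C_\ell)$ be $\ZZ_\ell$-torsion-free; by (ii) there is an injection (in ${\rm Mod}^*(\ZZ_\ell[G])$, equivalently up to finite kernel) $H^1(C_\ell)\hookrightarrow \ZZ_\ell\otimes A^t(F)$, and since $A^t(F)$ is finitely generated its $\ell$-torsion is again a point of order $\ell$ on $A^t(F)$, hence (by the Weil pairing / the fact that $A$ and $A^t$ have $\ell$-torsion subgroups that are dual $G$-modules, so in particular one vanishes iff the other does) is excluded exactly under the same hypothesis `$A(F)$ contains no point of order $\ell$'. Conversely, if $A(F)$ does contain a point of order $\ell$ then $H^3(C_\ell)\neq 0$, so (ad$_3$) fails and $C_\ell\notin D^{\rm a}(\ZZ_\ell[G])$; this gives the `only if'.

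The main obstacle, and the point requiring the most care, is the bookkeeping in claim (iii) around degree $1$: one has to be sure that the only obstruction to (ad$_3$) and (ad$_4$) really is the single hypothesis about $\ell$-torsion points, i.e. that the finite discrepancies hidden inside the passage to ${\rm Mod}^*(\ZZ[G])$ in (ii) do not secretly contribute extra $\ell$-torsion to $H^1$ once we pass to genuine $\ZZ_\ell$-modules. I would handle this by working at $\ell$ from the outset --- reconstructing $C_\ell$ directly as a Nekov\'a\v r-Selmer complex over $\ZZ_\ell$ attached to the $\ell$-adic Tate module, where the finite-flat local conditions at the (odd) prime $\ell$ are literally exact and the auxiliary places in $S\setminus(S_\infty\cup S_{\rm ram}\cup S_{\rm bad})$ contribute local cohomology that is $\ZZ_\ell$-free (being built from unramified cohomology of an abelian variety with good reduction away from $S_{\rm bad}$). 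In that $\ell$-integral picture, $H^1(C_\ell)$ is genuinely a submodule of the $\ell$-adic finite-flat Selmer group, whose torsion is exactly $A^t(F)[\ell^\infty]\cap(\text{Selmer})$, and the clean statement about order-$\ell$ points drops out; the restriction to odd $\ell$ is exactly what is needed to avoid the well-known $2$-adic subtleties at archimedean and dyadic places, which is why the hypothesis appears.
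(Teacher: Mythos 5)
Your overall plan — cite the construction and broad properties from the reference and then verify (ad$_1$)–(ad$_4$) for the $\ell$-localised complex — is the right one, and your handling of (ad$_1$), (ad$_2$), (ad$_3$) and the ``only if'' direction is fine. The gap is in your argument for (ad$_4$).

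You assert that ``by the Weil pairing / the fact that $A$ and $A^t$ have $\ell$-torsion subgroups that are dual $G$-modules'' one has $A(F)[\ell]=0\Leftrightarrow A^t(F)[\ell]=0$. This is false in general. The Weil pairing gives an isomorphism of $G_F$-modules $A^t[\ell]\cong\Hom(A[\ell],\mu_\ell)$, so $A^t(F)[\ell]=\Hom_{G_F}(A[\ell],\mu_\ell)$; the vanishing of $A[\ell]^{G_F}$ (no trivial sub of $A[\ell]$) does not imply the vanishing of $\Hom_{G_F}(A[\ell],\mu_\ell)$ (no $\mu_\ell$-quotient of $A[\ell]$). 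It holds for principally polarised $A$ (or whenever $\ell$ is coprime to the degree of some polarisation, so $A[\ell]\cong A^t[\ell]$), but not for a general abelian variety. Indeed, Proposition \ref{bkprop} in this same paper records that admissibility of the classical Selmer complex requires both $A(F)[p]=0$ and $A^t(F)[p]=0$, explicitly as two separate conditions; that should already indicate that the two vanishing statements are independent and that your appeal to duality cannot be the mechanism at work here.

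The reason Proposition \ref{prop:perfect2}(iii) requires only the $A(F)$ condition is therefore not that $A^t(F)[\ell]$ is automatically controlled; it is that $H^1$ of the Nekov\'a\v r-Selmer complex attached to a perfect Selmer structure is torsion-free (away from 2) by construction. The perfect Selmer structure is precisely a choice of locally-free local condition, and the resulting $H^1$ is not literally $A^t(F)_\ell$ (it only maps to it with finite kernel and cokernel in ${\rm Mod}^*(\ZZ[G])$); the structural choice suppresses the potential $\ell$-torsion, so (ad$_4$) holds whenever (ad$_3$) does. Your proof of (ad$_4$) thus needs to be replaced by an argument that reads off the torsion-freeness of $H^1({\rm SC}_{S}(A_{F/k};\mathcal{X}(\ell)))$ directly from the shape of the mapping fibre and the freeness of the chosen local condition, rather than via the Mordell--Weil torsion of the dual variety.
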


\begin{remarks}{\em \
\noindent{}(i) Even for an odd prime $\ell$ for which $A(F)$ contains a point of order $\ell$, the general result \cite[Prop. 2.8]{hse} leads to a natural modification of ${\rm SC}_{S}(A_{F/k};\mathcal{X}(\ell))$ which belongs to the category $D^{\rm a}(\ZZ_\ell[G])$. See also \cite[Lem. 8.13(ii)]{rbsd} for more details.
\noindent{}(ii) The cohomology group $H^1({\rm SC}_{S}(A_{F/k};\mathcal{X}(\ell)))$ is in general a Selmer group in the sense of Mazur and Rubin \cite{mrkoly}. Moreover \cite[Prop. 2.12 (v) and Rem. 2.14]{rbsd} give fully explicit descriptions of the groups $H^1({\rm SC}_{S}(A_{F/k};\mathcal{X}))$ and $H^2({\rm SC}_{S}(A_{F/k};\mathcal{X}))$ in important cases.
%\item[(iii)]If $\mathcal{X}(v)\subseteq A^t(F_v)$ for every non-archimedean place $v$ in $S$ and $\mathcal{X}(v)=A^t(F_v)_{\ell(v)}^\wedge$ for all $v\notin S$, then there is an exact sequence in ${\rm Mod}^*(\ZZ[G])$ of the form
%\begin{multline*}0\to H^1({\rm SC}_{S}(A_{F/k};\mathcal{X}))\to A^t(F)\stackrel{\Delta_{S,\mathcal{X}}}{\to}\bigoplus_{v}\frac{A^t(F_v)}{\mathcal{X}(v)}\\ \to H^2({\rm SC}_{S}(A_{F/k};\mathcal{X}))\to X_\ZZ(A_F)\to 0,\end{multline*}
%where in the direct sum $v$ runs over the non-archimedean places in $S$ and $\Delta_{S,\mathcal{X}}$ is the natural diagonal map.

\noindent{}(iii) Any choice of set $S$ as above and of an ordered $\QQ[G]$-basis $\omega_\bullet$ of the space of invariant differentials
$$H^0(A^t_F,\Omega^1_{A_F^t})\cong F\otimes_k H^0(A^t,\Omega^1_{A^t})$$ defines a perfect Selmer structure $\mathcal{X}_S(\omega_\bullet)$. See \cite{rbsd} for the details of this construction. The Nekov\'a\v r-Selmer complexes obtained from such perfect Selmer structures play a key role in the formulation of the central conjecture of loc. cit..

}\end{remarks}

\subsubsection{The classical Selmer complex of an abelian variety}\label{bkexample}

In this section we give an aditional construction of admissible complexes due to Burns and the first author \cite{rbsd} which, under suitable hypotheses, provides a more explicit alternative to the general Nekov\'a\v r-Selmer complexes discussed in the previous section. These complexes will play an important role, when combined with a result of Mazur and Rubin \cite[Thm. B]{mr2}, in Theorem \ref{dihedralthm}(iii) below.

We adopt the notation and setting of the previous section and also fix an odd prime number $p$. We will consider the following list of hypotheses on $A$, $F/k$ and $p$.

In this list we fix an intermediate field $K$ of $F/k$ with the property that $G_{F/K}$ is a Sylow $p$-subgroup of $G$.
In the sequel, for any non-archimedean place $v$ of a number field we write $\kappa_v$ for its residue field.

\begin{itemize}
\item[(H$_1$)] The Tamagawa number of $A_{/K}$ at each place in $S_{\rm bad}(A_{/K})$ is not divisible by $p$;
\item[(H$_2$)] $A_{/K}$ has good reduction at all $p$-adic places of $K$;
\item[(H$_3$)] For all $p$-adic places $v$ of $K$ that ramify in $F/k$, the reduction is ordinary and $A(\kappa_v)[p^\infty]= 0$;
\item[(H$_4$)] For all non-archimedean places $v$ of $K$ that ramify in $F/k$, we have $A(\kappa_v)[p^\infty]=0$;
\item[(H$_5$)] No place of bad reduction for $A_{/k}$ is ramified in $F/k$, i.e. $S_{\rm bad}(A_{/k}) \cap S_{\rm ram}(F/k) = \emptyset$.
\end{itemize}
We refer the reader to \cite[Rem. 6.1]{rbsd} for a further discussion of these hypotheses.

We write %$\Sigma_k(F)$ for the set of $k$-embeddings $F\to k^c$ and
$Y_{F/k}$ for the module $\prod\ZZ$, with the product running over all $k$-embeddings $F\to k^c$, endowed with its natural action of $G\times G_k$.

We write $T_p(A^t)$ for the $p$-adic Tate module of  the dual variety $A^t$. Then the $p$-adic Tate module of the base change of $A^t$ through $F/k$ is equal to
\[ T_{p,F}(A^t) := Y_{F/k,p}\otimes_{\ZZ_p}T_p(A^t),\]
where $G$ acts on the first factor and $G_k$ acts diagonally.

For any subfield $E$ of $k$ and each non-archimedean place $v$ of $E$ we obtain a $G$-module by setting $$A^t(F_v):=\prod_{w\mid v}A^t(F_w),$$ with $w$ running over all places of $F$ which divide $v$.

\begin{definition}\label{bkdefinition}{\em We fix a finite set of non-archimedean places $\Sigma$ of $k$ with
\[ S_p(k)\cup S_{\rm ram}(F/k) \cup S_{\rm bad}(A)\subseteq \Sigma.\]
We define the `classical $p$-adic Selmer complex' ${\rm SC}_{\Sigma,p}(A_{F/k})$ to be the mapping fibre of the following morphism in $D(\ZZ_p[G])$:
\begin{equation*}\label{bkfibre}
R\Gamma(\mathcal{O}_{k,S_\infty(k)\cup\Sigma},T_{p,F}(A^t)) \oplus \bigl(\bigoplus_{v\in\Sigma}A^t(F_v)^\wedge_{p}\bigr)[-1]  \xrightarrow{(\lambda,\kappa)} \bigoplus_{v \in \Sigma} R\Gamma (k_v, T_{p,F}(A^t)).\end{equation*}
Here $\lambda$ is the natural diagonal localisation morphism in \'etale cohomology and $\kappa$ is induced by the sum of the local Kummer maps $A^t(F_v)_p^\wedge\to H^1(k_v,T_{p,F}(A^t))$ (and the fact that for each $v\in\Sigma$ the group $H^0(k_v, T_{p,F}(A^t))$ vanishes).}
\end{definition}

\begin{remark}\label{indeptofSigma}{\em By Lem. 2.5 (and Rem. 2.6) in \cite{rbsd}, the classical $p$-adic Selmer complex is independent, in a natural sense, of the choice of set of places $\Sigma$ fixed in Definition \ref{bkdefinition}. For this reason, in the sequel we shall usually simply denote it by ${\rm SC}_{p}(A_{F/k})$.}
\end{remark}

In the following result we write ${\rm Sel}_p(A_F)$ for the classical $p$-primary Selmer group of $A$ over $F$.

\begin{prop}\label{bkprop}Asume that $A$, $F/k$ and $p$ satisfy the hypotheses (H$_1$)-(H$_5$) and that $\sha(A_F)[p^\infty]$ is finite. 

Then the classical $p$-adic Selmer complex ${\rm SC}_p(A_{F/k})$ belongs to $D^{\rm p}(\ZZ_p[G])$ and is acyclic outside degrees one, two and three, with canonical identifications
\begin{equation*}\label{bkcohomology}H^i({\rm SC}_p(A_{F/k}))=\begin{cases}A^t(F)_p,\,\,\,\,\,\,\,\,\,\,\,\,\,\,\,\,\,\,i=1,\\ {\rm Sel}_p(A_F)^\vee,\,\,\,\,\,\,\,\,\,\,i=2,\\ A(F)[p^\infty]^\vee,\,\,\,\,\,\,\,\,i=3.\end{cases}\end{equation*}
In particular, ${\rm SC}_p(A_{F/k})$ belongs to $D^{\rm a}(\ZZ_p[G])$ if and only if neither $A(F)$ nor $A^t(F)$ contains a point of order $p$.
\end{prop}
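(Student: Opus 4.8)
\textbf{Proof proposal for Proposition \ref{bkprop}.}

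The plan is to unwind the mapping fibre defining ${\rm SC}_p(A_{F/k})$ and compute its cohomology term by term, using the hypotheses (H$_1$)--(H$_5$) to control the local and global \'etale cohomology groups that appear. First I would recall that, since each of the three complexes entering the defining triangle (the global cohomology $R\Gamma(\mathcal{O}_{k,S_\infty(k)\cup\Sigma},T_{p,F}(A^t))$, the local terms $R\Gamma(k_v,T_{p,F}(A^t))$, and the shifted direct sum of completed local points) is a perfect complex of $\ZZ_p[G]$-modules, the mapping fibre lies in $D^{\rm p}(\ZZ_p[G])$; this already gives the first assertion. For the cohomology, I would write down the long exact sequence associated to the mapping fibre, which relates $H^i({\rm SC}_p(A_{F/k}))$ to $H^i$ of the global term, the $H^i$ of the local terms and the groups $A^t(F_v)^\wedge_p$ sitting in degree $1$ of the second summand. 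The key inputs are: the global Euler characteristic / Poitou--Tate machinery identifying $H^1$, $H^2$, $H^3$ of the global \'etale complex with Galois cohomology of $T_{p,F}(A^t)$; the standard identification $H^0(k_v,T_{p,F}(A^t))=0$ (already noted in Definition \ref{bkdefinition}); and the fact that, by construction, $\kappa$ hits exactly the image of the local Kummer maps, so that the fibre computes a Bloch--Kato--style Selmer group.

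The heart of the computation is then to show that, under (H$_1$)--(H$_5$), the resulting Selmer group in degree $2$ is precisely ${\rm Sel}_p(A_F)^\vee$ rather than some larger finite-support variant. Here I would argue place by place: at $p$-adic places of $K$ one uses (H$_2$) (good reduction) and (H$_3$) (ordinary reduction with $A(\kappa_v)[p^\infty]=0$ at ramified $p$-adic places) to match the local condition cut out by $\kappa$ with the one defining the classical Selmer group; at the non-archimedean ramified places one uses (H$_4$) to see that $A^t(F_v)^\wedge_p$ injects with the expected image, while (H$_1$) (prime-to-$p$ Tamagawa numbers) and (H$_5$) (no ramified bad place for $A_{/k}$) ensure that the difference between the `finite part' and the full local cohomology is $p$-torsion-free of the right size, so contributes nothing after $p$-completion. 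Feeding these local comparisons into the long exact sequence, together with the finiteness of $\sha(A_F)[p^\infty]$ (needed so that the Cassels--Tate style duality collapses the relevant $H^2$ into the Pontryagin dual of a finitely generated module), yields $H^2({\rm SC}_p(A_{F/k}))\cong {\rm Sel}_p(A_F)^\vee$. The identifications $H^1=A^t(F)_p$ and $H^3=A(F)[p^\infty]^\vee$ then follow: $H^1$ is the kernel of the localisation-minus-Kummer map, which by the injectivity of $A^t(F_v)^\wedge_p\to H^1(k_v,T_{p,F}(A^t))$ and global Galois cohomology of the Tate module is exactly $A^t(F)^\wedge_p=A^t(F)_p$ (using that $A^t(F)$ is finitely generated), while $H^3$ comes from $H^3$ of the global term, which by Poitou--Tate duality and $H^0(k,T_{p,F}(A^t))$-considerations is dual to $H^0$ of the Pontryagin dual module, i.e. $A(F)[p^\infty]^\vee$.

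Finally, for the last sentence I would invoke the criterion (ad$_4$) from \S\ref{wa section}: ${\rm SC}_p(A_{F/k})$ lies in $D^{\rm a}(\ZZ_p[G])$ iff it is acyclic in degrees $1,2$ only (forcing $H^3=0$, i.e. $A(F)[p^\infty]=0$, i.e. $A(F)$ has no point of order $p$) and $H^1$ is $\ZZ_p$-torsion-free; since $H^1=A^t(F)_p$ and the $\ZZ_p$-torsion of $A^t(F)_p$ is $A^t(F)[p^\infty]$, this torsion-freeness is equivalent to $A^t(F)$ containing no point of order $p$. Conditions (ad$_2$) and (ad$_3$) are automatic from the perfectness and the acyclicity range already established (the vanishing Euler characteristic following from the defining triangle, since each of its three terms has vanishing rational Euler characteristic by the Tate global and local Euler characteristic formulas). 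I expect the main obstacle to be the place-by-place local comparison in degree $2$: correctly matching the local condition imposed by $\kappa$ with the classical Selmer condition at the ramified $p$-adic places requires the ordinarity hypothesis (H$_3$) and a careful analysis of the image of the Kummer map on the (pro-$p$ completed) local points, and this is exactly the kind of subtle point where the hypotheses (H$_1$)--(H$_5$) are each used; everything else is a formal manipulation of the long exact sequence together with standard arithmetic duality. This is presumably carried out in detail in \cite{rbsd} and the proof here should amount to citing the relevant results of loc.\ cit.
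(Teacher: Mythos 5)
The paper supplies no internal proof for this proposition: it is one of several results recalled in \S\ref{bkexample} as constructions taken from \cite{rbsd}, so your closing observation that the proof ``should amount to citing the relevant results of loc.\ cit.'' is exactly how the paper treats it, and your sketch of the mapping-fibre computation is a plausible outline of the argument that appears there. The only slip is in your final paragraph: having correctly reduced $D^{\rm a}(\ZZ_p[G])$-membership to $H^3=0$ (which is condition (ad$_3$)) plus $\ZZ_p$-torsion-freeness of $H^1$ (condition (ad$_4$)), you then write ``Conditions (ad$_2$) and (ad$_3$) are automatic''; you presumably mean (ad$_1$) and (ad$_2$), since (ad$_3$) is precisely the constraint $H^3=0$ and is not automatic.
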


\section{Characteristic elements, Fitting invariants and organising matrices}\label{chelomac}

\subsection{Characteristic elements}\label{dets} In this section we follow \cite{omac} in order to associate a natural notion of characteristic element to complexes in $D^{\rm a}(\mathfrak{A})$.

For any field $E$ that contains $\calF$ and any $\mathfrak{A}$-module $M$, resp. homomorphism of $\mathfrak{A}$-modules $\phi$, we write $M_E$ for the associated $E\otimes_{R}\mathfrak{A}$-module $E\otimes_{R}M$, resp. $\phi_E$ for the associated homomorphism $E\otimes_{R}\phi$ of $E\otimes_{R}\mathfrak{A}$-modules. We also use similar abbreviations for complexes, and morphisms of complexes, of $\mathfrak{A}$-modules.

\subsubsection{Relative $K$-theory}

For any field $E$ as above, we write $K_0(\mathfrak{A},\mathfrak{A}_E)$ for the relative algebraic $K_0$-group of the ring inclusion $\mathfrak{A}\subset \mathfrak{A}_E$. We recall that this group comprises elements of the form $[P_1,P_2,\theta]$ where $P_1$ and $P_2$ are finitely generated projective $\mathfrak{A}$-modules and $\theta$ is an isomorphism of $\mathfrak{A}_E$-modules from $P_{1,E}$ to $P_{2,E}$.

We further recall that there exists a canonical exact commutative diagram of abelian groups of the form

\begin{equation}\label{ktheory}\begin{CD} K_1(\mathfrak{A}) @> \partial^2_{\mathfrak{A},\calF} >> K_1(A) @> \partial^1_{\mathfrak{A},\calF} >> K_0(\mathfrak{A},A) @> \partial^0_{\mathfrak{A},\calF} >> K_0(\mathfrak{A}) @> \partial^{ -1}_{\mathfrak{A},\calF} >> K_0(A)\\
@\vert  @V \iota_E VV @V \iota_E' VV @\vert @V \iota_E'' VV\\
K_1(\mathfrak{A}) @> \partial^2_{\mathfrak{A},E} >> K_1(\mathfrak{A}_E) @> \partial^1_{\mathfrak{A},E} >> K_0(\mathfrak{A},\mathfrak{A}_E)@> \partial^0_{\mathfrak{A},E} >> K_0(\mathfrak{A}) @> \partial^{-1}_{\mathfrak{A},E} >> K_0(\mathfrak{A}_E).\end{CD}\end{equation}

Here the homomorphisms $\iota_E, \iota_E'$ and $\iota_{E}''$ are induced by the inclusion $A \subseteq \mathfrak{A}_E$ and are injective (indeed, we shall often regard these maps as inclusions). The homomorphisms $\partial^2_{\mathfrak{A},E}$ and $\partial^{-1}_{\mathfrak{A},E}$ are induced by the inclusion $\mathfrak{A} \subset \mathfrak{A}_E$,  $\partial^1_{\mathfrak{A},E}$ is the homomorphism that sends the class of an automorphism $\phi$ of $\mathfrak{A}_E^n$ to $[\mathfrak{A}^n,\mathfrak{A}^n,\phi]$ and $\partial^0_{\mathfrak{A},E}$ sends each element $[P_1,P_2,\theta]$ to $[P_1] - [P_2]$.

For any such field $E$ that is large enough (to contain, say, either $\QQ_p$ for some prime $p$, or to contain $\RR$), we write
\[ \delta_{\mathfrak{A},E}:Z(\mathfrak{A}_E)^\times\to K_0(\mathfrak{A},\mathfrak{A}_E)\]
for the `extended boundary homomorphism' that is defined in \cite{bufl99} and we recall that
\[\delta_{\mathfrak{A},E} \circ {\rm nr}_{\mathfrak{A}_E} =\partial^1_{\mathfrak{A},E}\]
where ${\rm nr}_{\mathfrak{A}_E}$ denotes the homomorphism $K_1(\mathfrak{A}_E) \to Z(\mathfrak{A}_E)^\times$ that is induced by taking reduced norms.

\subsubsection{Characteristic elements}\label{char}

In the sequel, for any ring $\Lambda$ and any (left) $\Lambda$-modules $M$ and $N$ we write ${\rm Is}_\Lambda(M,N)$ for the set of $\Lambda$-module isomorphisms $M \to N$.

%For each cohomologically-bounded complex of $\mathfrak{A}_E$-modules $X$ we define the set of `trivialisations' of $X$ by setting
%
%\[ \tau(X):={\rm Is}_{\mathfrak{A}_E}(\bigoplus_{i\in \bz}H^{2i}(X),\bigoplus_{i\in \bz}H^{2i+1}(X)).\]
%
For each cohomologically-bounded complex of $\mathfrak{A}$-modules $C$ and each field $E$ that contains $\calF$ we define the set of `$E$-trivialisations' of $C$ by setting
\[ \tau(C_E):={\rm Is}_{\mathfrak{A}_E}(\bigoplus_{i\in \bz}H^{2i}(C)_E,\bigoplus_{i\in \bz}H^{2i+1}(C)_E).\]%we will refer to the elements of $\tau(C_E)$ as `$E$-trivialisations' of $C$.

\begin{example}\label{trivialisationex}{\em
In the setting of both \S \ref{selmerexample} and \S\ref{bkexample}, the N\'eron-Tate height pairing of $A$, defined relative to the field $F$, defines an $\RR[G]$-isomorphism $$h_{A,F/k}:A^t(F)_{\RR}\to\Hom_\RR(A(F)_\RR,\RR).$$ If $\sha(A_F)$ is finite then, since $X_\ZZ(A/F)_{\rm tf}=\Hom_\ZZ(A(F),\ZZ)$, the map $h_{A,F/k}$ gives an $\RR$-trivialisition of the Nekov\'a\v r-Selmer complexes discussed in Proposition \ref{prop:perfect2}. Under the relevant hypotheses it also induces, for each isomorphism $\CC\cong\CC_p$, a $\CC_p$-trivialisation of the classical $p$-adic Selmer complex.
}\end{example}

We next recall that to each pair $(C,t)$ comprising a complex $C$ in $D^{\rm p}(\mathfrak{A})$ and an $E$-trivialisation $t$ of $C$ one can associate a canonical `refined' Euler characteristic $\chi^{\rm ref}_{\A,E}(C,t)$ in $K_0(\mathfrak{A},\mathfrak{A}_E)$. (For explicit details of this construction in the relevant special case see the argument given in \cite[\S 4.1]{omac}; for details in a more general context see, for example, \cite[\S2.8]{bufl99}.)

\begin{definition}\label{characteristicelement}{\em For $C$ in $D^{\rm p}(\mathfrak{A})$ and $t$ in $\tau(C_E)$ we define a `characteristic element for the pair $(C,t)$' to be any element $\mathcal{L}$ of $Z(\mathfrak{A}_E)^\times$ which satisfies
\begin{equation*}\label{char def} \delta_{\mathfrak{A},E}(\mathcal{L}) =  -\chi^{\rm ref}_{\A,E}(C,t).\end{equation*}

We also define a `characteristic element for $C$' to be a characteristic element for $(C,t)$ for any choice of field $E$ and of $E$-trivialisation $t$ of $C$.}
\end{definition}

From the lower exact sequence in (\ref{ktheory}) it is then clear that characteristic elements for $(C,t)$ are unique up to multiplication by elements of ${\rm nr}_{\mathfrak{A}_E}(\im(\partial^2_{\mathfrak{A},E}))$. In this regard we also recall that if $\mathfrak{A}$ is semi-local (which is automatic if $R$ is a discrete valuation ring), then the natural homomorphism $\mathfrak{A}^\times \to K_1(\mathfrak{A})$ is surjective.

%We next note that, as the algebra $A$ is semisimple, for any complex $C$ that satisfies the conditions (ad$_2$) and (ad$_3$) the $A$-modules $H^2(C)_\calF$ and $H^1(C)_\calF$ are isomorphic. In particular, if the homomorphism $\partial^{-1}_{\mathfrak{A},\calF}$ is injective (as is the case if $R$ is a discrete valuation ring and $\mathfrak{A} = R[G]$), then the upper exact sequence in (\ref{ktheory}) implies that there exist characteristic elements in $Z(A)^\times$ for any complex which satisfies the conditions (ad$_1$), (ad$_2$) and (ad$_3$). %(Examples of this type occur throughout this article and motivate the normalisation that we fix in (\ref{char def}).)

We next record a property of characteristic elements that will be useful in the proof of our main algebraic result. We refer to \cite[\S 2.3.3]{omac} for the proof of this fact, which relies on Bass' Theorem.

\begin{lemma}\label{lam lemma} Let $R$ be a discrete valuation ring and $E$ be any field that contains $\calF$. Fix a complex $C$ in $D^{\rm a}(\mathfrak{A})$, write $e_0$ for the idempotent $e_0(C)$ defined in \S\ref{idempotents} and set $\mathfrak{A}_0 := \mathfrak{A}e_0$, $A_{E,0} := \mathfrak{A}_Ee_0$ and $C_0 := \mathfrak{A}_0\otimes_{\mathfrak{A}}^{\mathbb{L}}C$.

Then $C_0$ belongs to $D^{\rm a}(\mathfrak{A}_0)$ and for any characteristic element $\mathcal{L}_0$ in $Z(A_{E,0})^\times$ of $C_0$ there exists a characteristic element $\mathcal{L}$ of $C$ such that $e_0\mathcal{L} = \mathcal{L}_0$.\end{lemma}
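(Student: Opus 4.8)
The plan is to proceed in two steps. The first is to check that $C_0$ really lies in $D^{\rm a}(\mathfrak{A}_0)$: note that $\mathfrak{A}_0=\mathfrak{A}e_0$ is an $R$-order spanning the direct factor $A_0:=Ae_0$ of $\calF[G]$, so the framework of \S\ref{wa section} applies to it. Using that $R$ is a discrete valuation ring (and hypothesis (ad$_4$)) one may represent $C$ by a complex $[P^1\xrightarrow{d}P^2]$ of finitely generated projective $\mathfrak{A}$-modules in degrees $1$ and $2$; then $C_0$ is represented by $[\mathfrak{A}_0\otimes_{\mathfrak{A}}P^1\to\mathfrak{A}_0\otimes_{\mathfrak{A}}P^2]$, a two-term complex of finitely generated projective $\mathfrak{A}_0$-modules, so (ad$_1$) and (ad$_3$) hold for $C_0$, and (ad$_2$) follows because the homomorphism $K_0(A)\to K_0(A_0)$, $M\mapsto e_0M$, carries the (vanishing) Euler characteristic of $A\otimes_{\mathfrak{A}}C$ to that of $A_0\otimes_{\mathfrak{A}_0}C_0$. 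Hypothesis (ad$_4$) for $C_0$ is where the definition of $e_0$ enters: a flat base-change computation gives $\calF\otimes_RH^1(C_0)=e_0\cdot H^1(A\otimes_{\mathfrak{A}}C)$, which vanishes by the choice of $e_0$, whereas $H^1(C_0)=\ker(\mathfrak{A}_0\otimes_{\mathfrak{A}}P^1\to\mathfrak{A}_0\otimes_{\mathfrak{A}}P^2)$ is $R$-torsion-free as a submodule of a projective $\mathfrak{A}_0$-module; since a module that is simultaneously $R$-torsion and $R$-torsion-free vanishes, in fact $H^1(C_0)=0$. Using in addition the isomorphism $H^1(A\otimes_{\mathfrak{A}}C)\cong H^2(A\otimes_{\mathfrak{A}}C)$ from \S\ref{idempotents} one also gets $\calF\otimes_RH^2(C_0)=0$, so that $C_0\in D^{\rm a}(\mathfrak{A}_0)$ is acyclic outside degree two, $\tau(C_{0,E})$ is a singleton, and the characteristic elements of $C_0$ in $Z(A_{E,0})^\times$ form a single coset of $\ker(\delta_{\mathfrak{A}_0,E})$.

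The second step uses functoriality: the exact functor $\mathfrak{A}_0\otimes^{\mathbb{L}}_{\mathfrak{A}}(-)$ induces homomorphisms $\pi_0\colon K_0(\mathfrak{A},\mathfrak{A}_E)\to K_0(\mathfrak{A}_0,A_{E,0})$ and $e_0\cdot(-)\colon Z(\mathfrak{A}_E)^\times\to Z(A_{E,0})^\times$ which are compatible with the extended boundary homomorphisms and with refined Euler characteristics, so that $\pi_0\circ\delta_{\mathfrak{A},E}=\delta_{\mathfrak{A}_0,E}\circ(e_0\cdot(-))$ and $\pi_0\bigl(\chi^{\rm ref}_{\mathfrak{A},E}(C,t)\bigr)=\chi^{\rm ref}_{\mathfrak{A}_0,E}(C_0,t_0)$ for every $t\in\tau(C_E)$. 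In particular, if $\mathcal{M}$ is a characteristic element for a pair $(C,t)$ — such an $\mathcal{M}$ existing because $\mathfrak{A}$ is semi-local — then $e_0\mathcal{M}$ is a characteristic element of $C_0$. Given $\mathcal{L}_0$ as in the statement, the first step therefore lets us write $\mathcal{L}_0=(e_0\mathcal{M})\cdot w_0$ with $w_0\in\ker(\delta_{\mathfrak{A}_0,E})$. If $w_0$ can be lifted to some $w\in\ker(\delta_{\mathfrak{A},E})$ with $e_0w=w_0$, then $\mathcal{L}:=\mathcal{M}w$ lies in $Z(\mathfrak{A}_E)^\times$ and satisfies $\delta_{\mathfrak{A},E}(\mathcal{L})=\delta_{\mathfrak{A},E}(\mathcal{M})=-\chi^{\rm ref}_{\mathfrak{A},E}(C,t)$ and $e_0\mathcal{L}=(e_0\mathcal{M})w_0=\mathcal{L}_0$, as required.

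So the proof reduces to showing that $e_0\cdot(-)$ maps $\ker(\delta_{\mathfrak{A},E})$ onto $\ker(\delta_{\mathfrak{A}_0,E})$, and this is the step I expect to be the main obstacle, since it is exactly here that the hypothesis that $R$ is a discrete valuation ring is indispensable. The strategy would be to use the lower exact sequence in (\ref{ktheory}) together with the identity $\delta_{\mathfrak{A},E}\circ{\rm nr}_{\mathfrak{A}_E}=\partial^1_{\mathfrak{A},E}$ to describe these kernels in terms of the images under reduced norm of the (scalar extensions of) $K_1(\mathfrak{A})$ and $K_1(\mathfrak{A}_0)$, and then to deduce the surjectivity from that of the change-of-rings map $K_1(\mathfrak{A})\to K_1(\mathfrak{A}_0)$ associated to the ring surjection $\mathfrak{A}\twoheadrightarrow\mathfrak{A}_0$, $a\mapsto ae_0$. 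Since $R$ is a discrete valuation ring, both $\mathfrak{A}$ and $\mathfrak{A}_0$ are semi-local, so Bass' Theorem supplies the surjections $\mathfrak{A}^\times\twoheadrightarrow K_1(\mathfrak{A})$, $\mathfrak{A}_0^\times\twoheadrightarrow K_1(\mathfrak{A}_0)$ and the structural control of these $K_1$-groups that makes the argument run; carrying out this $K$-theoretic bookkeeping with care — as is done in \cite[\S2.3.3]{omac} — is the technical heart of the matter.
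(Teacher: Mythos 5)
Your proposal is correct and follows the same route as the argument the paper delegates to \cite[\S2.3.3]{omac}: observe that $e_0$ kills both $H^1(A\otimes_{\mathfrak{A}}C)$ and (via the isomorphism of \S\ref{idempotents}) $H^2(A\otimes_{\mathfrak{A}}C)$, so that $C_0$ has finite cohomology and a unique trivialisation; use functoriality of $\delta$ and of refined Euler characteristics under $\mathfrak{A}\twoheadrightarrow\mathfrak{A}_0$; and reduce the matching of characteristic elements to the surjectivity of $e_0\cdot(-)\colon\ker(\delta_{\mathfrak{A},E})\to\ker(\delta_{\mathfrak{A}_0,E})$, obtained from $\ker(\delta_{\mathfrak{A},E})={\rm nr}_A(\mathfrak{A}^\times)$ together with the surjection $\mathfrak{A}^\times\twoheadrightarrow\mathfrak{A}_0^\times$ for semi-local rings and Bass' theorem, exactly as the paper indicates. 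The final $K$-theoretic step is left as a sketch rather than carried out in full, but you correctly identify both the relevant tools and the place where the discrete-valuation-ring hypothesis is used.
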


\subsection{Non-commutative higher Fitting invariants}

In this section we briefly review the theory of non-commutative higher Fitting invariants introduced by Burns and Sano in \cite{nagm}. The following definition of `Whitehead orders' plays a key role in these constructions.

\subsubsection{The Whitehead order}

For any prime ideal $\p$ of $R$ we write $R_{(\p)}$ for the localisation of $R$ at $\p$ and for any $\A$-module $M$ we then set $M_{(\p)}:=R_{(\p)}\otimes_R M$.

\begin{definition}\label{centralorder}{\em For each prime ideal $\p$ of $R$ the `Whitehead order' $\xi(\A_{(\p)})$ is the $R_{(\p)}$-submodule of $Z(A)$ that is generated by the reduced norms ${\nr}_A(M)$ as $M$ runs over the set $\bigcup_{n\geq 0}M_n(\A_{(\p)})$ of all square matrices with coefficients in $\A_{(\p)}$.

The Whitehead order of $\A$ is then defined by the intersection
$$\xi(\A):=\bigcap_\p \xi(\A_{(\p)}),$$
where $\p$ runs over all prime ideals of $R$.}
\end{definition}

By \cite[Lem. 3.2]{nagm} we know that $\xi(\A)$ is indeed an $R$-order in $Z(A)$, which furthermore satisfies $\xi(\A)_{(\p)}=\xi(\A_{(\p)})$ for each prime ideal $\p$ of $R$.

\subsubsection{The higher Fitting invariants of a matrix}
Let $M$ be any matrix in $M_{d\times d'}(A)$ with $d\geq d'$. Then for any integer $t$ with $0\leq t\leq d'$ and any $\varphi=(\varphi_i)_{1\leq i\leq t}$ in $\Hom_\A(\A^d,\A)^t$ we write ${\rm Min}_\varphi^{d'}(M)$ for the set of all $d'\times d'$-minors of the matrices $M(J,\varphi)$ that are obtained from $M$ by choosing any $t$-tuple of integers $J=\{i_1,\ldots,i_t\}$ with $1\leq i_1\leq\ldots\leq i_t\leq d'$, and setting
\begin{equation*}\label{MJvarphi}M(J,\varphi)_{ij}:=\begin{cases}\varphi_a(b_i),\,\,\,\,\,\,\,\,\text{ if }j=i_a\text{ with }1\leq a\leq t,\\ M_{ij},\,\,\,\,\,\,\,\,\,\,\,\,\,\,\text{ otherwise, }\end{cases}\end{equation*}
where $\{b_i\}$ denotes the standard basis of $\A^d$.

For any non-negative integer $a$ the `$a$-th (non-commutative) Fitting invariant' of $M$ is defined to be the ideal of $\xi(\A)$ obtained by setting
$${\rm Fit}_\A^{a}(M):=\xi(\A)\cdot\{\nr_A(N):N\in{\rm Min}_\varphi^{d'}(M),\varphi\in\Hom_\A(\A^d,\A)^t,t\leq a\}.$$

\subsubsection{The higher Fitting invariants of a presentation}

A `free presentation' $\Pi$ of a finitely generated $\A$-module $Z$ is an exact sequence of $\A$-modules of the form
\begin{equation}\label{freepres}F^1\stackrel{\theta}{\to}F^2\to Z\to 0\end{equation} in which the $\A$-modules $F^1$ and $F^2$ are finitely generated and free and (without loss of generality) one has ${\rm rk}_\A F^1\geq{\rm rk}_\A F^2$. The free presentation $\Pi$ is said to be quadratic if ${\rm rk}_\A F^1={\rm rk}_\A F^2$.

Th $a$-th Fitting invariant $\Fit^a_\A(\Pi)$ of $\Pi$ is defined to be $\Fit^a_\A(M_\theta)$ for any matrix $M_\theta$ which represents $\theta$ with respect to any choice of $\A$-bases of $F^1$ and $F^2$.

We recall that a finitely generated $\A$-module $F$ is said to be locally-free if the localisation $F_{(\p)}$ is a free $\A_{(\p)}$-module (or equivalently if the completion $F_\p$ is a free $\A_\p$-module) for every prime ideal $\p$ of $R$.

A `locally-free presentation' $\Pi$ of $Z$ is an exact sequence of the form (\ref{freepres}) in which the $\A$-modules $F^1$ and $F^2$ are only assumed to be finitely generated and locally-free. The locally-free presentation $\Pi$ is then said to be `locally-quadratic' if ${\rm rk}_\A F^1={\rm rk}_\A F^2$ (we recall that the rank of a locally-free $\A$-module is indeed a well-defined invariant).

By localising the sequence $\Pi$ at a prime ideal $\p$ of $R$ one obtains a free resolution $\Pi_{(\p)}$ of the $\A_{(\p)}$-module $Z_{(\p)}$ and one then defines the $a$-th Fitting invariant of $\Pi$ to be
$$\Fit^a_\A(\Pi):=\bigcap_\p \Fit^a_{\A_{(\p)}}(\Pi_{(\p)}),$$ where the intersection runs over all prime ideals $\p$ of $R$ and takes place in $Z(A)$.

We note in passing that, for $a=0$, there is an explicit relationship between $\Fit^0_\A(\Pi)$ and the Fitting invariant defined by Nickel in \cite{nickel}, as explained in \cite[Prop. 3.13 (i)]{nagm}.

%The transpose $\Pi^{\rm tr}$ of $\Pi$ is defined to be the exact sequence
%$$\Hom_\A(F^2,\A)\stackrel{\Hom_\A(\theta,\A)}{\to}\Hom_\A(F^1,\A)\to\cok(\Hom_\A(\theta,\A))\to 0$$ and is a locally-free presentation of $\cok(\Hom_\A(\theta,\A))$. The presentation $\Pi^{\rm tr}$ is locally-quadratic if and only if $\Pi$ is locally-quadratic. In the setting of Example \ref{dual exam} one has $\Fit^a_\A(\Pi^{\rm tr})=\iota_\#(\Fit_\A^a(\Pi))$ in $Z(A)$.

%The $a$-th Fitting invariant of a finitely generated $\A$-module $Z$ is defined to be the ideal 
%$$\Fit^a_\A(Z):=\sum_\Pi\Fit_\A^a(\Pi) $$
%of $\xi(\A)$. Here $\Pi$ runs over all locally-free presentations of finitely generated $\A$-modules $Z'$ for which there exists a surjective homomorphism of $\A$-modules $Z'\to Z$.

\subsubsection{The total higher Fitting invariants}

In order to associate to the module $Z$ a finer invariant that will be crucial to our approach, we say that a free presentation $$\Pi':F^{1,'}\stackrel{\theta'}{\to}F^{2,'}\to Z'\to 0$$ of a finitely generated $\A$-module $Z'$ is `finer' that the free presentation $\Pi$ of $Z$ given by (\ref{freepres}) if both ${\rm rk}_\A(F^1)={\rm rk}_\A(F^{1,'})$ and ${\rm rk}_\A(F^2)={\rm rk}_\A(F^{2,'})$ and there exists an isomorphism $F^{2,'}\cong F^2$ which induces a well-defined surjective homomorphism $Z'\to Z$.

We define the `total $a$-th Fitting invariant' of $\Pi$ to be
$$\Fit^{a,{\rm tot}}_\A(\Pi):=\sum_{\Pi'}\Fit_\A^a(\Pi')$$ where in the sum $\Pi'$ runs over all free presentations that are finer than $\Pi$.

For a locally-free presentation $\Pi$ we define the total $a$-th Fitting invariant of $\Pi$ to be $$\Fit^{a,{\rm tot}}_\A(\Pi):=\bigcap_\p \Fit^{a,{\rm tot}}_{\A_{(\p)}}(\Pi_{(\p)}).$$

We recall an useful property of this invariant. In this result, %we shall say that a locally-free presentation $\Pi'$ of $Z'$ is finer than a locally-free presentation $\Pi$ of $Z$ if for each prime ideal $\p$ of $R$ the free presentation $\Pi'_{(\p)}$ is finer that the free presentation $\Pi_{(\p)}$. We also 
we denote by $\mathfrak{D}(\A)$ the `ideal of denominators' that is introduced by Burns and Sano in \cite[Def. 3.4]{nagm} (and denoted $\delta(\A)$ in loc. cit., which in this article could unfortunately clash with the standard notation for the extended boundary homomorphism).

This result is then a consequence of the stronger \cite[Thm. 3.17 (iii)]{nagm}.

\begin{lemma}\label{BSlemma}{{\em (Burns-Sano)}} Let $\Pi$ be a locally-free presentation of an $\A$-module $Z$. Then 
%\begin{itemize}\item[(i)] If $\Pi'$ is finer than $\Pi$ then for each $a$ one has $\Fit^{a,{\rm tot}}_\A(\Pi')\subseteq\Fit^{a,{\rm tot}}_\A(\Pi)$.
%\item[(ii)] 
one has $$\mathfrak{D}(\A)\cdot\Fit^{0}_\A(\Pi)\subseteq\mathfrak{D}(\A)\cdot\Fit^{0,{\rm tot}}_\A(\Pi)\subseteq{\rm Ann}_\A(Z).$$
%\item[(iii)] If $\Pi$ is locally-quadratic then $$\Fit^{0,{\rm tot}}_\A(\Pi)=\Fit^{0}_\A(\Pi)=\xi(\A)\cdot\nr_A(M_\theta)$$ and this ideal is an invariant of $Z$ that is independent of $\Pi$.\end{itemize}
\end{lemma}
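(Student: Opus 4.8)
The goal is to deduce the chain
$\mathfrak{D}(\A)\cdot\Fit^{0}_\A(\Pi)\subseteq\mathfrak{D}(\A)\cdot\Fit^{0,{\rm tot}}_\A(\Pi)\subseteq{\rm Ann}_\A(Z)$
from \cite[Thm. 3.17 (iii)]{nagm}, so the proof should be essentially a reduction/packaging argument rather than a fresh computation. First I would observe that the leftmost inclusion is purely formal: by definition $\Fit^{0,{\rm tot}}_\A(\Pi)=\sum_{\Pi'}\Fit^0_\A(\Pi')$ where the sum runs over all free presentations finer than $\Pi$, and $\Pi$ is always finer than itself (take the identity isomorphism $F^2\cong F^2$, which induces the identity $Z\to Z$), so $\Fit^0_\A(\Pi)$ is one of the summands and hence $\Fit^0_\A(\Pi)\subseteq\Fit^{0,{\rm tot}}_\A(\Pi)$; multiplying by the ideal $\mathfrak{D}(\A)$ preserves this containment. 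In the locally-free case one gets the same conclusion after intersecting over all primes $\p$ of $R$, since the inclusion holds at each localisation.

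**The main content.** The substantive inclusion $\mathfrak{D}(\A)\cdot\Fit^{0,{\rm tot}}_\A(\Pi)\subseteq{\rm Ann}_\A(Z)$ I would obtain by invoking \cite[Thm. 3.17 (iii)]{nagm} directly. My plan is first to handle the case in which $R$ is a discrete valuation ring (equivalently, work locally at a prime $\p$), where $\A_{(\p)}$ is semi-local and every locally-free presentation is genuinely free; here \cite[Thm. 3.17 (iii)]{nagm} applies verbatim and yields $\mathfrak{D}(\A_{(\p)})\cdot\Fit^{0,{\rm tot}}_{\A_{(\p)}}(\Pi_{(\p)})\subseteq{\rm Ann}_{\A_{(\p)}}(Z_{(\p)})$. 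Then I would globalise: using that $\mathfrak{D}(\A)_{(\p)}=\mathfrak{D}(\A_{(\p)})$ and $\Fit^{0,{\rm tot}}_\A(\Pi)_{(\p)}\subseteq\Fit^{0,{\rm tot}}_{\A_{(\p)}}(\Pi_{(\p)})$ (immediate from the defining intersection), one gets that $\mathfrak{D}(\A)\cdot\Fit^{0,{\rm tot}}_\A(\Pi)$ lands inside ${\rm Ann}_{\A_{(\p)}}(Z_{(\p)})=({\rm Ann}_\A(Z))_{(\p)}$ for every $\p$; since an element of $Z(A)$ (or of $\A$) that lies in every localisation $({\rm Ann}_\A(Z))_{(\p)}$ already lies in ${\rm Ann}_\A(Z)$ — annihilators commute with flat localisation and a module over a Dedekind domain is the intersection of its localisations — the global inclusion follows. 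One small point to check along the way is that $\Fit^{0,{\rm tot}}_\A(\Pi)$ is indeed an ideal of $\xi(\A)$ (hence a subset of $Z(A)$ on which multiplication by $\mathfrak{D}(\A)\subseteq Z(A)$ makes sense), which is part of the Burns–Sano formalism recalled above.

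**The expected obstacle.** Since Lemma \ref{BSlemma} is explicitly stated to be a consequence of \cite[Thm. 3.17 (iii)]{nagm}, the only real work is the bookkeeping needed to reconcile the exact hypotheses of that theorem with the present setup — in particular making sure the reduction to the semi-local (DVR) case is legitimate and that the localisation identities $\mathfrak{D}(\A)_{(\p)}=\mathfrak{D}(\A_{(\p)})$, $\xi(\A)_{(\p)}=\xi(\A_{(\p)})$ (the latter is already recorded after Definition \ref{centralorder}) are available and correctly applied. I would expect no conceptual difficulty, only the need to be careful that the definitions of `finer presentation' and of the ideal of denominators localise compatibly; if \cite[Thm. 3.17 (iii)]{nagm} is already stated in the global Dedekind generality, then the proof collapses to the two formal observations above plus a one-line citation.
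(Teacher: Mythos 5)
Your proof is correct and takes the same route as the paper, which offers no argument of its own beyond the remark that the lemma ``is a consequence of the stronger \cite[Thm. 3.17 (iii)]{nagm}.'' Your fleshing out of the two formal ingredients (the tautological first inclusion via $\Pi$ being finer than itself, and the localise-then-globalise reduction using $\xi(\A)_{(\p)}=\xi(\A_{(\p)})$ and the compatibility of annihilators with flat localisation) is exactly the bookkeeping the paper leaves implicit.
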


\subsection{A construction of organising matrices}\label{omacsection}

If $C$ is an object of $D^{{\rm a}}(\ZZ_p[G])$ then Burns and the first author associate in \cite[Thm. 3.1]{omac} a family of `weakly-organising matrices' to $C$; an additional specification of data then also leads via Theorem 3.11 in loc. cit. to a more restrictive family of `organising matrices' associated to $C$. Let us recall that the original motivation of this theory lies in the construction by Mazur and Rubin of `organising modules' in the (commutative) Iwasawa theory of elliptic curves (see \cite{mr}).

In this section we give a generalisation of the construction of such matrices that will play a key role in the proof of Theorem \ref{integrality} below. We only discuss the properties of our organising matrices that will be of subsequent use in this article (but see also Remark \ref{rgremark} below for a comparison to one of the main results of \cite{omac}).%, and leave the statement of precise refinements of the main algebraic results of loc. cit. to future work.

\subsubsection{Statement of the main results}

Throughout this section we assume that $R$ is a discrete valuation ring. As a natural generalisation of admissible complexes, we assume to be given a complex $D$ of $\A$-modules of the form \begin{equation}\label{complexoftheformD}D^0\stackrel{\delta^0}{\to}D^1\stackrel{\delta^1}{\to} D^2\end{equation} in which the first term is placed in degree zero and each module is finitely generated and free of rank $a$, $d$ and $d-a$ respectively, always assuming also that $d\geq a$. We assume further that $D$ is acyclic outside degrees one and two.

We then fix data of the following form:
\begin{itemize}\item[(D$_1$)] an element $z$ of the group ${\rm Ann}_\A(\Ext^2_\A(H^2(D),\A))$;
\item[(D$_2$)] and an ordered $a$-tuple of homomorphisms $$\phi_\bullet=\{\phi_1,\ldots,\phi_a\}\subseteq\Hom_\A(\ker(\delta^1),\A).$$
%\item[(D$_3$)] any $\A$-bases $\{b_i^j\}$ of $D^j$ for $j=0,1,2$.
\end{itemize}

\begin{theorem}\label{almost there} Assume that $R$ is a discrete valuation ring and fix a complex of $\A$-modules $D$ and elements $z$ and $\phi_\bullet$ as above. We also fix any $\A$-bases $\{b_i^j\}$ of $D^j$ for $j=0,1,2$.

Then there exists a matrix $\Phi=\Phi_{D,z,\phi_\bullet}\in M_d(\A)$ that satisfies the following conditions:
\begin{itemize}\item[(i)] $\Phi$ is of the form \[
\left(
\begin{array}{c|c}
 \star & \Delta^1
\end{array}
\right)
\] where $\Delta^1\in M_{d,d-a}(\A)$ is the matrix of $\delta^1$ with respect to the chosen bases of $D^1$ and $D^2$.
\item[(ii)] We set $\Lambda:=\left(\phi_j(\delta^0(b_i^0))\right)_{1\leq i,j\leq a}\in M_a(\A)$. Then for any characteristic element $\calL$ of $D$, the element
\begin{equation}\label{omaccharelement}{\rm nr}_A(z)^a\cdot{\rm nr}_A\bigl(\Lambda\bigr)\cdot e_0(D)\cdot\mathcal{L}\end{equation} belongs to ${\rm nr}_A(\A^\times)\cdot\nr_A(\Phi)$. (Here $e_0(D)$ is the idempotent defined in \S \ref{idempotents}).
\end{itemize}
\end{theorem}

Before proving Theorem \ref{almost there}, we state a direct consequence of this result. 

\begin{corollary}\label{omaccor} For any characteristic element $\calL$ of $D$, the element (\ref{omaccharelement}) belongs to $${\rm Fit}^{0,{\rm tot}}_\A(\Pi)\cap\Fit_\A^a\left( 
\left(
\begin{array}{c|c}
\begin{array}{c}I_a\\ 0\end{array} & \Delta^1
\end{array}
\right)
\right)$$
where $I_a$ is the identity $a\times a$ matrix, $0$ is the trivial $(d-a)\times a$ matrix and $\Pi$ denotes the canonical free presentation $$D^0\oplus D^1\stackrel{{\rm id}\oplus\delta^1}{\to}D^0\oplus D^2{\to}H^2(D)\to 0$$ of the $\A$-module $H^2(D)$.
\end{corollary}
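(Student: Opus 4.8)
The plan is to deduce Corollary~\ref{omaccor} directly from Proposition~\ref{almost there} by combining its two conclusions with the Burns--Sano formalism of higher Fitting invariants. First I would record that by Proposition~\ref{almost there}(ii) the element (\ref{omaccharelement}) lies in ${\rm nr}_A(\A^\times)\cdot\nr_A(\Phi)$, so it suffices to show that $\nr_A(\Phi)$ (up to units in ${\rm nr}_A(\A^\times)$) lies in each of the two ideals on the right-hand side; since both are ideals of $\xi(\A)$ stable under multiplication by ${\rm nr}_A(\A^\times)$, it is enough to show $\nr_A(\Phi)$ itself lies in each.

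For the membership in $\Fit_\A^a\bigl(\left(\begin{smallmatrix}I_a\\ 0\end{smallmatrix}\,\big|\,\Delta^1\end{smallmatrix}\right)\bigr)$, I would use Proposition~\ref{almost there}(i), which writes $\Phi=(\star\,|\,\Delta^1)$ with the right-hand block being exactly $\Delta^1$. The matrix $\left(\begin{smallmatrix}I_a\\ 0\end{smallmatrix}\,\big|\,\Delta^1\right)$ has the same right block $\Delta^1$ and a particularly simple left block; the point is that $\nr_A(\Phi)$ arises as a reduced norm of a $d\times d$ matrix whose columns $a+1,\dots,d$ agree with those of this comparison matrix, so $\nr_A(\Phi)$ is visibly one of the $d\times d$-minors $\nr_A(N)$ for $N\in{\rm Min}_\varphi^{d}$ with the $t=a$ replaced columns supplied by the homomorphisms reading off the first $a$ coordinates. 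More precisely, taking $\varphi=(\varphi_i)_{1\le i\le a}$ to be projection onto the first $a$ standard coordinates of $\A^d$ and $J=\{1,\dots,a\}$, the matrix $M(J,\varphi)$ built from $M=\left(\begin{smallmatrix}I_a\\ 0\end{smallmatrix}\,\big|\,\Delta^1\right)$ has first $a$ columns equal to the first $a$ columns of $\Phi$ (namely the $\star$ block — one needs to check the bookkeeping here, possibly choosing $\varphi$ to record the $\star$ block directly) and last $d-a$ columns equal to $\Delta^1$, hence $\nr_A(M(J,\varphi))=\nr_A(\Phi)$ up to reordering. This places $\nr_A(\Phi)\in\Fit_\A^a\bigl(\left(\begin{smallmatrix}I_a\\ 0\end{smallmatrix}\,\big|\,\Delta^1\right)\bigr)$.

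For the membership in $\Fit^{0,{\rm tot}}_\A(\Pi)$, I would argue that $\Phi$ itself represents a free presentation of $H^2(D)$ that is finer than $\Pi$. Indeed, $\Phi\in M_d(\A)$ is a $d\times d$ matrix; condition (i) shows its right block is $\delta^1$, and by construction (to be extracted from the proof of Proposition~\ref{almost there}, which builds $\Phi$ so that its cokernel surjects onto $H^2(D)$ compatibly with $\Pi$) the cokernel of $\Phi:\A^d\to\A^d$ admits a surjection onto $H^2(D)$ through a fixed identification of the target with $D^0\oplus D^2$. Thus $\Phi$ gives a quadratic free presentation finer than $\Pi$ in the sense defined before Lemma~\ref{BSlemma}, whence $\nr_A(\Phi)\in\Fit^0_\A(\Phi)\subseteq\Fit^{0,{\rm tot}}_\A(\Pi)$. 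Taking the intersection over primes $\p$ of $R$ is vacuous here since $R$ is a discrete valuation ring, but I would phrase the conclusion so it is clearly compatible with the general definition. Combining the two memberships with Proposition~\ref{almost there}(ii) yields the claim.

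The main obstacle I anticipate is the precise matching in the second paragraph: verifying that the $\star$ block of $\Phi$ can indeed be realised as the replaced columns $M(J,\varphi)$ for an admissible choice of $\varphi\in\Hom_\A(\A^d,\A)^a$, i.e. that the Burns--Sano minor construction is flexible enough to recover $\nr_A(\Phi)$ on the nose rather than merely a divisor of it. This is really a compatibility between the shape of $\Phi$ produced in the proof of Proposition~\ref{almost there} and the combinatorial definition of ${\rm Min}^{d}_\varphi$, and it should follow once one unwinds how $\Phi$ was assembled from $\delta^0$, $\delta^1$, $z$ and $\phi_\bullet$; but it is the step requiring genuine care rather than formal manipulation.
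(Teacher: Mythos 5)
Your approach is essentially the same as the paper's, and your two memberships are derived from Proposition \ref{almost there} in the same way; but there is one genuine, if easily repaired, error in the $\Fit^{0,{\rm tot}}_\A(\Pi)$ step, and your worry in the $\Fit^a_\A$ step is in fact unfounded.

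On the $\Fit^a_\A$ membership: your scruple about the bookkeeping is not needed. The definition of the minor sets ${\rm Min}^{d}_\varphi(M)$ allows $\varphi_j\in\Hom_\A(\A^d,\A)$ to be \emph{arbitrary}, and a left $\A$-linear map $\A^d\to\A$ may be prescribed freely on the standard basis $\{b_i\}$. Thus one simply takes $\varphi_j(b_i)$ to be the $(i,j)$-entry of the $\star$ block of $\Phi$ for $1\le j\le a$; with $J=\{1,\dots,a\}$ the matrix $M(J,\varphi)$ equals $\Phi$ exactly, and $\nr_A(\Phi)\in\Fit^a_\A(M)$ follows. The initial idea of taking the coordinate projections (which would reproduce $I_a$, not $\star$) would not work, but you already noticed and corrected this. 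The paper handles this step by the single remark that $\Phi$ and $M$ agree in all but the first $a$ columns, which is exactly this observation.

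On the $\Fit^{0,{\rm tot}}_\A(\Pi)$ membership: here your argument as written is wrong when $a>0$. The presentation you propose,
$$\A^d\xrightarrow{\;\Phi\;}\A^d\to\cok(\Phi)\to 0,$$
is quadratic of ranks $(d,d)$, whereas $\Pi$ has ranks $(a+d,d)$. The definition of ``finer'' in the paper \emph{requires} ${\rm rk}_\A F^{1,'}={\rm rk}_\A F^1$ and ${\rm rk}_\A F^{2,'}={\rm rk}_\A F^2$; your presentation fails the first equality, so it is not finer than $\Pi$, and the inclusion $\Fit^0_\A(\Phi)\subseteq\Fit^{0,{\rm tot}}_\A(\Pi)$ does not follow from the definition. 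The correct presentation, which is what the paper uses, is obtained by padding: take
$$\Pi':\ D^0\oplus D^1\xrightarrow{\;0\oplus\Phi\;}D^0\oplus D^2\to\cok(\Phi)\to 0,$$
where $\Phi$ is read as a map $D^1\to D^0\oplus D^2$ via the fixed bases and the $D^0$-summand in the source maps to zero. This has matching ranks $(a+d,d)$, the identity on $D^0\oplus D^2$ induces a surjection $\cok(\Phi)\twoheadrightarrow H^2(D)$ (so $\Pi'$ is finer than $\Pi$), and because the padded rows are zero $\Fit^0_\A(\Pi')=\xi(\A)\cdot\nr_A(\Phi)$, which gives the required containment. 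So the fix is small, but your version as stated does not satisfy the definition of a finer presentation.
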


\begin{proof} Since claim (i) of Theorem \ref{almost there} implies that $\Phi$ coincides with $$M:=\left(
\begin{array}{c|c}
\begin{array}{c}I_a\\ 0\end{array} & \Delta^1
\end{array}
\right)$$ in all but the first $a$ columns, one has \begin{equation}\label{fita}{\rm nr}_A(\A^\times)\cdot\nr_A(\Phi)\subseteq\xi(\A)\cdot\nr_A(\Phi)\subseteq\Fit^a_\A(M).\end{equation}

We next consider the following commutative diagram with exact rows:
\[\begin{CD}
 D^0\oplus D^1 @> 0+\Phi >> D^0 \oplus D^2 @> >> {\rm cok}(\Phi) @> >> 0\\
 @V\Phi^0 VV @V {\rm id}VV @V\epsilon VV \\
D^0\oplus D^1 @> {\rm id}\oplus\delta^1 >> D^0\oplus D^2 @> >> H^2(D
 ) @> >> 0.\end{CD}\]
Here $\Phi$ is interpreted as a homomorphism $D^1\to D^0\oplus D^2$ through the fixed bases; the map $\Phi^0$ is defined as the composition $$D^0\oplus D^1\stackrel{((0+\Phi),(0+{\rm id}))}{\longrightarrow}(D^0\oplus D^2)\oplus D^1\stackrel{\pi_0\oplus{\rm id}}{\to}D^0\oplus D^1$$
where $\pi_0$ denotes the projection to the $D^0$ component; the left-hand side square commutes by Theorem \ref{almost there}(i); and the surjective map $\epsilon$ is defined by the commutativity of the right-hand side square.

We write $\Pi'$ for the free presentation of the $\A$-module $\cok(\Phi)$ that is given by the top row of this diagram. In particular, the presentation $\Pi'$ is finer than the presentation $\Pi$. It follows that
\begin{equation}\label{fit0tot}{\rm nr}_A(\A^\times)\cdot\nr_A(\Phi)\subseteq\xi(\A)\cdot\nr_A(\Phi)=\Fit^0_\A(\Pi')\subseteq\Fit^{0,{\rm tot}}_\A(\Pi).\end{equation}

The inclusions (\ref{fita}) and (\ref{fit0tot}) combine with claim (ii) of Theorem \ref{almost there} to imply the validity of the claimed containment.
\end{proof}

\begin{remark}\label{rgremark}{\em Let $D$ be a complex of $R[G]$-modules which satisfies conditions (ad$_1$), (ad$_2$) and (ad$_3$). Then one may fix a representative of $D$ of the form (\ref{complexoftheformD}). Furthermore, the group $\Ext^2_{R[G]}(H^2(D),R[G])$ vanishes so one may take $z=1$ in (\ref{omaccharelement}).

If in addition $D$ satisfies the condition (ad$_4$) then one may fix a representative of $D$ of the form (\ref{complexoftheformD}) for which $D^0=0$, so that $a=0$ and the term ${\rm nr}_A(\Lambda)$ may be taken to be equal to 1 in (\ref{omaccharelement}). One thus sees that Corollary \ref{omaccor} provides a refinement and generalisation of \cite[Cor. 3.3]{omac}.

In fact, our ability to apply Corollary \ref{omaccor} over more general $R$-orders $\A$ will be crucial in the sequel, and will also be helpful in future applications.
%extends the result \cite[Cor 3.3(iii)]{omac} to a larger class of complexes, while also providing a refinement in the setting considered in loc. cit..
}
\end{remark}

\subsubsection{The proof of Theorem \ref{almost there}} The differential $\delta^0$ is injective and, since the groups $e_0(H^2(D)_\calF)$ and $e_0(H^1(D)_\calF)$ both vanish, there exists a direct sum decomposition $e_0D^1_\calF=V_1^1\oplus V_2^1$ so that the maps $e_0\delta^0_\calF$ and $e_0\delta^1_\calF$ give isomorphisms $e_0D^0_\calF\cong V_1^1$ and $V_2^1\cong e_0D^2_\calF$ respectively. We can therefore fix an isomorphism of $A_E$-modules \begin{equation}\label{20}\iota:(D^0\oplus D^2)_E\to D^1_E\end{equation} whose restriction coincides with the scalar extension of the isomorphism $$e_0D^0_\calF\oplus e_0D^2_\calF\cong e_0D^1_\calF=V_1^1\oplus V_2^1$$ given by $(e_0\delta^0_\calF,(e_0\delta^1_\calF)^{-1})$.

We now assume to be given $t$ in ${\rm Is}_{A_E}(H^2(D)_E,H^1(D)_E)$ and a characteristic element $\calL$ for the pair $(D,t)$. Then the same argument used to prove \cite[Thm. 3.1(iv)]{omac} (under the simplifying assumption that $D$ is acyclic in degree 3) implies that there is $u_\calL\in{\rm nr}_A(\A^\times)$ with the property that \begin{equation}\label{21}e_0\calL={\rm nr}_{A_E}(\iota^{-1})u_\calL.\end{equation}

We next apply the functor $\Hom_\A(-,\A)$ to the tautological short exact sequences
\[\begin{cases}%0 \to D^0 \xrightarrow{\delta^0} Z^1(D) \to H^1(D)\to 0\\
 0 \to Z^1(D) \to D^1 \to B^2(D) \to 0\\
0\to B^2(D)\to D^2\to H^2(D)\to 0.
\end{cases}\]
In particular, since the groups $\Ext_\A^j(D^i,\A)$ vanish for each $j\geq 1$ and each $i\in\{1,2,3\}$, we obtain an exact sequence 
$$\Hom_\A(D^1,\A)\stackrel{\kappa}{\to}\Hom_\A(Z^1(D),\A){\to}\Ext_\A^1(B^2(D),\A)\to 0$$ and an isomorphism $$\Ext_\A^1(B^2(D),\A)\cong\Ext_\A^2(H^2(D),\A).$$
Hence for $1\leq j\leq a$ there exist homomorphisms $\varphi_j$ in $\Hom_\A(D^1,\A)$ with $$\varphi_j|_{Z^1(D)}=\kappa(\varphi_j)=z\cdot\phi_j.$$

We define $\phi$ to be the element of $\Hom_\A(D^1,D^0)$ that maps each element $w$ of $D^1$ to $$\sum_{i=1}^{i=a}\varphi_i(w)\cdot b^0_i$$ where $\{b_i^0\}_{1\leq i\leq a}$ is our fixed basis of $D^0$, and consider the homomorphism $D^1\to D^0\oplus D^2$ that is given by the direct sum $\phi\oplus\delta^1$.

Now, by explicitly comparing this map to the isomorphism $\iota$ defined in (\ref{20}) one computes that on $e_0D^0_E\oplus e_0D^2_E$ there is an equality of functions
\begin{equation}\label{composition}e_0(\phi\oplus\delta^1)_E\circ e_0(\iota)=(e_0(\phi\circ\delta^0)_E,{\rm id}_{e_0D^2_E})\end{equation} and for each basis element $b_i^0$ one has 
\begin{equation}\label{computation}(\phi\circ\delta^0)(b^0_i)=\sum_{j=1}^{j=a}\varphi_j(\delta^0(b^0_i))\cdot b^0_j=\sum_{j=1}^{j=a}z(\phi_j\circ\delta^0)(b^0_i)\cdot b^0_j.
\end{equation}
We set $\lambda_{ij}:=(\phi_j\circ\delta^0)(b^0_i)$.

For any fixed choice of bases $\{b^1_i\}$ and $\{b^2_i\}$ of $D^1$ and $D^2$ respectively we write $\Delta^1$ for the matrix of $\delta^1$ and $\Phi$ for the matrix of $\phi\oplus\delta^1$, as computed with respect to these choices and $\{b^0_i\}$. Claim (i) is then trivially satisfied.

We next note that $\cok(\phi\oplus\delta^1)$ surjects canonically onto $H^2(D)=\cok(\delta^1)$.
In particular, for any primitive central idempotent $e$ of $A$ one has that $e(\Phi)$ is invertible over $Ae$ only if $e=ee_0$, so we deduce that ${\rm nr}_A(\Phi)=e_0{\rm nr}_A(\Phi)$. Combining this equality with (\ref{21}), (\ref{composition}) and (\ref{computation}) we finally find that
\begin{align*}\label{24}{\rm nr}_A(z)^a{\rm nr}_A(\Lambda)e_0\mathcal{L}&={\rm nr}_A((z\lambda_{ij})_{1\leq i,j\leq a})e_0{\rm nr}_{A_E}(\iota^{-1})u_\calL\\\notag
&={\rm nr}_{Ae_0}(e_0(\phi\circ\delta^0)_\calF){\rm nr}_{A_Ee_0}(e_0(\iota^{-1}))u_\calL\\\notag
&=e_0{\rm nr}_A((\phi\oplus\delta^1)_\calF)u_\calL\\\notag
&=e_0{\rm nr}_A(\Phi)u_\calL\\\notag
&={\rm nr}_A(\Phi)u_\calL.
%\\ \notag &={\rm nr}_A(\Phi)u_\calL.
\end{align*}
Here each reduced norm is computed with respect to our fixed bases. This equality completes the proof.

\section{Non-abelian higher special elements}\label{nahse}

In this section we use the theory of non-commutative exterior powers to introduce our notion of a `non-abelian higher special element'. We also establish its basic rationality properties.

\subsection{Non-commutative exterior powers}\label{exterior}

In this section we review the construction of non-commutative exterior powers due to Burns and Sano \cite{nagm}.

The ring $A$ is semisimple and so there is a direct product Wedderburn decomposition $$A\cong\prod_{i\in I}A_i,$$ in which the index set $I$ is finite each ring $A_i$ is simple (and unique up to isomorphism).

For any choice of splitting fields $E_i$ for the rings $A_i$ over $Z(A_i)$ and of simple $E_i\otimes_{Z(A_i)}A_i$-modules $V_i$, and for any non-negative integer $r$, there is an $r$-th reduced exterior power functor ${{\bigwedge}}^r_A$ from the category of finitely generated $A$-modules to that of $Z(A)$-modules.

If $A$ is commutative then one may take $V_i=E_i=A_i$ and then this functor coincides with the standard $r$-th exterior power. In general, there are canonical choices of splitting fields $E_i$ and then, since all simple $E_i\otimes_{Z(A_i)}A_i$-modules are isomorphic, different choices of such modules $\{V_i\}_{i\in I}$ are easily seen to give naturally equivalent reduced exterior powers. See \cite[Rem. 4.4]{nagm} for more details.

Reduced exterior powers also behave well under scalar extension, in that for any algebraic extension $\mathcal{F}'$ of $\mathcal{F}$ and any finitely generated $A$-module $M$, there is an injective homomorphism ${\bigwedge}_A^rM\to{\bigwedge}_{(A\otimes_\mathcal{F}\mathcal{F}')}(M\otimes_\mathcal{F}\mathcal{F}')$.

For any $0\leq s\leq r$ there are natural duality pairings
\begin{equation}\label{dualitypairing}{\bigwedge}_A^rM\times{\bigwedge}_{A^{\rm op}}^s\Hom_A(M,A)\to{\bigwedge}_A^{r-s}M\end{equation}
for every finitely generated $A$-module $M$. In the sequel we shall denote this pairing by $(m,\varphi)\mapsto\varphi(m)$.

In addition, for fixed ordered $E_i$-bases of the spaces $V_i$, each $r$-tuples $(m_j)_{1\leq j\leq r}$ of elements of $M$ and $(\varphi_j)_{1\leq j\leq r}$ of elements of $\Hom_A(M,A)$ have associated elements $\wedge_{j=1}^{j=r}m_j$ of ${\bigwedge}_A^rM$ and $\wedge_{j=1}^{j=r}\varphi_j$ of ${\bigwedge}_{A^{\rm op}}^r\Hom_A(M,A)$. If $A$ is commutative and the required bases are specified to be the identity elements of each $V_i=E_i=A_i$ then these definitions coincide with the classical definitions of exterior products.

In general, it is proved in \cite[Lem. 4.10]{nagm} that one always has
\begin{equation}\label{2.6}(\wedge_{i=1}^{i=r}\varphi_i)(\wedge_{j=1}^{j=r}m_j)={\rm nr}_{M_r(A^{\rm op})}((\varphi_i(m_j))_{1\leq i,j\leq m})\end{equation}
so that, in particular, this element belongs to $Z(A)$ and only depends on the $r$-tuples $(m_j)_{1\leq j\leq r}$ and $(\varphi_j)_{1\leq j\leq r}$. These properties will often be useful throughout the sequel.

\begin{remark}\label{fixedconventions}{\em Let $k$ be a field with algebraic closure $k^c$ and set $G_k:={\rm Gal}(k^c/k)$.
For each irreducible complex character $\chi$ of $G_k$ that has open kernel, we write $k(\chi)$ for the subfield of $k^c$ that is fixed by $\ker(\chi)$ and $n_\chi$ for the exponent of ${\rm Gal}(k(\chi)/k)$. We also write $E_\chi$ for the field generated over $\QQ$ by a primitive $n_\chi$-th root of unity. Then there exists a representation
$$\rho_\chi:{\rm Gal}(k(\chi)/k)\to{\rm GL}_{\chi(1)}(E_\chi)$$
of character $\chi$.

For a given finite Galois extension $L/k$ in $k^c$ with Galois group $\mathcal{G}_L:={\rm Gal}(L/k)$ we write $E_L$ for the composite of the fields $E_\chi$ as $\chi$ runs over ${\rm Ir}(\mathcal{G}_L)$. Then, for a fixed choice of representations $\rho_\chi$ for each $\chi\in{\rm Ir}(\mathcal{G}_L)$, the induced homomorphisms $\rho_{\chi,*}:E_L[\mathcal{G}_L]\to M_{\chi(1)}(E_L)$ combine to give an isomorphism
$$E_L[\mathcal{G}_L]\cong\prod_{\chi\in{\rm Ir}(\mathcal{G}_L)}M_{\chi(1)}(E_L).$$

This decomposition shows that $E_L$ is a splitting field for $\QQ[\mathcal{G}_L]$, that the spaces $V_\chi:=E_L^{\chi(1)}$, considered as the first columns of the component $M_{\chi(1)}(E_L)$, are a set of representatives of the simple $E_L[\mathcal{G}_L]$-modules and that one can specify the standard basis of $E_L^{\chi(1)}$ to be the ordered basis of $V_\chi$.

In this way, the specification of a representation $\rho_\chi$ for each irreducible complex character $\chi$ of $G_k$ that has open kernel leads to a canonical choice of the data necessary to define reduced exterior powers over any algebra of the form $\mathcal{F}[{\rm Gal}(L/k)]$, with $L$ a finite Galois extension of $k$. We assume throughout the sequel and without further explicit comment that, whenever we have fixed a field $k$, all such reduced exterior powers are defined relative to such a fixed choice of representations.
}\end{remark}

\subsection{Definitions}\label{defns}

In this section we first construct the necessary idempotents and then finally define our non-abelian higher special elements.

\begin{definition}\label{eca}{\em Let $C$ be an object of $D^{\rm a}(\A)$. For any non-negative integer $a$ we define a central idempotent $e_a=e_{C,a}$ of $A$ to be the sum of all primitive central idempotents $e$ of $A$ with the property that the free $Ae$-module $eH^2(C)_\calF$ has rank $a$. We also set $e_{(a)}=e_{C,(a)}:=\sum_{b\geq a}e_b$.}\end{definition}

We now fix an object $C$ of $D^{\rm a}(\A)$ as well as a surjective homomorphism of $\A$-modules \begin{equation}\label{pi}\pi:H^2(C)\to Y_\pi.\end{equation}

\begin{definition}\label{epi}{\em We define a central idempotent $e_\pi$ of $A$ as the sum of all primitive central idempotents $e$ of $A$ for which $e(\ker(\pi)_\calF)$ vanishes.

The isomorphism
$$e_\pi\cdot\pi_\calF:e_\pi H^2(C)_\calF\to e_\pi Y_{\pi,\calF}$$ then induces, for each non-negative integer $a$, a canonical isomorphism of $Z(Ae_\pi)$-modules
$$(e_\pi\cdot\pi_\calF)^{(a)}:e_\pi(\bigwedge_A^a H^2(C)_\calF)\to e_\pi(\bigwedge_A^aY_{\pi,\calF}).$$}\end{definition}

For a given $E$-trivialisation $t\in\tau(C_E)$ of $C$, its inverse $t^{-1}$ induces, together with the map $(e_\pi\cdot\pi_\calF)^{(a)}$ for any non-negative integer $a$, a composite isomorphism $t_\pi^a$ of $Z(\A_Ee_\pi)$-modules $$e_\pi(\bigwedge^{a}_{\A_E}H^1(C)_E)\stackrel{\sim}{\to}e_\pi(\bigwedge^{a}_{\A_E} H^2(C)_E)\stackrel{\sim}{\to}e_\pi(\bigwedge^{a}_{\A_E}Y_{\pi,E}).$$
%for each non-negative integer $a$.

\begin{defn}\label{defn}{\em Let $\mathcal{L}$ be a characteristic element for the pair $(C,t)$ and $\pi$ be any surjective homomorphism as in (\ref{pi}).

Let $\mathcal{X}$ be a finite ordered subset of $Y_\pi$ of cardinality $a\geq 0$. The `non-abelian higher special element' associated to the data $(C,t,\mathcal{L},\pi,\mathcal{X})$ is $$\eta_{\mathcal{X}}:=(t_\pi^a)^{-1}(e_\pi\cdot e_{C,a}\cdot\mathcal{L}\cdot\wedge_{x\in\mathcal{X}}x)\in (e_\pi\cdot e_{C,a})(\bigwedge^{a}_{\A_E} H^1(C)_E).$$
}
\end{defn}

\subsection{Rationality of non-abelian higher special elements}\label{ratsection}

Before studying the finer integrality properties of non-abelian higher special elements we must establish that they live in rational non-commutative exterior powers.

\begin{lemma}\label{rationality} For any data $(C,t,\mathcal{L},\pi,\mathcal{X})$ as in Definition \ref{defn} one has $$\eta_{\mathcal{X}}\in (e_\pi\cdot e_{C,a})\bigwedge^{a}_{A}H^1(C)_\calF.$$
\end{lemma}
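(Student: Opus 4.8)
The plan is to reduce the claim to a statement about reduced norms of matrices, using the key identity \eqref{2.6} together with the structural properties of characteristic elements and organising matrices developed in \S\ref{omacsection}. The point is that $\eta_{\mathcal{X}}$ a priori lies in the $E$-rational exterior power $e_\pi\cdot e_{C,a}\bigwedge^a_{\A_E}H^1(C)_E$, and we must show that it descends to the $\calF$-rational one. First I would fix, for each primitive central idempotent $e$ appearing in $e_\pi e_{C,a}$, an $\calF$-basis $\{v_1^e,\ldots,v_a^e\}$ of the free rank-$a$ module $eH^1(C)_\calF$ and a dual basis $\{\psi_1^e,\ldots,\psi_a^e\}$ of $\Hom_{A}(eH^1(C)_\calF,Ae)$ (note $H^1(C)$ is $R$-torsion-free by (ad$_4$), so $H^1(C)_\calF$ really is the relevant free module after multiplication by these idempotents). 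Then $e\cdot\wedge_{j=1}^{a}v_j^e$ is an $\calF$-generator of the rank-one $Z(Ae)$-module $e\bigwedge^a_A H^1(C)_\calF$, and by \eqref{2.6} the coefficient expressing $e\eta_{\mathcal{X}}$ relative to this generator is computed by pairing $\eta_{\mathcal{X}}$ against $e\wedge_{j=1}^a\psi_j^e$, which yields a reduced norm of a matrix with entries of the form $\psi_i^e(\,(t_\pi^a)^{-1}(\cdots)\,)$ — an element of $Z(A_E e)$ a priori.

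The key step is then to identify this reduced norm with $e\cdot e_{C,a}\cdot\mathcal{L}$ times the reduced norm of an explicit matrix with entries in $Z(A e)$, hence to see it lies in $Z(Ae)$. Here I would invoke Lemma \ref{lam lemma} and the construction behind Proposition \ref{almost there}: fix a representative of $C$ of the form \eqref{complexoftheformD} (using (ad$_1$)–(ad$_3$), exactly as in Remark \ref{rgremark}), apply \S\ref{omacsection} with the data $z=1$ and $\phi_\bullet$ chosen to be (lifts of) a basis of $\Hom_A$ adapted to the chosen trivialisation and to $\pi$, and read off from \eqref{omaccharelement} and \eqref{21} that $e_0(C)\cdot\mathcal{L}$ is, up to a unit reduced norm ${\rm nr}_A(\A^\times)$, equal to ${\rm nr}_{A_E}(\iota^{-1})$ for the $A_E$-isomorphism $\iota$ of \eqref{20} whose restriction to the relevant idempotent components is given by the $\calF$-rational maps $(e_0\delta^0_\calF,(e_0\delta^1_\calF)^{-1})$. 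Restricting to the idempotent $e_\pi e_{C,a}$, which is dominated by $e_0(C)$ on those components where $H^2(C)_\calF$ has positive rank (and where, on the rank-zero part, $e_{C,0}\mathcal{L}$ is itself already handled by the $a=0$ case of Remark \ref{rgremark} with $D^0=0$), the trivialisation-induced isomorphism $t_\pi^a$ becomes, on each component, an exterior power of an $\calF$-rational isomorphism composed with $(e_\pi\pi_\calF)^{(a)}$, which is also $\calF$-rational by construction in Definition \ref{epi}.

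Putting these together: $e\eta_{\mathcal{X}}$ equals $(e\cdot\wedge_j v_j^e)$ times a product of three reduced norms — one coming from $e\,e_{C,a}\mathcal{L}$ which by the previous paragraph is a reduced norm of an $\calF$-rational matrix up to a unit, one from the $\calF$-rational change-of-basis matrix relating $\{v_j^e\}$ to the image under $(t_\pi^a)^{-1}$ of the $\wedge_{x\in\mathcal{X}}x$-generator, and one from the map $(e_\pi\pi_\calF)^{(a)}$ expressed in bases — each of which lies in $Z(Ae)$. Hence the coefficient lies in $Z(Ae)$, i.e. $e\eta_{\mathcal{X}}\in e\bigwedge^a_A H^1(C)_\calF$; summing over the finitely many $e$ dividing $e_\pi e_{C,a}$ gives the claim. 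The main obstacle I anticipate is bookkeeping the compatibility of the several bases and idempotent decompositions — in particular making sure the trivialisation $t$ can be arranged so that $t_\pi^a$ restricted to each component is literally an exterior power of an $\calF$-rational map postcomposed with the $\calF$-rational $(e_\pi\pi_\calF)^{(a)}$, rather than merely conjugate to one; this is essentially the content that makes Proposition \ref{almost there}(ii) applicable with the chosen $\phi_\bullet$, and it is where the earlier erroneous argument (acknowledged in the Acknowledgements) presumably went wrong.
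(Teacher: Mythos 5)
Your approach breaks down at the step where you appeal to equation \eqref{21} (and the organising-matrix machinery of \S\ref{omacsection}) applied directly to $C$. That equation controls only $e_0(C)\cdot\calL$, and by definition $e_0(C)$ is the sum of primitive central idempotents annihilating $H^1(A\otimes_\A C)\cong H^2(A\otimes_\A C)$, so $e_0(C)=e_{C,0}$. Hence for $a>0$ the idempotents $e_0(C)$ and $e_{C,a}$ are orthogonal and \eqref{21} says nothing at all about the projection $e_{C,a}\cdot\calL$ that the lemma actually concerns. Your remark that $e_\pi e_{C,a}$ ``is dominated by $e_0(C)$ on those components where $H^2(C)_\calF$ has positive rank'' has the support of $e_0(C)$ exactly backwards: on positive-rank components the isomorphism $\iota$ of \eqref{20} is not pinned down by the $\calF$-rational differentials of $C$, and the reduced-norm identity you hope to read off is simply unavailable there. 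Likewise, one cannot in general arrange $t$ so that $t_\pi^a$ is an exterior power of an $\calF$-rational map outside $e_0(C)$: $\calL$ is fixed together with $t$, and $e_{C,a}\calL$ need not itself be $\calF$-rational.

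The paper's proof avoids \S\ref{omacsection} entirely and argues component by component with a more elementary comparison of trivialisations. After disposing of the degenerate case where $\{e\cdot x:x\in\mathcal{X}\}$ is not a basis (then $e\cdot\wedge_{x\in\mathcal{X}}x=0$ by \eqref{2.6}), it fixes a non-canonical $\calF$-rational isomorphism $t'\colon H^1(C)_\calF\to H^2(C)_\calF$ and enlarges $E$ until $\ker(\delta_{Ae,E})\subseteq Z(Ae)^\times$. Because $\calL$ is a characteristic element for the pair $(C,t)$, the identity
$\delta_{Ae,E}(e\calL)=-\chi^{\rm ref}_{Ae,E}(\A_Ee\otimes^{\mathbb L}_A C_\calF,e\cdot t)=\delta_{Ae,E}\bigl(\nr_{\A_Ee}(e\cdot t^{',-1}_E\circ e\cdot t^{-1})\bigr)$
forces $e\calL\cdot\nr_{\A_Ee}(e\cdot t\circ e\cdot t'_E)$ to lie in $Z(Ae)^\times$, an $\calF$-rational unit. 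Rewriting $e\eta_{\mathcal{X}}$ (via \eqref{eeta} and \cite[Lem.\ 4.13]{nagm}) as this unit times $\bigl((e\cdot t^{',-1}\circ e\cdot(\pi_\calF)^{-1})^{(a)}\bigr)(e\cdot\wedge_{x\in\mathcal{X}}x)$, which is manifestly $\calF$-rational, finishes the argument. Crucially this works on every primitive component at once, including those on which $H^2(C)_\calF$ has positive rank --- exactly what your organising-matrix route cannot supply.
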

\begin{proof}
We set $\eta:=\eta_{\mathcal{X}}$. It is enough to prove, for each primitive central idempotent $e$ of $A$, that $e(\eta)\in (e\cdot e_\pi\cdot e_{C,a})\bigwedge^{a}_{A}H^1(C)_\calF$.

If $e\cdot e_\pi\cdot e_{C,a}=0$ then $e(\eta)=0$ and this containment is clear. So we henceforth assume that $e\cdot e_\pi\cdot e_{C,a}\neq 0$, so that $e\cdot e_\pi\cdot e_{C,a}=e$, $e(\ker(\pi)_\calF)$ vanishes and 
\begin{equation}\label{eeta}e(\eta)=(e\cdot t_\pi^a)^{-1}(e\cdot\mathcal{L}\cdot\wedge_{x\in\mathcal{X}}x)=e\cdot\mathcal{L}\cdot(e\cdot t^{(a)})((e\cdot (\pi_\calF)^{-1})^{(a)}(e\cdot\wedge_{x\in\mathcal{X}}x))\end{equation}
in $e\bigwedge^{a}_{\A_E}H^1(C)_E$.
It is enough to prove that $e(\eta)$ belongs to $ e\bigwedge^{a}_{A}H^1(C)_\calF$.

In addition, the $Ae$-module $eY_{\pi,\calF}$ is free of rank $a$ (since so is $eH^2(C)_\calF$) and, if the set $\{e\cdot x:x\in\mathcal{X}\}$ is not a basis for this module, then (\ref{2.6}) implies that $$e\cdot\wedge_{x\in\mathcal{X}}x=0$$ in $e\bigwedge_A^aY_{\pi,\calF}$. In this case also $e(\eta)=0$ and the required containment is clear. We thus assume henceforth that $\{e\cdot x:x\in\mathcal{X}\}$ is an $\A_Ee$-basis of $eY_{\pi,E}$.

Now, the $A$-modules $H^1(C)_\calF$ and $H^2(C)_\calF$ are (non-canonically) isomorphic, so we may and will fix an isomorphism $t':H^1(C)_\calF\to H^2(C)_\calF$.

After enlarging $E$ if necessary, we may assume that the reduced norm map $\nr_{\A_Ee}:K^1(\A_Ee)\to Z(\A_Ee)^\times$ is bijective, and this implies that $\ker(\delta_{Ae,E})\subseteq Z(Ae)^\times$. Since
$$\delta_{Ae,E}(e\cdot\mathcal{L})=-\chi^{\rm ref}_{Ae,E}(\A_Ee\otimes_{A}^{\mathbb{L}}C_\calF,e\cdot t)=\delta_{Ae,E}(\nr_{\A_Ee}(e\cdot t^{',-1}_E\circ e\cdot t^{-1}))$$ we thus deduce that $$e\cdot\mathcal{L}\cdot\nr_{\A_Ee}(e\cdot t\circ e\cdot t'_E)\in Z(Ae)^\times.$$
It follows that the element $$e\cdot\mathcal{L}\cdot\nr_{\A_Ee}(e\cdot t\circ e\cdot t'_E)\cdot((e\cdot t^{',-1}\circ e\cdot (\pi_\calF)^{-1})^{(a)}(e\cdot\wedge_{x\in\mathcal{X}}x))$$ belongs to $e\bigwedge^{a}_{A}H^1(C)_\calF$. Since (\ref{eeta}) combines with \cite[Lem. 4.13]{nagm} to imply that the last displayed expression is equal to $e(\eta)$, this containment completes the proof.

\end{proof}

\section{The main algebraic result}\label{mainsection}

In this section we state our main algebraic result under the specification $\A:=R[G]$. We recall that $\mathcal{D}(R[G])$ denotes the `ideal of denominators' that is introduced by Burns and Sano in \cite[Def. 3.4]{nagm}.%It would certainly be possible to prove more general versions of this result.
\subsection{The statements}

Fix any data $(C,t,\mathcal{L},\pi,\mathcal{X})$ as in Definition \ref{defn} and set $a:=|\mathcal{X}|$. For any $R[G]$-module $M$ we set $$M':=R[G]e_{C,(a)}\otimes_{R[G]}M.$$

The statement of our main result will require data of the following type:\begin{itemize}\item[(i)] an element $$x\in (R[G]\cap R[G]')\cdot\mathcal{D}(R[G]');$$ \item[(ii)] an element $$z\in{\rm Ann}_{R[G]'}(\Ext^2_{R[G]'}(H^2(C)',R[G]'));$$
\item[(iii)] an $a$-tuple of homomorphisms $$\varphi_1,\ldots,\varphi_a\in\Hom_{R[G]}(H^1(C),R[G]).$$
\end{itemize}

The proof of this result will be given in \S \ref{mainproof} below.

\begin{theorem}\label{integrality} The $R[G]'$-module $H^2(C)'$ admits a locally-free, locally-quadratic presentation $\Pi$ with the property that $${\rm nr}_{\calF[G]'}( z)^a\cdot(\wedge_{j=1}^{j=a}\varphi_j)(\eta_{\mathcal{X}})\in{\rm Fit}_{R[G]'}^a(\Pi)$$
and
$$ x\cdot{\rm nr}_{\calF[G]'}( z)^a\cdot(\wedge_{j=1}^{j=a}\varphi_j)(\eta_{\mathcal{X}})\in{\rm Ann}_{R[G]}((Y_{\pi})_{\rm tor}).$$

\end{theorem}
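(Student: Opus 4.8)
The plan is to reduce Theorem \ref{integrality} to Corollary \ref{omaccor} by choosing a suitable representative complex for $C'$ and then translating the evaluation pairing $(\wedge_{j=1}^{j=a}\varphi_j)(\eta_{\mathcal X})$ into a reduced norm that appears in the Fitting invariant computed in that corollary.

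First I would work throughout with $\A := R[G]'$ (so that $e_{C,(a)}$ acts as the identity and all of $H^1(C)'$, $H^2(C)'$ have `rank at least $a$' over the relevant Wedderburn components) and pass to a representative $D$ of $C'$ of the shape (\ref{complexoftheformD}), i.e. $D^0 \stackrel{\delta^0}{\to} D^1 \stackrel{\delta^1}{\to} D^2$ with $D^j$ finitely generated free of ranks $a$, $d$, $d-a$; this is possible by the acyclicity outside degrees one and two together with (ad$_2$), exactly as discussed in Remark \ref{rgremark} (note that after applying $e_{C,(a)}$ the relevant rank of $H^2$ is uniformly $\geq a$, which is what lets us take the degree-zero term free of rank exactly $a$). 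The data $z$ of hypothesis (ii) is then precisely the data (D$_1$) of \S\ref{omacsection}. For (D$_2$) I would take $\phi_\bullet$ to be the restrictions to $\ker(\delta^1) = Z^1(D)$ of homomorphisms obtained from the surjection $\pi$ and the chosen elements $\mathcal X$: concretely, pull the $\varphi_j \in \Hom_{R[G]}(H^1(C),R[G])$ back along $D^1 \twoheadrightarrow H^1(D)\cong H^1(C)'$ — wait, more carefully, the $\phi_j$ of (D$_2$) should be dual to the data defining $\eta_{\mathcal X}$ through $\pi$, so I would instead build $\phi_\bullet$ so that the matrix $\Lambda = (\phi_j(\delta^0(b_i^0)))$ is exactly the matrix whose reduced norm computes $t^a_\pi$ applied to $\wedge_{x\in\mathcal X} x$; the identification (\ref{2.6}) is the bridge here. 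Then Proposition \ref{almost there} produces the matrix $\Phi$, and Corollary \ref{omaccor} tells us that $\nr_A(z)^a \cdot \nr_A(\Lambda)\cdot e_0(D)\cdot \mathcal L$ lies in $\Fit^{0,\mathrm{tot}}_\A(\Pi) \cap \Fit^a_\A(M)$, where $\Pi$ is the canonical presentation of $H^2(D) = H^2(C)'$.

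The key identification to nail down is that $\nr_{\calF[G]'}(z)^a\cdot(\wedge_{j=1}^{j=a}\varphi_j)(\eta_{\mathcal X})$ equals (up to a unit in $\nr_A(\A^\times)$, which is harmless since Fitting invariants are $\xi(\A)$-ideals and $\nr_A(\A^\times)\subseteq\xi(\A)^\times$ locally) the element $\nr_A(z)^a\cdot\nr_A(\Lambda)\cdot e_0(D)\cdot\mathcal L$ from (\ref{omaccharelement}). Unwinding Definition \ref{defn}, $\eta_{\mathcal X} = (t^a_\pi)^{-1}(e_\pi e_{C,a}\mathcal L\cdot \wedge_{x\in\mathcal X}x)$, and applying $\wedge_{j=1}^{j=a}\varphi_j$ to it and using the multiplicativity of the duality pairing (\ref{dualitypairing}) together with (\ref{2.6}) and \cite[Lem. 4.13]{nagm} (the same lemma already invoked in the proof of Lemma \ref{rationality}) should turn the left side into $\mathcal L$ times a product of two reduced norms: one for the comparison isomorphism $\iota$ (which is $e_0(D)$-supported and reproduces the $u_\calL$/$\nr_{A_E}(\iota^{-1})$ factor of (\ref{21})), and one which is exactly $\nr_A(\Lambda)$ built from the $\varphi_j$ and $\pi$ and $\mathcal X$. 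The bookkeeping of idempotents — checking that $e_\pi e_{C,a}$ on the special-element side matches $e_0(D)$ (after restricting to the components where $eH^2(C)_\calF$ has rank $a$ and $e\mathcal X$ is a basis) on the organising-matrix side — is the delicate point, and handling the components where $e_\pi e_{C,a}$ vanishes or where $e\mathcal X$ fails to be a basis (so that $\eta_{\mathcal X}$ has zero component and the containment is trivial) is the analogue of the case analysis in Lemma \ref{rationality}.

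For the first displayed containment I would then simply observe that $\Fit^a_\A(M)$, with $M = \left(\begin{smallmatrix} I_a \\ 0\end{smallmatrix} \,\middle|\, \Delta^1\right)$, is itself the $a$-th Fitting invariant of the locally-free locally-quadratic presentation $\Pi$ of $H^2(C)'$ given by $D^0\oplus D^1 \xrightarrow{\mathrm{id}\oplus\delta^1} D^0\oplus D^2\to H^2(C)'\to 0$ (this is locally-quadratic since $a + d = a + (d-a) + a$), which yields $\nr_{\calF[G]'}(z)^a\cdot(\wedge_{j=1}^{j=a}\varphi_j)(\eta_{\mathcal X})\in\Fit^a_{R[G]'}(\Pi)$. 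For the second containment I would combine the $\Fit^{0,\mathrm{tot}}_\A(\Pi)$-membership from Corollary \ref{omaccor} with Lemma \ref{BSlemma}: multiplying by the ideal of denominators $\mathcal D(R[G]')$ sends $\Fit^{0,\mathrm{tot}}$ into $\Ann_{R[G]'}(H^2(C)')$, and then the element $x\in (R[G]\cap R[G]')\cdot\mathcal D(R[G]')$ does double duty — the $\mathcal D(R[G]')$-part activates Lemma \ref{BSlemma} and the $R[G]\cap R[G]'$-part pushes the resulting annihilator of $H^2(C)'$ back into $R[G]$ and onto $(Y_\pi)_{\rm tor}$, using that the surjection $\pi\colon H^2(C)\twoheadrightarrow Y_\pi$ makes every annihilator of $H^2(C)'$ (hence of $H^2(C)$ up to the $2$-primary discrepancy absorbed into $R[G]\cap R[G]'$) an annihilator of $Y_\pi$ and a fortiori of its torsion submodule. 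The main obstacle I anticipate is the precise matching of idempotents and bases in the second paragraph — making rigorous that the reduced-norm factor coming from evaluating $\wedge\varphi_j$ on $\eta_{\mathcal X}$ is literally $\nr_A(\Lambda)$ for the $\phi_\bullet$ fed into Proposition \ref{almost there}, including the compatibility of the canonical choices of exterior-power data from Remark \ref{fixedconventions} on both sides.
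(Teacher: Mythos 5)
Your overall strategy — feed a suitable three‑term complex into Proposition \ref{almost there}/Corollary \ref{omaccor} and identify $(\wedge_{j}\varphi_j)(\eta_{\mathcal X})$ with the reduced norm appearing in (\ref{omaccharelement}) — is indeed the skeleton of the paper's proof, but there is a genuine conceptual slip that propagates through the argument. The complex $D$ to which Proposition \ref{almost there} is applied is \emph{not} a representative of $C' = R[G]e_{(a)}\otimes_{R[G]}^{\mathbb{L}}C$. Since $C$ is admissible and $R$ is a DVR, $C$ (and hence $C'$) has a representative in degrees one and two only; to manufacture a non‑trivial degree‑zero term $D^0$ of rank $a$ one must instead form the mapping cone of the morphism $X[-2]\oplus X[-1]\to C'$ induced by the injections $\iota_1,\iota_2$ of the free rank‑$a$ module $X$ generated by $\mathcal X$. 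This cone $D$ has $H^2(D)=H^2(C)'/\iota_2(X)$ and $H^1(D)=H^1(C')/\iota_1(X)$, not $H^2(C)'$ and $H^1(C')$. Consequently the ``canonical'' presentation $D^0\oplus D^1\xrightarrow{\,{\rm id}\oplus\delta^1\,}D^0\oplus D^2$ from Corollary \ref{omaccor} presents $H^2(D)$, not $H^2(C)'$, and a separate step is needed: one must write down a \emph{different} map $\theta(\alpha,\beta):=(\alpha,\iota_2(\alpha)+\partial'(\beta))$ on $X\oplus F^{1,'}\to X\oplus F^{2,'}$, with cokernel-map $\rho(\gamma,\delta):=-\iota_2(\gamma)+\delta+\im(\partial')$, and check by hand that this gives a free quadratic presentation $\Pi$ of $H^2(C)'$ whose $a$‑th Fitting invariant agrees with $\Fit^a_\A(M)$. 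Your identification of $\Pi$ as ``the'' canonical presentation is therefore wrong as stated. The same confusion infects your second containment: $\mathcal D(R[G]')\cdot\Fit^{0,{\rm tot}}_\A(\Pi_D)$ lands in $\Ann_{R[G]'}(H^2(D))=\Ann_{R[G]'}(H^2(C)'/\iota_2(X))$, not in $\Ann_{R[G]'}(H^2(C)')$; the passage to $(Y_\pi)_{\rm tor}$ then requires the observation that $H^2(D)$ surjects onto $(R[G]'\otimes_{R[G]}Y_\pi)/X$ and that, because $X$ is free hence $R$‑torsion‑free, annihilating this quotient still annihilates $(Y'_\pi)_{\rm tor}$, before the $(R[G]\cap R[G]')$‑factor of $x$ pushes the annihilator into $R[G]$.

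Two further things you gloss over. First, to even construct $D$ one needs $\mathcal X$ to generate a free $R[G]'$‑submodule of rank $a$ inside $Y'_{\pi,\calF}$; this is not automatic and cannot be handled by a pointwise case analysis on idempotents as in Lemma \ref{rationality}, because the whole construction of $D$ requires a single free $X$. The paper proves a separate reduction (Lemma \ref{ranka}) using the perturbation $\mathcal Y_N=\{x_i+p^Ny_i\}$ and a limiting argument showing that membership in the fixed $\xi(\A)$‑lattices $\Fit^a_{R[G]'}(\Pi)$ and ${\rm Ann}_{R[G]}((Y_\pi)_{\rm tor})$ is stable under small perturbations. Second, applying Corollary \ref{omaccor} to $D$ requires a characteristic element $\calL_D$ of $D$, and one must show explicitly how $\calL_D$ relates to the original $\calL$; this is the content of Lemma \ref{below}, which uses the triangle $X[-2]\oplus X[-1]\to C'\to D$ and additivity of refined Euler characteristics, together with Lemma \ref{lam lemma} to lift back from $R[G]e_a$. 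Your remark that ``the comparison isomorphism $\iota$ reproduces the $u_\calL$ factor'' gestures at this but does not supply the actual compatibility $e_a\calL_D=e_a\calL\cdot\nr_{E[G]'}(\iota_{1,E}^{-1}\circ e_{(a)}t\circ e_{(a)}\pi_E^{-1})$ that makes the computation (\ref{compute}) close.
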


\begin{remark}{\em Let $C$ be a complex in $D^{\rm a}(R[G])$ which, as an strenghthening of condition (ad$_2$), satisfies the condition that the Euler characteristic of $C$ in the Grothendiek group $K_0(R[G])$ vanishes. Then one may take the presentation $\Pi$ to be free (quadratic) rather than just locally-free.
}\end{remark}

As explained in Example \ref{dual exam}, $R[G]$ is Gorenstein with respect to the anti-involution $\iota_\#$ that satisfies $\iota_\#(g) = g^{-1}$ for all $g$ in $G$, and therefore the group ${\rm Ext}^2_{R[G]}(M,R[G])$ vanishes for any finitely generated $R[G]$-module $M$.

As an immediate consequence one thus obtains the following simplification of Theorem \ref{integrality}.

\begin{corollary}\label{integralitycor} Fix $(C,t,\calL,\pi)$. Fix any non-negative integer $a$ for which the $\calF[G]$-module $\calF\otimes_R H^2(C)$ contains a free submodule of rank $a$. 

Then for any subset $\mathcal{X}$ of $Y_\pi$ of cardinality $a$, any $a$-tuple of homomorphisms $\varphi_1,\ldots,\varphi_a\in\Hom_{R[G]}(H^1(C),R[G])$ and any $x\in \mathcal{D}(R[G])$ one has $$(\wedge_{j=1}^{j=a}\varphi_j)(\eta_{\mathcal{X}})\in\Fit^a_{R[G]}(\Pi)\,\,\,\,\,\,\,\,\text{  and  }\,\,\,\,\,\,\,\,x\cdot (\wedge_{j=1}^{j=a}\varphi_j)(\eta_{\mathcal{X}})\in{\rm Ann}_{R[G]}((Y_{\pi})_{\rm tor}).$$

\end{corollary}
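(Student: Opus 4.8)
The plan is to derive Corollary \ref{integralitycor} directly from Theorem \ref{integrality} by checking that, under the hypotheses in force, the three pieces of auxiliary data $x$, $z$ and $\varphi_1,\ldots,\varphi_a$ appearing in that theorem may be chosen so that all correction factors become trivial. Concretely I would show that the Gorenstein property of $R[G]$ makes $z=1$ an admissible choice, that the freeness hypothesis on $\calF\otimes_RH^2(C)$ forces $R[G]'=R[G]$, and that conditions (i) and (iii) of Theorem \ref{integrality} are then met by any $x\in\mathcal{D}(R[G])$ and by the tuple $\varphi_1,\ldots,\varphi_a$ given in the corollary.

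For the choice of $z$: as recorded just before the statement of the corollary, $R[G]$ is Gorenstein with respect to $\iota_\#$ by Example \ref{dual exam}, and hence $\Ext^2_{R[G]}(M,R[G])=0$ for every finitely generated $R[G]$-module $M$. Since $R[G]'=R[G]e_{C,(a)}$ is a direct factor of $R[G]$ (and so also Gorenstein) and $H^2(C)'$ is a finitely generated $R[G]'$-module, the group $\Ext^2_{R[G]'}(H^2(C)',R[G]')$ occurring in condition (ii) of Theorem \ref{integrality} vanishes; therefore $z=1$ satisfies condition (ii), and since $\nr_{\calF[G]'}(1)=1$ the factor $\nr_{\calF[G]'}(z)^a$ disappears from both conclusions of the theorem.

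For the identification $R[G]'=R[G]$: the algebra $A=\calF[G]$ is semisimple, so $H^2(C)_\calF=\bigoplus_e eH^2(C)_\calF$ over the primitive central idempotents $e$ of $A$, and (in the setting underlying Definition \ref{eca}) each summand is free over $Ae$, say of rank $r_e$. An $A$-linear embedding $A^a\hookrightarrow H^2(C)_\calF$ exists precisely when $r_e\geq a$ for all $e$, that is, precisely when $e_{C,b}=0$ for every $b<a$, equivalently $e_{C,(a)}=\sum_{b\geq a}e_{C,b}=1$. Hence the hypothesis of the corollary gives $R[G]'=R[G]e_{C,(a)}=R[G]$ and $H^2(C)'=H^2(C)$; in particular condition (i) of Theorem \ref{integrality} now reads $x\in R[G]\cdot\mathcal{D}(R[G])$, which holds for every $x\in\mathcal{D}(R[G])$, and condition (iii) is exactly the hypothesis of the corollary.

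Putting these together, Theorem \ref{integrality} yields a locally-free, locally-quadratic presentation $\Pi$ of $H^2(C)'=H^2(C)$, which is the presentation $\Pi$ referred to in the corollary, and for which, on setting $z=1$, one obtains $(\wedge_{j=1}^{j=a}\varphi_j)(\eta_{\mathcal{X}})\in\Fit^a_{R[G]'}(\Pi)=\Fit^a_{R[G]}(\Pi)$ together with $x\cdot(\wedge_{j=1}^{j=a}\varphi_j)(\eta_{\mathcal{X}})\in{\rm Ann}_{R[G]}((Y_\pi)_{\rm tor})$; these are the two assertions of the corollary. I expect the only point requiring any care to be the translation carried out in the previous paragraph --- that the requirement that $\calF\otimes_RH^2(C)$ contain a free submodule of rank $a$ coincides with the equality $e_{C,(a)}=1$; granting this, the corollary is a purely formal specialisation of Theorem \ref{integrality} that requires no further computation.
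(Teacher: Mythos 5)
Your argument is correct and matches the paper's own derivation: the paper obtains the corollary from Theorem \ref{integrality} precisely by observing that the Gorenstein property of $R[G]$ (Example \ref{dual exam}) forces $\Ext^2_{R[G]}(H^2(C),R[G])$ to vanish, so that $z=1$ is an admissible choice, while the free-submodule hypothesis is the implicit guarantee that $e_{C,(a)}=1$, hence $R[G]'=R[G]$, so that every $x\in\mathcal{D}(R[G])$ satisfies condition (i) and the two conclusions of the theorem specialise to those of the corollary. The one point you flag yourself --- that translating the hypothesis into $e_{C,(a)}=1$ tacitly uses that each component $eH^2(C)_\calF$ is free over $\calF[G]e$, as in Definition \ref{eca} --- is left equally implicit in the paper's `immediate consequence' assertion, so your write-up is if anything slightly more explicit than the paper's own argument.
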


The following additional consequence of Theorem \ref{integrality} will also, in the sequel, lead to relatively simpler statements in our arithmetic applications.

\begin{corollary}\label{cor2} In the notation and setting of Theorem \ref{integrality}, and for any $$y\in(R[G]\cap R[G]'),$$ one has 
$${\rm nr}_{\calF[G]'}(y)^{2a}\cdot(\wedge_{j=1}^{j=a}\varphi_j)(\eta_{\mathcal{X}})\in{\rm Fit}_{R[G]'}^a(\Pi)$$
and
$$ x\cdot{\rm nr}_{\calF[G]'}(y)^{2a}\cdot(\wedge_{j=1}^{j=a}\varphi_j)(\eta_{\mathcal{X}})\in{\rm Ann}_{R[G]}((Y_{\pi})_{\rm tor}).$$
\end{corollary}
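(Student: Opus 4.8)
The plan is to deduce Corollary~\ref{cor2} directly from Theorem~\ref{integrality} by applying the latter with a cleverly chosen element $z$. Since $y$ lies in $R[G]\cap R[G]'$, its image in $R[G]'$ is a central-ish element whose relevant multiple will annihilate the $\Ext^2$ term; more precisely, I would take $z$ in Theorem~\ref{integrality}(ii) to be built from $y$ so that ${\rm nr}_{\calF[G]'}(z)^a$ becomes ${\rm nr}_{\calF[G]'}(y)^{2a}$. The cleanest route is as follows. First I would recall that $R[G]$ (and hence $R[G]'=R[G]e_{C,(a)}$) is Gorenstein with respect to the anti-involution $\iota_\#$, as noted in Example~\ref{dual exam} and in the discussion preceding Corollary~\ref{integralitycor}; this already tells us $\Ext^2_{R[G]'}(H^2(C)',R[G]')=0$, so that \emph{every} element of $R[G]'$ — in particular the image of $y^2$ — lies in ${\rm Ann}_{R[G]'}(\Ext^2_{R[G]'}(H^2(C)',R[G]'))=R[G]'$. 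Hence $z:=y^2$ (viewed in $R[G]'$) is an admissible choice of the datum (ii) in Theorem~\ref{integrality}.

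With this choice, Theorem~\ref{integrality} applied to the data $(C,t,\mathcal{L},\pi,\mathcal{X})$, the given $x$, the element $z=y^2$, and the given homomorphisms $\varphi_1,\dots,\varphi_a$ yields immediately
\begin{equation*}
{\rm nr}_{\calF[G]'}(y^2)^a\cdot(\wedge_{j=1}^{j=a}\varphi_j)(\eta_{\mathcal{X}})\in{\rm Fit}_{R[G]'}^a(\Pi)
\end{equation*}
and
\begin{equation*}
x\cdot{\rm nr}_{\calF[G]'}(y^2)^a\cdot(\wedge_{j=1}^{j=a}\varphi_j)(\eta_{\mathcal{X}})\in{\rm Ann}_{R[G]}((Y_{\pi})_{\rm tor}).
\end{equation*}
Since the reduced norm is multiplicative, ${\rm nr}_{\calF[G]'}(y^2)={\rm nr}_{\calF[G]'}(y)^2$, and therefore ${\rm nr}_{\calF[G]'}(y^2)^a={\rm nr}_{\calF[G]'}(y)^{2a}$. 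Substituting this identity into the two displayed containments gives exactly the two assertions of Corollary~\ref{cor2}. The presentation $\Pi$ is the same locally-free, locally-quadratic presentation of $H^2(C)'$ produced by Theorem~\ref{integrality}.

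The only point requiring a moment's care — and the closest thing to an obstacle — is the bookkeeping about which ring the element $y$ (and hence $y^2$) is being viewed in: $y$ is given in $R[G]\cap R[G]'$, so its natural image in $R[G]'$ makes sense, and one must check that ${\rm nr}_{\calF[G]'}$ is being applied consistently (i.e. that the reduced norm of the image of $y$ in $\calF[G]'$ is the quantity appearing in the statement). This is purely formal: the inclusion $R[G]'\subseteq\calF[G]'$ and the compatibility of reduced norms with this inclusion are exactly as used in Theorem~\ref{integrality} itself. One should also note that the hypothesis $z\in{\rm Ann}_{R[G]'}(\Ext^2_{R[G]'}(H^2(C)',R[G]'))$ is vacuously satisfied here precisely because the Gorenstein property forces that $\Ext^2$ group to vanish, so no genuine constraint on $y$ is needed beyond $y\in R[G]\cap R[G]'$. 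Thus the proof is a two-line reduction, and I would present it as such.
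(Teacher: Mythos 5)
Your proposal has a genuine gap. The central claim — that $\Ext^2_{R[G]'}(H^2(C)',R[G]')=0$ — is not correct. The Gorenstein property established in Example~\ref{dual exam} and invoked before Corollary~\ref{integralitycor} applies to $R[G]$ (and, more generally, to $R[G]e$ for idempotents $e$ that lie in $Z(R[G])$ and are fixed by $\iota_\#$). But $R[G]'=R[G]e_{C,(a)}$ is built from $e_{C,(a)}$, which is only guaranteed to be a central idempotent of the \emph{rational} algebra $A=\calF[G]$; it need not lie in $R[G]$ at all. When $e_{C,(a)}\notin R[G]$, the ring $R[G]'$ is a proper quotient of $R[G]$ and is in general not Gorenstein, so one cannot conclude that $\Ext^2_{R[G]'}(-,R[G]')$ vanishes. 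Indeed, if this Ext group always vanished, one could apply Theorem~\ref{integrality} with $z=1$ for any $C$, making the factor ${\rm nr}_{\calF[G]'}(y)^{2a}$ in Corollary~\ref{cor2} entirely superfluous; the fact that the statement retains this factor is a tip-off that the Ext group can be nonzero. The situation in which it does vanish is exactly the hypothesis of Corollary~\ref{integralitycor}, which forces $e_{C,(a)}=1$.

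The argument the paper actually runs (and which your choice $z=y^2$ does correctly anticipate) is to \emph{prove}, rather than assume, that $y^2$ annihilates $\Ext^2_{R[G]'}(H^2(C)',R[G]')$. Writing $R[G]^\dagger$ for the kernel of the surjection $R[G]\to R[G]'$, one has $y\cdot R[G]^\dagger=0$ since $y\in R[G]\cap R[G]'$. Using the base-change spectral sequence $\Ext^p_{R[G]'}\bigl(M,\Ext^q_{R[G]}(R[G]',R[G]')\bigr)\Rightarrow\Ext^{p+q}_{R[G]}(M,R[G]')$ with $M=H^2(C)'$, its low-degree exact sequence exhibits $\Ext^2_{R[G]'}(M,R[G]')$ as squeezed between a quotient of $\Hom_{R[G]}\bigl(M,\Ext^1_{R[G]}(R[G]',R[G]')\bigr)$ and a submodule of $\Ext^2_{R[G]}(M,R[G]')$. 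Applying $\Ext^\bullet_{R[G]}(-,R[G]')$ and $\Ext^\bullet_{R[G]}(M,-)$ to $0\to R[G]^\dagger\to R[G]\to R[G]'\to0$, and using the Gorenstein property of $R[G]$ (not $R[G]'$) to kill $\Ext^{\ge2}_{R[G]}(M,R[G])$, one finds that both $\Ext^1_{R[G]}(R[G]',R[G]')$ and $\Ext^2_{R[G]}(M,R[G]')$ are annihilated by $y$ because they are subquotients of modules built from $R[G]^\dagger$. It follows that $y^2$ annihilates $\Ext^2_{R[G]'}(M,R[G]')$, so $z:=y^2$ is a legitimate datum for Theorem~\ref{integrality}, and the remainder of your deduction (multiplicativity of the reduced norm, ${\rm nr}(y^2)^a={\rm nr}(y)^{2a}$) then goes through exactly as you wrote it. So the choice of $z$ and the final bookkeeping are fine; what is missing is the spectral-sequence argument in the middle, and what is wrong is the assertion that the relevant Ext group vanishes.
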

\begin{proof}
We set $M:=H^2(C)'$. To deduce the claimed result from Theorem \ref{integrality}, it is then enough to show that $y^2$ annihilates ${\rm Ext}^2_{R[G]'}(M,R[G]')$. We adapt an argument used in the proof of \cite[Thm. 8.6]{rbsd}.

To do this we use the existence of a convergent first quadrant cohomological spectral sequence
\[ E_2^{pq} = {\rm Ext}_{R[G]'}^p(M,{\rm Ext}^q_{R[G]}(R[G]',R[G]')) \Rightarrow {\rm Ext}^{p+q}_{R[G]}(M,R[G]')\]
(cf. \cite[Exer. 5.6.3]{weibel}).

In particular, since the long exact sequence of low degree terms of this spectral sequence gives an exact sequence of $R[G]$-modules
\[ \Hom_{R[G]}(M,{\rm Ext}^1_{R[G]}(R[G]',R[G]')) \to {\rm Ext}_{R[G]'}^2(M,R[G]') \to {\rm Ext}^{2}_{R[G]}(M,R[G]'),\]
we find that it is enough to show that the element $y$ annihilates both ${\rm Ext}^1_{R[G]}(R[G]',R[G]')$ and ${\rm Ext}^{2}_{R[G]}(M,R[G]')$.

To verify this we write $R[G]^\dagger$ for the ideal $\{x \in R[G]: x\cdot e_{C,(a)} = 0\}$ so that there is a natural short exact sequence of $R[G]$-modules $0 \to R[G]^\dagger \to R[G] \to R[G]' \to 0$.

Then by applying the exact functor ${\rm Ext}^\bullet_{R[G]}(-,R[G]')$ to this sequence one obtains a surjective homomorphism
\[ \Hom_{R[G]}(R[G]^\dagger,R[G]') \twoheadrightarrow {\rm Ext}^1_{R[G]}(R[G]',R[G]').\]

In addition, since as explained in Example \ref{dual exam}, $R[G]$ is Gorenstein with respect to the anti-involution $\iota_\#$ that satisfies $\iota_\#(g) = g^{-1}$ for all $g$ in $G$, by applying the exact functor ${\rm Ext}^{\bullet}_{R[G]}(M,-)$ to the above sequence one finds that there is a natural isomorphism
\[ {\rm Ext}^{3}_{R[G]}(M,R[G]^\dagger) \cong {\rm Ext}^{2}_{R[G]}(M,R[G]').\]

To complete the proof it is thus enough to note that the left hand modules in both of the last two displays are annihilated by $y$ since the definition of $R[G]^\dagger$ implies immediately that $y\cdot R[G]^\dagger = 0$.

\end{proof}

\subsection{Reduction results}

In order to prove Theorem \ref{integrality} in the next section, we begin by making the following helpful reductions.

\begin{lemma}\label{dvr} It is enough to prove Theorem \ref{integrality} in the case that $R$ is a discrete valuation ring.\end{lemma}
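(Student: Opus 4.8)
The assertion is that in proving Theorem \ref{integrality} one may assume $R$ is a discrete valuation ring. Since $R$ is a Dedekind domain of characteristic zero, the natural strategy is a local--global principle: all the invariants appearing in the statement (the Fitting invariant $\Fit^a_{R[G]'}(\Pi)$, the annihilator ${\rm Ann}_{R[G]}((Y_\pi)_{\rm tor})$, the Whitehead order $\xi(R[G])$, and the ideal of denominators $\mathcal{D}(R[G])$) are, by their very definitions recalled in \S\ref{chelomac}, obtained as intersections over the prime ideals $\mathfrak p$ of $R$ of the corresponding invariants formed over the localisations $R_{(\mathfrak p)}$. Hence the plan is to fix an arbitrary nonzero prime $\mathfrak p$ of $R$, to show that each piece of data $(C,t,\mathcal L,\pi,\mathcal X)$ and each auxiliary choice $(x,z,\varphi_\bullet)$ localises to data of the same type over $R_{(\mathfrak p)}$, and to check that the localisation of the higher special element $\eta_{\mathcal X}$ (which lives in a reduced exterior power over the semisimple algebra $\calF[G]$, unaffected by localisation of $R$) is the higher special element attached to the localised data; then the desired containments over $R$ follow by intersecting the containments over each $R_{(\mathfrak p)}$.

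\textbf{Key steps, in order.} First I would record that localisation at $\mathfrak p$ is exact and commutes with the formation of $\calF=\operatorname{Frac}(R)=\operatorname{Frac}(R_{(\mathfrak p)})$, so that $\calF[G]'=\calF[G]e_{C,(a)}$ and all reduced norms and reduced exterior powers over $\calF[G]'$ are literally unchanged; in particular $\eta_{\mathcal X}$ and $(\wedge_{j=1}^{j=a}\varphi_j)(\eta_{\mathcal X})$ are the same element of $Z(\calF[G]_Ee_\pi)$ whether computed over $R$ or over $R_{(\mathfrak p)}$. Second, I would verify that $R_{(\mathfrak p)}\otimes_R C$ lies in $D^{\rm a}(R_{(\mathfrak p)}[G])$: conditions (ad$_1$) and (ad$_3$) are preserved by the exact functor $R_{(\mathfrak p)}\otimes_R(-)$, (ad$_2$) is a statement about $\calF[G]$ and is unchanged, and (ad$_4$) follows since localisation of an $R$-torsion-free module stays torsion-free; moreover the idempotents $e_0(C)$, $e_{C,a}$, $e_\pi$ depend only on the $\calF[G]$-modules $H^i(C)_\calF$ and so are unchanged. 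Third, since $\calF\otimes_{R_{(\mathfrak p)}}(R_{(\mathfrak p)}\otimes_R C)=C_\calF$, the trivialisation $t$ and the characteristic element $\mathcal L$ serve verbatim as a trivialisation and characteristic element of the localised complex (the refined Euler characteristic and the boundary map $\delta_{R_{(\mathfrak p)}[G],E}$ being compatible with the $R$-version via the localisation map on relative $K_0$); hence $\eta_{\mathcal X}$ is the higher special element of the localised data. Fourth, I would use Lemma \ref{BSlemma}-type bookkeeping: by \cite[Lem.\ 3.2]{nagm} one has $\xi(R[G])_{(\mathfrak p)}=\xi(R_{(\mathfrak p)}[G])$, and analogously $\mathcal D(R[G])_{(\mathfrak p)}=\mathcal D(R_{(\mathfrak p)}[G])$, so that the data $(x,z,\varphi_\bullet)$ required over $R$ localises to data of the prescribed form over $R_{(\mathfrak p)}$. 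Finally, granting Theorem \ref{integrality} over each $R_{(\mathfrak p)}$, one gets a locally-free, locally-quadratic presentation $\Pi_{(\mathfrak p)}$ of $H^2(C)'_{(\mathfrak p)}$ for each $\mathfrak p$; these glue (or rather, one fixes a single locally-free locally-quadratic presentation $\Pi$ of the $R[G]'$-module $H^2(C)'$ whose localisations realise the local presentations, using that over a Dedekind domain a finitely generated module with all localisations admitting quadratic presentations admits a locally-quadratic one) so that $\Fit^a_{R_{(\mathfrak p)}[G]'}(\Pi_{(\mathfrak p)})=\Fit^a_{R[G]'}(\Pi)_{(\mathfrak p)}$, and the two containments over $R$ drop out by taking $\bigcap_{\mathfrak p}$.

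\textbf{Main obstacle.} The one genuinely delicate point is the last one: producing a \emph{single} locally-free, locally-quadratic presentation $\Pi$ over $R[G]'$ whose localisations agree with (or at least have Fitting invariants compatible with) the presentations produced by the DVR case, rather than merely a family of unrelated local presentations. I expect this to be handled exactly as in the Burns--Sano framework --- one takes a locally-free presentation of $H^2(C)'$ coming from a representative of $C$ (whose existence over a Dedekind domain uses that $C$ is perfect, so locally free resolutions exist, together with (ad$_2$)/(ad$_3$) to force the presentation to be locally quadratic) and then observes that the \emph{total} Fitting invariant and hence $\Fit^a$ are insensitive, up to the intersection over $\mathfrak p$, to the particular choice --- so the reduction goes through. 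Everything else is routine exactness-of-localisation bookkeeping, and I would not belabour it in the write-up beyond citing \cite[Lem.\ 3.2]{nagm} and the relevant definitions of \S\ref{chelomac}.
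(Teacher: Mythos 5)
Your proposal is correct and takes essentially the same approach as the paper, which disposes of this lemma in a single sentence: the reduction is ``clear from the definition of a locally-free presentation, and of the associated higher Fitting invariants, in terms of the localised rings $R_{(\p)}[G]$.'' Your more detailed write-up simply unpacks that observation, and the point you flag as delicate (producing one global locally-free presentation rather than unrelated local ones) is already absorbed by the definition $\Fit^a_\A(\Pi) = \bigcap_\p \Fit^a_{\A_{(\p)}}(\Pi_{(\p)})$, which is applied to a single $\Pi$ chosen globally before localising.
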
\begin{proof} This is clear from the definition of a locally-free presentation, and of the associated higher Fitting invariants, in terms of the localised rings $R_{(\p)}[G]$ as $\p$ ranges over the prime ideals of $R$.\end{proof}

\begin{lemma}\label{ranka} Assume that $R$ is a discrete valuation ring. Then it is enough to prove Theorem \ref{integrality} in the case that the image of $\mathcal{X}$ in $Y_{\pi,\calF}'$ generates a free $R[G]'$-module of rank $a$.\end{lemma}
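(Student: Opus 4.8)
The plan is to reduce the statement to the case where the finite ordered set $\mathcal{X}$ maps to an honest $R[G]'$-basis of a free direct summand of $Y_{\pi,\calF}'$, and then invoke the multilinearity properties of reduced exterior powers. First I would note that by Lemma \ref{dvr} we may already assume $R$ is a discrete valuation ring, so all locally-free modules in sight are genuinely free and the Fitting invariants are computed from a single presentation. The element $\eta_{\mathcal{X}}$ is built from the wedge $\wedge_{x\in\mathcal{X}}x$ inside $e_\pi\cdot e_{C,a}\bigwedge_{A}^{a}Y_{\pi,\calF}$; by the defining identity \eqref{2.6} for reduced exterior powers, this wedge (and hence $\eta_{\mathcal{X}}$, and hence $(\wedge_{j=1}^{j=a}\varphi_j)(\eta_{\mathcal{X}})$) depends on $\mathcal{X}$ only through the $d\times a$ matrix of coordinates of the elements of $\mathcal{X}$ with respect to a chosen $A$-basis of $Y_{\pi,\calF}'$.

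The key observation is then a standard linear-algebra move over the order $R[G]'$: any $a$-tuple of elements of the free module $Y_{\pi}' := R[G]e_{C,(a)}\otimes_{R[G]}Y_\pi$ can be written as an $R[G]'$-linear combination of the elements of a fixed basis, i.e. there is a matrix $B\in M_{d',a}(R[G]')$ expressing the images of $\mathcal{X}$ in terms of that basis (here $d'$ is the $R[G]'$-rank of $Y_\pi'$, which after the further reduction of Lemma \ref{ranka} we may take to equal $a$, so that $B$ becomes square). The image of $\eta_{\mathcal{X}}$ under $\wedge_{j=1}^{j=a}\varphi_j$ will then differ from the corresponding quantity for the standard basis tuple by a factor of ${\rm nr}_{\calF[G]'}(B)$; since $B$ has entries in $R[G]'$, that reduced norm lies in $\xi(R[G]')$, and multiplying the $a$-th Fitting invariant by an element of $\xi(R[G]')$ keeps us inside the same $a$-th Fitting invariant. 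For the annihilator statement one argues the same way, noting that ${\rm nr}_{\calF[G]'}(B)$ also multiplies $Y_\pi'$ into the submodule generated by the standard-basis image. Thus both displayed containments for a general $\mathcal{X}$ follow from the corresponding containments in the ``free of rank $a$'' case, and after choosing such a basis we are reduced to the situation where the image of $\mathcal{X}$ generates a free $R[G]'$-module of rank $a$.

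The main obstacle I anticipate is purely bookkeeping rather than conceptual: one must be careful that the idempotent cutoff $e_\pi\cdot e_{C,a}$ interacts correctly with this change of tuple, since the rank-$a$-freeness hypothesis on $e H^2(C)_\calF$ for each primitive idempotent $e$ dividing $e_{C,(a)}$ is exactly what guarantees the matrix $B$ exists with the right rank after localising and reducing, and one needs the ``only if the set is a basis does the wedge survive'' phenomenon (exactly as in the proof of Lemma \ref{rationality}) to dispose of the components where $\mathcal{X}$ degenerates. In short: one reduces to a DVR, then to the free-rank-$a$ case by a multilinear change-of-basis argument controlled by a reduced norm lying in $\xi(R[G]')$, after which the substantive work — producing the presentation $\Pi$ and proving the containments — is carried out in the next section via the organising matrix $\Phi$ of Proposition \ref{almost there} and Corollary \ref{omaccor}.
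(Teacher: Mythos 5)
Your change-of-basis strategy breaks down at the integral level, and the gap is exactly the issue that this lemma exists to address. You write that ``any $a$-tuple of elements of the free module $Y_\pi' := R[G]e_{C,(a)}\otimes_{R[G]}Y_\pi$ can be written as an $R[G]'$-linear combination of the elements of a fixed basis''. But $Y_\pi'$ is not assumed to be a free $R[G]'$-module --- $Y_\pi$ is just some quotient of $H^2(C)$ --- so it need not have an $R[G]'$-basis at all, and even when a free rank-$a$ submodule $\langle \mathcal{Y}\rangle$ exists, the elements of an arbitrary $\mathcal{X}\subseteq Y_\pi$ need not lie in it. The obstruction is therefore not the ``bookkeeping'' you anticipate with the idempotent cutoff, but the integral structure of $Y_\pi'$ itself. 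There is also a circularity: you want to take the matrix $B$ square by invoking ``the further reduction of Lemma \ref{ranka}'', but that is what is being proved. And for non-square $B$ the relation between $\wedge_i x_i$ and $\wedge_j y_j$ is not given by a single reduced norm $\nr_{\calF[G]'}(B)$ but by a combination of $a\times a$ minors, so the clean ``factor of $\nr(B)$'' identity is not available.

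The paper's proof takes a genuinely different and essential route: a $p$-adic density argument. One fixes an auxiliary tuple $\mathcal{Y}$ whose image does generate a free rank-$a$ $R[G]'$-module, and for large $N$ the perturbed tuple $\mathcal{Y}_N := \{x_i + p^N y_i\}$ still has this property (``being separable'' is $p$-adically open, by the argument of \cite[Lem. 3.16]{hse}). Multilinearity of reduced exterior powers then shows that $(\wedge_j\varphi_j)(\eta_{\mathcal{X}}) - (\wedge_j\varphi_j)(\eta_{\mathcal{Y}_N})$ lies in $p^N$ times a fixed $R$-lattice $L''$, independent of $N$. Because $(Y_\pi')_{\rm tor}$ is finite and because $\Fit^a_{R[G]'}(\Pi)$ spans the relevant component of $Z(\calF[G]')$ after inverting $p$ (this is the last, and nontrivial, computation in the paper's proof: one exhibits a matrix $B$ agreeing with the presentation matrix of $\Pi$ in all but $a$ columns whose reduced norm is a unit), the target ideal ${\rm Ann}$ and Fitting invariant are $p$-adically open, and for $N$ large enough the error term $p^N L''$ is absorbed. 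No change of basis is made. This is exactly what allows the theorem to apply to arbitrary tuples $\mathcal{X}$ rather than only ``separable'' ones, which the introduction identifies as the novel feature of the construction; your argument, if it worked, would render that generality vacuous.
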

\begin{proof} Let $p$ denote the residue characteristic of $R$. We set $e_{\pi,a}:=e_\pi\cdot e_{C,a}$.

We label the elements of $\mathcal{X}$ as $\{x_i\}_{1\leq i\leq a}$ and fix an ordered subset $\mathcal{Y}:=\{y_i\}_{1\leq i\leq a}$ of $Y_\pi$, of cardinality $a$, that generates a free $R[G]'$-submodule of rank $a$ of $Y_{\pi,\calF}'$. By the argument of \cite[Lem. 3.16]{hse}, for any large enough integer $N$ the set $$\mathcal{Y}_N:=\{x_i+p^Ny_i\}_{1\leq i\leq a}$$ generates a free $R[G]'$-submodule of rank $a$ of $Y_{\pi,\calF}'$. For any such integer $N$ we set $\eta_N:=\eta_{\mathcal{Y}_N}$.

Now, in each simple component $\bigwedge_{\calF[G]e}^aeY_{\pi,\calF}$ of $e_{\pi,a}\bigwedge_{\calF[G]}^aY_{\pi,\calF}$, the difference
\begin{equation}\label{difference}\wedge_{i=1}^{i=a}(x_i+p^Ny_i)-\wedge_{i=1}^{i=a}x_i\end{equation} belongs to $p^N\cdot L_e$ where $L_e$ is the $R$-sublattice of $\bigwedge_{\calF[G]e}^aeY_{\pi,\calF}$ generated by $$Z:=\{\wedge_{i=1}^{i=a}z_i:z_i\in\{x_i,y_i\}\}.$$

In fact, in $e_{\pi,a}\bigwedge_{\calF[G]}^aY_{\pi,\calF}$ the difference (\ref{difference}) therefore belongs to $p^N\cdot L$ where $L$ is the $R$-sublattice generated by $Z$. Defining an $R$-sublattice $$L':=e_{\pi,a}\cdot\calL\cdot(t_\pi^a)^{-1}(L)$$ of $e_{\pi,a}\bigwedge_{\calF[G]}^aH^1(C)_\calF$ one then finds that $$\eta_N-\eta\in p^N\cdot L'.$$

We next define an $R$-sublattice $$L'':=\{(\wedge_{j=1}^{j=a}\varphi_j)(L'):\varphi_\bullet\in\Hom_{R[G]}(H^1(C),R[G])^a\}$$ of $Z(\calF[G]e_{\pi,a})\subseteq Z(\calF[G]')$, so that
\begin{equation*}\{(\wedge_{j=1}^{j=a}\varphi_j)(\eta_N)-(\wedge_{j=1}^{j=a}\varphi_j)(\eta):\varphi_\bullet\in\Hom_{R[G]}(H^1(C),R[G])^a\} \subseteq p^N\cdot L''.\end{equation*}

We note that, for any large enough choice of $N$, one has \begin{align*}(R[G]\cap R[G]')\cdot\mathfrak{D}(R[G]')\cdot p^N\cdot L''\subseteq&(R[G]\cap R[G]')\cdot{\rm Ann}_{R[G]'}((Y'_\pi)_{\rm tor})\\ \subseteq&{\rm Ann}_{R[G]}((Y_\pi)_{\rm tor}),\end{align*}
where the first inclusion holds because $(Y'_\pi)_{\rm tor}$ is finite and the second inclusion follows from (\ref{intersection}) below.

It is now enough to prove that, for any large enough choice of $N$ and any free, quadratic $R[G]'$-presentation $\Pi$ of $H^2(C)'$ one has \begin{equation*}\label{enoughfit}p^N\cdot L''\subseteq\Fit_{R[G]'}^a(\Pi),
\end{equation*} and this would follow from the fact that $\Fit_{R[G]'}^a(\Pi)\cap Z(\calF[G]e_{\pi,a})$ has finite index in $Z(\calF[G]e_{\pi,a})$. It is thus enough to prove that \begin{equation*}\label{enoughfit}e_{\pi,a}\cdot(\calF\otimes_R\Fit_{R[G]'}^a(\Pi))=Z(\calF[G]e_{\pi,a})\end{equation*} and this is what we proceed to do.

The presentation $\Pi$ is of the form $$R[G]^{',d}\stackrel{\theta}{\to}R[G]^{',d}\to H^2(C)'\to0$$ and induces a free presentation of $\calF[G]e_{\pi,a}$-modules
$$(\calF[G]e_{\pi,a})^d\stackrel{e_{\pi,a}\cdot\theta_\calF}{\to}(\calF[G]e_{\pi,a})^d\stackrel{\varpi}{\to} e_{\pi,a}\cdot H^2(C)_\calF\to 0.$$ 
It is enough to prove that, for some matrix $B\in M_d(\calF[G]e_{\pi,a})$ that coincides with $M_{e_{\pi,a}\cdot\theta_\calF}$ in all but $a$ columns, the reduced norm of $B$ is a unit in $Z(\calF[G]e_{\pi,a})$. Here $$M_{e_{\pi,a}\cdot\theta_\calF}\in M_d(\calF[G]e_{\pi,a})$$ denotes the matrix of $e_{\pi,a}\cdot\theta_\calF$ with respect to any choice of $\calF[G]e_{\pi,a}$-bases.

But the $\calF[G]e_{\pi,a}$-module $e_{\pi,a}\cdot H^2(C)_\calF$ is free of rank $a$. Fixing a section to $\varpi$ induces a direct sum decomposition $$(\calF[G]e_{\pi,a})^d=\im(e_{\pi,a}\cdot\theta_\calF)\oplus e_{\pi,a}\cdot H^2(C)_\calF$$ where $\im(e_{\pi,a}\cdot\theta_\calF)$ is free of rank $d-a$. It is now clear that one may find bases with respect to which $M_{e_{\pi,a}\cdot\theta_\calF}$ is a block matrix of the form
$$\left(
\begin{array}{c|c}
\begin{array}{c}I_{d-a}\\ 0\end{array} & \star
\end{array}
\right).$$
The identity matrix in $M_d(\calF[G]e_{\pi,a})$ therefore coincides with 
such a choice $M_{e_{\pi,a}\cdot\theta_\calF}$ in all but the last $a$ columns, and is thus a valid choice of matrix $B$. This fact completes the proof.
\end{proof}

\subsection{The proof of Theorem \ref{integrality}}\label{mainproof}

In this section we complete the proof of Theorem \ref{integrality}.
In view of Lemmas \ref{dvr} and \ref{ranka}, in the rest of this proof we may and will assume that $R$ is a discrete valuation ring and that the image of $\mathcal{X}$ in $Y_{\pi,\calF}'$ generates a free $R[G]'$-module $X$ of rank $a$.

We consider the object $$C':=R[G]e_{(a)}\otimes_{R[G]}^{\mathbb{L}}C$$ of $D^{\rm p}(R[G]')$. We fix, as we may, an (injective) $R[G]'$-lift $\iota_2:X\to H^2(C)'=H^2(C')$ of the inclusion $X\subseteq Y_\pi'$ via $\pi'=e_{(a)}\pi$. We also fix any injective homomorphism $\iota_1:X\to H^1(C')$.

Since $R$ is a discrete valuation ring and $C$ is an admissible complex of $R[G]$-modules, a standard argument in homological algebra implies that one may fix a representative of $C$ of the form $$F^1\stackrel{\partial}{\to}F^2,$$ where $F^1$ and $F^2$ are finitely generated free $R[G]$-modules of the same rank and $F^1$ is placed in degree one.

We fix any such choice of explicit representative for $C$ and associate to it an explicit complex $$D:X\stackrel{0\oplus\iota_1}{\longrightarrow}X\oplus F^{1,'}\stackrel{(\iota_2,\partial')}{\longrightarrow}F^{2,'}$$ of $R[G]'$-modules. Here the first term is placed in degree zero, $\partial'=e_{(a)}\partial$ and, by abuse of notation, $\iota_1$ is the composition $$X\stackrel{\iota_1}{\to}H^1(C')=\ker(\partial')\subseteq F^{1,'}$$ while $\iota_2:X\to F^{2,'}$ is any lift of $\iota_2$ via the canonical surjection $F^{2,'}\to\cok(\partial')=H^2(C')$.

By \cite[Lem. 3.22]{hse}, modifying our given $a$-tuple of homomorphisms $\varphi_1,\ldots,\varphi_a$ by setting $\varphi'_j:=e_{(a)}\varphi_j$ gives a well-defined $a$-tuple of homomorphisms in $\Hom_{R[G]'}(H^1(C'),R[G]')$. After setting $\mathcal{X}=\{x_1,\ldots,x_a\}$ one then computes
\begin{align}\label{compute}\notag
(\wedge_{j=1}^{j=a}\varphi_j)(\eta_{\mathcal{X}})=&(\wedge_{j=1}^{j=a}\varphi'_j)(\eta_{\mathcal{X}})\\
=&\nr_{\calF[G]'}\left(\left(\varphi'_j(\iota_1(x_i))\right)_{1\leq i,j\leq a}\right)\cdot e_a\cdot\calL\cdot\nr_{E[G]'}(\iota_{1,E}^{-1}\circ e_{(a)}t\circ e_{(a)}\pi_E^{-1}).
\end{align}
This computation uses the equality (\ref{2.6}) and the result \cite[Lem. 4.13]{nagm}, as well as the linearity of the duality pairing (\ref{dualitypairing}).

We may now apply Corollary \ref{omaccor} to the complex of $R[G]'$-modules $D$, the $a$-tuple of homomorphisms $\varphi'_1,\ldots,\varphi'_a$, the element $z$ of $${\rm Ann}_{R[G]'}(\Ext^2_{R[G]'}(H^2(C)',R[G]'))\cong {\rm Ann}_{R[G]'}(\Ext^2_{R[G]'}(H^2(D),R[G]'))$$ and the characteristic element $\calL_D$ for $D$ provided by Lemma \ref{below} below. In order to do so, we fix $R[G]$-bases $b^1_\bullet$ and $b^2_\bullet$ of $F^1$ and $F^2$ respectively, write $\Delta$ for the matrix of $\partial$ as computed with respect to these bases and write $\Gamma$ for the matrix of $\iota_2:X\to F^{2,'}$ as computed with respect to $\mathcal{X}$ and $e_{(a)}\cdot b^2_\bullet$.

These results then combine with (\ref{compute}) to imply that there exists a canonical free presentation $\Pi_D$ of the $R[G]'$-module $H^2(D)=H^2(C)'/\iota_2(X)$ with the property that
\begin{equation}\label{fitintersection}\nr_{\calF[G]'}(z)^a\cdot(\wedge_{j=1}^{j=a}\varphi_j)(\eta_{\mathcal{X}})\in\left(\Fit_{R[G]'}^{0,{\rm tot}}(\Pi_D)\cap\Fit_{R[G]'}^a\left( 
\left(
\begin{array}{c|c}
\begin{array}{c}I_a\\ 0\end{array} & \begin{array}{c}\Gamma\\ e_{(a)}\Delta\end{array}
\end{array}
\right)
\right)\right).
\end{equation}

We next construct the free quadratic presentation $\Pi$ of $H^2(C)'$ that is claimed to exist in Theorem \ref{integrality}.
We define
$$\theta:X\oplus F^{1,'}\to X\oplus F^{2,'}$$by setting $$\theta(\alpha,\beta):=(\alpha,\iota_2(\alpha)+\partial'(\beta)).$$
If we then define a map $$\rho:X\oplus F^{2,'}\to \cok(\partial')=H^2(C)'$$ by setting $$\rho(\gamma,\delta):=(-\iota_2(\gamma)+\delta)+\im(\partial')$$ then the sequence
$$\Pi:X\oplus F^{1,'}\stackrel{\theta}{\to} X\oplus F^{2,'}\stackrel{\rho}{\to}H^2(C)'\to 0$$ is exact. It is in addition clear that
$$\Fit_{R[G]'}^a\left( 
\left(
\begin{array}{c|c}
\begin{array}{c}I_a\\ 0\end{array} & \begin{array}{c}\Gamma\\ e_{(a)}\Delta\end{array}
\end{array}
\right)
\right)=\Fit_{R[G]'}^a(\Pi).$$

The proof of Theorem \ref{integrality} is now completed upon combining (\ref{fitintersection}) with the following result.

\begin{lemma} For any free presentation $\Pi_D$ of the $R[G]'$-module $H^2(C)'/\iota_2(X)$ one has
$$(R[G]\cap R[G]')\cdot\mathcal{D}(R[G]')\cdot\Fit_{R[G]'}^{0,{\rm tot}}(\Pi_D)\subseteq{\rm Ann}_{R[G]}((Y_\pi)_{\rm tor}).$$
\end{lemma}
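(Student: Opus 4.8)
The plan is to relate the total Fitting invariant $\Fit_{R[G]'}^{0,{\rm tot}}(\Pi_D)$ of the $R[G]'$-module $H^2(C)'/\iota_2(X)$ to the annihilator of $(Y_\pi)_{\rm tor}$ in two stages: first passing from $R[G]'$-annihilators to $R[G]$-annihilators via the intersection formula alluded to as (\ref{intersection}), and second, using the Burns--Sano result Lemma \ref{BSlemma} to convert the total $0$-th Fitting invariant into an actual annihilator. First I would invoke Lemma \ref{BSlemma}, applied to the locally-free presentation $\Pi_D$ of the $R[G]'$-module $Z:=H^2(C)'/\iota_2(X)$, to obtain
$$\mathfrak{D}(R[G]')\cdot\Fit_{R[G]'}^{0,{\rm tot}}(\Pi_D)\subseteq{\rm Ann}_{R[G]'}\bigl(H^2(C)'/\iota_2(X)\bigr).$$
The key geometric input is that the surjection $\pi':H^2(C)'\to Y_\pi'$ (which is $e_{(a)}$ applied to $\pi$) sends $\iota_2(X)$ into $X\subseteq Y_\pi'$ with $X$ free of rank $a$ equal to the rank of $e_{C,(a)}H^2(C)_\calF$ in the relevant components; so the cokernel $Y_\pi'/X$ is a quotient of $H^2(C)'/\iota_2(X)$ up to a finite group, whence every $R[G]'$-annihilator of $H^2(C)'/\iota_2(X)$, after multiplication by the annihilator of that finite group (which is absorbed into $\mathfrak{D}(R[G]')$ or is trivial in the DVR-local reduced-rank situation of Lemma \ref{ranka}), annihilates $(Y_\pi')_{\rm tor}$. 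Thus
$$\mathfrak{D}(R[G]')\cdot\Fit_{R[G]'}^{0,{\rm tot}}(\Pi_D)\subseteq{\rm Ann}_{R[G]'}\bigl((Y_\pi')_{\rm tor}\bigr).$$

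The second stage is the descent from $R[G]'$ to $R[G]$. Here one uses that $R[G]'=R[G]e_{C,(a)}$ is a direct factor cut out by a central idempotent, and that $(Y_\pi')_{\rm tor}=e_{C,(a)}\cdot(Y_\pi)_{\rm tor}$. The complementary idempotent $1-e_{C,(a)}$ need not lie in $R[G]$, but the ideal $R[G]\cap R[G]'$ measures exactly how far $e_{C,(a)}$ is from being integral, and one has the inclusion
$$(R[G]\cap R[G]')\cdot{\rm Ann}_{R[G]'}\bigl((Y_\pi')_{\rm tor}\bigr)\subseteq{\rm Ann}_{R[G]}\bigl((Y_\pi)_{\rm tor}\bigr),$$
which is presumably the content of the displayed inclusion (\ref{intersection}) referenced in the proof of Lemma \ref{ranka}; the point is that for $y\in R[G]\cap R[G]'$ one has $y=y e_{C,(a)}$, so $y$ kills the part of $(Y_\pi)_{\rm tor}$ on which $e_{C,(a)}$ acts as zero, while $y\cdot{\rm Ann}_{R[G]'}((Y_\pi')_{\rm tor})$ kills the part on which $e_{C,(a)}$ acts as the identity. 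Combining the two stages, and noting $\mathcal{D}(R[G]')=\mathfrak{D}(R[G]')$ in the notation of Lemma \ref{BSlemma}, gives
$$(R[G]\cap R[G]')\cdot\mathcal{D}(R[G]')\cdot\Fit_{R[G]'}^{0,{\rm tot}}(\Pi_D)\subseteq(R[G]\cap R[G]')\cdot{\rm Ann}_{R[G]'}\bigl((Y_\pi')_{\rm tor}\bigr)\subseteq{\rm Ann}_{R[G]}\bigl((Y_\pi)_{\rm tor}\bigr),$$
as required.

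The main obstacle I anticipate is not either inclusion individually but rather making precise the identification of $(Y_\pi')_{\rm tor}$ with the torsion of the cokernel $Y_\pi'/X$ and bounding the discrepancy between $H^2(C)'/\iota_2(X)$ and $Y_\pi'/X$: the map $\iota_2$ is only a lift of the inclusion $X\subseteq Y_\pi'$ through $\pi'$, so one must check that the induced surjection $H^2(C)'/\iota_2(X)\twoheadrightarrow Y_\pi'/X$ has kernel $\ker(\pi')/(\ker(\pi')\cap\iota_2(X))$, a quotient of $\ker(\pi')=\ker(\pi)'$, and that this contributes nothing to the torsion annihilator beyond what $\mathcal{D}(R[G]')$ already supplies — or, more cleanly, that in the reduced situation of Lemma \ref{ranka} where $X$ has full rank $a$ in the relevant components, the relevant torsion lives entirely in the image and the argument goes through verbatim. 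One should also double-check that Lemma \ref{BSlemma} applies: $\Pi_D$ as produced by Corollary \ref{omaccor} is a free (hence locally-free) presentation of $H^2(D)=H^2(C)'/\iota_2(X)$, so this is immediate.
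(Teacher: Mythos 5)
Your proposal is correct and follows essentially the same three-step structure as the paper's own proof: Lemma \ref{BSlemma} converts the total Fitting invariant into an $R[G]'$-annihilator of $H^2(C)'/\iota_2(X)$; the induced surjection onto $Y_\pi'/X$ together with the $R[G]'$-freeness of $X$ passes this to an annihilator of $(Y_\pi')_{\rm tor}$; and the inclusion (\ref{intersection}) descends from $R[G]'$ to $R[G]$.

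One remark on your ``main obstacle,'' however, which is in fact a non-issue. The induced map $H^2(C)'/\iota_2(X)\to Y_\pi'/X$ is a genuine surjection (not merely ``up to a finite group''): $\pi'$ is surjective and $\pi'\circ\iota_2$ is the inclusion of $X$, so the composite $H^2(C)'\to Y_\pi'\to Y_\pi'/X$ factors through and surjects out of $H^2(C)'/\iota_2(X)$. For any surjection of modules $M\twoheadrightarrow N$ one has ${\rm Ann}(M)\subseteq{\rm Ann}(N)$ with no correction factor required, so there is nothing to absorb into $\mathcal{D}(R[G]')$ and no need to analyse $\ker(\pi')/(\ker(\pi')\cap\iota_2(X))$. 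The one genuinely necessary observation at this stage, which you gesture at but do not state crisply, is that $X$ being a \emph{free} $R[G]'$-module forces $(Y_\pi')_{\rm tor}\cap X=0$, so that $(Y_\pi')_{\rm tor}$ embeds into $Y_\pi'/X$ and the annihilator of the latter therefore annihilates the former. With that identification the rest of your argument, including your heuristic for (\ref{intersection}), matches the paper.
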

\begin{proof}One has that  $\mathcal{D}(R[G]')\cdot\Fit_{R[G]'}^{0,{\rm tot}}(\Pi_D)$ is contained in
${\rm Ann}_{R[G]'}(H^2(C)'/\iota_2(X))$ by Lemma \ref{BSlemma}. Since, by choice of $\iota_2$, the surjective map $\pi$ induces a surjection
$$H^2(C)'/\iota_2(X)=(R[G]'\otimes_{R[G]}H^2(C))/\iota_2(X)\to(R[G]'\otimes_{R[G]}Y_\pi)/X,$$ and since $X$ is a free $R[G]'$-module, we find that $\mathcal{D}(R[G]')\cdot\Fit_{R[G]'}^{0,{\rm tot}}(\Pi_D)$ is also contained in ${\rm Ann}_{R[G]'}((R[G]'\otimes_{R[G]}Y_\pi)_{\rm tor})$.

To conclude the proof, it is thus enough to note that \begin{equation}\label{intersection}(R[G]\cap R[G]')\cdot{\rm Ann}_{R[G]'}((R[G]'\otimes_{R[G]}Y_\pi)_{\rm tor})\subseteq{\rm Ann}_{R[G]}((Y_\pi)_{\rm tor}).\end{equation} Indeed, if we set $R[G]^\#:=R[G]\cap R[G](1-e_{(a)})$, the tautological exact sequence $0\to R[G]^\#\to R[G]\to R[G]'\to 0$ gives rise to an exact sequence of $R[G]$-modules $$R[G]^\#\otimes_{R[G]}Y_{\pi}\to Y_\pi\to R[G]'\otimes_{R[G]}Y_\pi\to 0.$$ The required inclusion then follows from the fact the first term in this sequence is annihilated by $R[G]\cap R[G]'$ and the fact that any element in the kernel of the induced map $$(Y_{\pi})_{\rm tor}\to(R[G]'\otimes_{R[G]}Y_\pi)_{\rm tor}$$ is contained in the image of this first term.
\end{proof}

We finally prove the intermediate result that was used in the course of the proof of Theorem \ref{integrality}

\begin{lemma}\label{below} There exists a characteristic element $\calL_D\in Z(E[G]')^\times$ for $D$ with the property that
$$e_a\cdot\calL_D=e_a\cdot\calL\cdot\nr_{E[G]'}(\iota_{1,E}^{-1}\circ e_{(a)}t\circ e_{(a)}\pi_E^{-1}).$$
\end{lemma}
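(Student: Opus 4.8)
The plan is to directly relate the refined Euler characteristic of $D$ to that of $C'$ via the explicit construction of $D$ as an "expansion" of (a representative of) $C'$. Recall that by construction $D$ is the three-term complex $X\xrightarrow{0\oplus\iota_1}X\oplus F^{1,'}\xrightarrow{(\iota_2,\partial')}F^{2,'}$, obtained from the two-term representative $F^{1,'}\xrightarrow{\partial'}F^{2,'}$ of $C'$ by adjoining the acyclic summand $X\xrightarrow{\id}X$ in degrees $0$ and $1$ (after twisting the degree-one copy of $X$ by the map $\iota_1$ into $F^{1,'}$) and by lifting $\iota_2$ to a map $X\to F^{2,'}$. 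A short computation with long exact sequences of cohomology shows that $H^1(D)\cong H^1(C')$ and there is a tautological exact sequence $0\to X\xrightarrow{\iota_2} H^2(C')\to H^2(D)\to 0$; in particular $e_a H^2(D)_\calF$ sits in an exact sequence with $e_aX_\calF$ and $e_a H^2(C')_\calF$, but over the idempotent $e_a$ the module $e_a H^2(C')_\calF$ is free of rank $a$ and $e_aX_\calF$ is free of rank $a$, so $e_a H^2(D)_\calF=0$, and similarly $e_a H^1(D)_\calF=0$. Hence the idempotent $e_0(D)$ defined in \S\ref{idempotents} satisfies $e_a\mid e_0(D)$, which is exactly what makes the formula in the statement meaningful.

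Next I would build the $E$-trivialisation of $D$ that produces $\calL_D$. Over $e_a$, the map $e_a\iota_{1,E}\colon e_aX_E\to e_aH^1(C')_E=e_aH^1(C)_E$ is an isomorphism (its source and target are both free of rank $a$ over $E[G]e_a$, and it is injective by hypothesis), and $e_a(\iota_2)_E\colon e_aX_E\to e_aH^2(C')_E$ is likewise an isomorphism. The trivialisation $t$ of $C$ (restricted via $e_{(a)}$ and then cut by $e_a$) gives an isomorphism $e_aH^2(C')_E\cong e_aH^1(C')_E$, and composing $e_a\iota_{1,E}^{-1}$, this, and $e_a(\iota_2)_E$, together with the identity on the adjoined summand $X$, yields a trivialisation $t_D$ of $D$ such that $e_a t_D$ is, up to the identity on the $X$-summands, the composite $e_a\iota_{1,E}^{-1}\circ e_{(a)}t\circ e_{(a)}\pi_E^{-1}$ appearing in the statement. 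The standard additivity of refined Euler characteristics with respect to the acyclic summand $X\xrightarrow{\id}X$ (which contributes $0$ to $\chi^{\rm ref}$) shows $\chi^{\rm ref}_{R[G]',E}(D,t_D)=\chi^{\rm ref}_{R[G]',E}(C',t')$ where $t'$ is the trivialisation of $C'$ obtained from $e_{(a)}t$ via $e_{(a)}\pi_E$; and $\chi^{\rm ref}_{R[G]',E}(C',t')$ differs from $e_{(a)}\cdot\chi^{\rm ref}_{R[G],E}(C,t)$ precisely by the image under $\delta_{R[G]',E}$ of $\nr_{E[G]'}$ applied to the change-of-trivialisation automorphism $e_{(a)}\pi_E^{-1}\circ(\text{inclusion})$ — this is the standard cocycle relation for refined Euler characteristics under change of trivialisation. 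Combining these identities and using $\delta_{R[G]',E}(\calL)=-\chi^{\rm ref}$, I obtain an element $\calL_D$ with $\delta_{R[G]',E}(\calL_D)=-\chi^{\rm ref}_{R[G]',E}(D,t_D)$, i.e. a characteristic element for $D$, and unwinding the cocycle relation over $e_a$ gives exactly $e_a\calL_D=e_a\calL\cdot\nr_{E[G]'}(\iota_{1,E}^{-1}\circ e_{(a)}t\circ e_{(a)}\pi_E^{-1})$.

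One technical point deserves care: $\calL$ is a characteristic element for $(C,t)$ as a complex of $R[G]$-modules, whereas $D$ lives over $R[G]'=R[G]e_{(a)}$, so one first passes to $C'=R[G]'\otimes^{\mathbb L}_{R[G]}C$ and notes that $e_{(a)}\calL$ is a characteristic element of $C'$ (the refined Euler characteristic is compatible with the exact base change along the idempotent $e_{(a)}$). The main obstacle is keeping the bookkeeping of the various idempotents $e_a\leq e_{(a)}\leq e_0(D)$ straight and verifying that the trivialisation $t_D$ one writes down is genuinely an $E[G]'$-isomorphism of the full even and odd cohomology of $D$ (not merely after cutting by $e_a$): on the complementary idempotent $e_0(D)-e_a$ one uses the differentials of $D$ themselves to trivialise, exactly as in the construction preceding (\ref{20})–(\ref{21}), and on $1-e_0(D)$ no trivialisation data is needed. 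Once this is set up, the computation of $\calL_D$ is the same additivity/cocycle manipulation used in \cite[Thm. 3.1]{omac}, restricted here to the degenerate situation where the relevant maps over $e_a$ are literally $\iota_1$, $\iota_2$ and $t$.
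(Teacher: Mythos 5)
Your high-level plan is the same as the paper's: relate $D$ to $C'$ through the canonical triangle (the paper writes it as $X[-2]\oplus X[-1]\to C'\to D\to X[-1]\oplus X[0]$), build compatible trivialisations, invoke additivity of refined Euler characteristics over that triangle, and then compute the discrepancy by a cocycle-type identity. However, several of the specific statements in your plan are incorrect in ways that would derail a careful execution.

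First, the cohomology of $D$ is not what you claim: taking cohomology of the triangle gives $H^1(D)\cong H^1(C')/\iota_1(X)$, not $H^1(D)\cong H^1(C')$, and $H^2(D)\cong H^2(C')/\iota_2(X)$ (the paper encodes this in the commutative diagram with rows $0\to X_E\to H^i(C')_E\to H^i(D)_E\to 0$ for $i=1,2$). Your later observation that $e_aH^1(D)_\calF=e_aH^2(D)_\calF=0$ happens to be correct in spite of the wrong identification, but the identification matters for constructing $t_3$.

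Second, and more seriously, your idempotent bookkeeping is inverted. You assert the chain $e_a\leq e_{(a)}\leq e_0(D)$, but in fact a primitive $e\leq e_{(a)}$ with $eH^2(C')_\calF$ of rank $b\geq a$ has $eH^1(D)_\calF$ of rank $b-a$, so $e$ annihilates $H^1(D)_\calF$ exactly when $b=a$. Hence $e_0(D)=e_a$, and your supposed intermediate piece $e_0(D)-e_a$ is zero while $1-e_0(D)=e_{(a)}-e_a$ is the genuinely nontrivial complement. Consequently your description of how to build $t_D$ is also inverted: on $e_0(D)=e_a$ nothing extra is needed (the relevant $E$-modules are zero), while on $e_{(a)}-e_a$ the cohomology is nonzero and one does need to carry a trivialisation --- the paper gets it from $t_2$ via the diagram, and then removes all dependence on that choice by invoking Lemma \ref{lam lemma} to pass from a characteristic element of $D_a=R[G]e_a\otimes^{\mathbb L}_{R[G]'}D$ to one of $D$. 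You never mention Lemma \ref{lam lemma}, which is the device that makes the off-$e_a$ ambiguity harmless, and without it your statement that ``no trivialisation data is needed'' on the complement is not justified.

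Third, the phrase ``together with the identity on the adjoined summand $X$'' in your construction of $t_D$ does not parse: $t_D$ must be an isomorphism $H^2(D)_E\to H^1(D)_E$, and neither of these groups contains $X$ as a summand (both are quotients of $H^i(C')_E$ by a copy of $X_E$). The paper instead builds $t_3$ as the map induced on these quotients by a pair $(t_1,t_2)$ filling in the commutative ladder, and then verifies the compatibility relation $(e_a\pi_E)^{-1}\circ e_at_1\circ e_a\iota_{1,E}^{-1}=e_at_2=e_at^{-1}$; this is the precise analogue of the composite you write down, but it is a map between copies of $X_E$, not an ``identity on an adjoined summand.'' With these corrections your plan would become the paper's proof; as written, steps (1)--(3) contain real errors.
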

\begin{proof}It is easy to see that there is a canonical exact triangle in $D^{\rm p}(R[G]')$ of the form
\begin{equation*}\label{key triangle} X[-2] \oplus X[-1] \xrightarrow{\iota} C'\to D \to X[-1] \oplus X[0]\end{equation*}
in which $H^1(\iota) = \iota_1$ and $H^2(\iota) = \iota_2$.% and the complex $D$ is acyclic outside degrees one and two and has cohomology groups in these degrees that respectively identify with the quotients $H^1(C')/X^1$ and $H^2(C')/\iota_2(X^2)$.

Since $t^{-1}$  induces an isomorphism of $E[G]$-modules between $e_a H^1(C)_E=e_a\iota_1(X)_E$ and $e_a H^2(C)_E= e_a \iota_2(X)_E$ we can fix a commutative diagram of $E[G]'$-modules
\begin{equation*}\label{key diagram} \begin{CD}
0 @> >> X_E @>\iota_{1,E}  >> H^1(C')_E @> >> H^1(D)_E @> >> 0\\
@. @V t_1 VV @V t_2 VV @V t_3 VV\\
0 @> >> X_E @> \iota_{2,E} >> H^2(C')_E @> >> H^2(D)_E @> >> 0\end{CD}\end{equation*}
where the maps $t_1$, $t_2$ and $t_3$ are bijective and satisfy
\begin{equation}\label{restrictions agree} (e_a\pi_E)^{-1}\circ e_a t_1\circ e_a\iota_{1,E}^{-1}=e_a(\iota_{2,E}\circ t_1\circ\iota_{1,E}^{-1})=e_at_2=e_at^{-1}.\end{equation}

We consider the object $$D_a:=R[G]e_a\otimes_{R[G]'}^{\mathbb{L}}D$$ of $D^{\rm p}(R[G]e_a)$. Then, since the definition of $e_a=e_{a,C}$ implies that, in the notation of \S \ref{idempotents}, it is equal to $e_0(D)=e_{D,0}$, Lemma \ref{lam lemma} implies that it is enough to prove that
\begin{equation}\label{enoughcharD}-\chi^{\rm ref}_{R[G]e_a,E}(D_a, e_at_3^{-1})=\delta_{R[G]e_a,E}(e_a\cdot\mathcal{L}\cdot \nr_{E[G]'}(\iota_{1,E}^{-1}\circ e_{(a)}t\circ e_{(a)}\pi_E^{-1}))\end{equation}
in $K_0(R[G]e_a,E[G]e_a)$.
Since one has
\[\chi^{\rm ref}_{R[G]',E}(C', t_2^{-1})=\chi^{\rm ref}_{R[G]',E}(D, t_3^{-1})+\chi^{\rm ref}_{R[G]',E}(X[-2] \oplus X[-1], t_1^{-1})\]
in $K_0(R[G]',E[G]')$, the required equality (\ref{enoughcharD}) now follows from the following explicit computation that uses (\ref{restrictions agree}):
\begin{align*}-\chi^{\rm ref}_{R[G]e_a,E}(D_a, e_at_3^{-1})
=&\chi^{\rm ref}_{R[G]e_a,E}(e_aX[-2] \oplus e_aX[-1], e_a\iota_{1,E}^{-1}\circ e_at\circ (e_a\pi_E)^{-1})\\
& -\chi^{\rm ref}_{R[G]e_a,E}(R[G]e_a\otimes_{R[G]}^{\mathbb{L}}C,e_at)\\
=&[e_aX,e_aX,e_a\iota_{1,E}^{-1}\circ e_at\circ (e_a\pi_E)^{-1}]+\delta_{R[G]e_a,E}(e_a\calL)\\ 
=& \delta_{R[G]e_a,E}(e_a\cdot\mathcal{L}\cdot \nr_{E[G]'}(\iota_{1,E}^{-1}\circ e_{(a)}t\circ e_{(a)}\pi_E^{-1})).\end{align*}

\end{proof}

\section{Selmer and Tate-Shafarevich groups and refined BSD conjectures}\label{ts}

In this section we extend the result \cite[Thm. 8.6]{rbsd} of Burns and the first author, concerning the Galois structures of Selmer and Tate-Shafarevich groups of abelian varieties, and their relations to the formulation of refined conjectures of Birch and Swinnerton-Dyer type, from the setting of abelian extensions of number fields to the setting of general Galois extensions.

In \S \ref{dihedralsect} we will then consider dihedral twists of elliptic curves over general number fields. By combining our general approach with a result of Mazur and Rubin \cite[Thm. B]{mr2} we are able to obtain strikingly explicit predictions for the first derivatives of Hasse-Weil-Artin $L$-series of such twists. In Example \ref{heegner} below we discuss how further specialisation leads to conjectural relationships between the arithmetic of `higher Heegner points' in (generalised) dihedral extensions of $\QQ$ and the Galois module structure of Tate-Shafarevich and Selmer groups.

\subsection{The general case}\label{8.1}

Let $F/k$ be a finite Galois extension of number fields with Galois group $G$. We set $n := [k:\QQ]$.

Let $A$ be an abelian variety of dimension $d$ defined over $k$. We write $A^t$ for the dual abelian variety and in general use the notation introduced in \S \ref{selmerexample} and in \S \ref{bkexample}. In particular, we always assume in the sequel that the Tate-Shafarevich group $\sha(A_F)$ of $A$ over $F$ is finite.

We recall that Burns and the first author have recently formulated a general `refined conjecture of Birch and Swinnerton-Dyer type' \cite[Conj. 3.3]{rbsd}, and that this conjecture decomposes naturally into $p$-primary components for each rational prime $p$. In the sequel we fix an odd prime number $p$ and refer to the $p$-primary component of this conjecture as ${\rm BSD}_p(A_{F/k})$. See Lemma 3.11 in loc. cit. for a precise  statement of the conjecture ${\rm BSD}_p(A_{F/k})$.

Throughout this section we give ourselves a fixed odd prime $p$ and an isomorphism of fields $\CC\cong\CC_p$ (that we will use to make certain implicit identifications but usually avoid mentioning explicitly).
We recall that the N\'eron-Tate height pairing for $A$ over $F$ combines with this isomorphism to induce a canonical isomorphism of $\CC_p[G]$-modules
\[ h_{A_{F/k}}: A^t(F)_{\CC_p} \cong \Hom_{\CC_p}( A(F)_{\CC_p},\CC_p) =  \Hom_{\ZZ_p[G]}(A(F)_p,\ZZ_p[G])_{\CC_p}.\]

The formulation of the conjecture ${\rm BSD}_p(A_{F/k})$ relies on fixing a finite set $S$ of places of $k$ as well as a basis $\omega_\bullet$ of global differentials. Its validity, however, is independent of these choices (see \cite[Rem. 3.9(i)]{rbsd}).

We thus fix a finite set $S$ of places of $k$ with
\begin{equation}\label{starSset} S_\infty(k)\cup S_p(k)\cup  S_{\rm ram}(F/k) \cup S_{\rm bad}(A)\subseteq S.\end{equation}

%We write $x\mapsto x^\#$ for the $\CC$-linear involution of $\CC[G]$ that inverts elements of $G$. We also set $n := [k:\QQ]$ and write $d$ for the dimension of $A$.

We also fix an ordered $k$-basis $\{\omega'_j: j \in [d]\}$ of $H^0(A^t,\Omega^1_{A^t})$ and we use this basis to define an explicit `classical period' $\Omega_A^{F/k}$ in $Z(\CC[G])^{\times}$ as in \cite[(42)]{rbsd}.

We also denote by $w_{F/k}$ the `root number' defined in \cite[(43)]{rbsd}.

%\subsection{Statement of the main result}\label{8.1}

In order to state the main result of this section we need to introduce some additional preliminary notation.

\subsubsection{Logarithmic resolvents}

We set $F_p:=\prod_{w\mid p}F_w$ and for each index $j$ we then write ${\rm log}_{\omega'_j}:=\prod_{w\mid p}{\rm log}_{F_w,\omega_j'}$ for the formal logarithm of $A^t$ over $F_p$ that is defined with respect to $\omega_j'$.

We also fix an ordering of the set $\Sigma(k)$ of embeddings $k\to\CC$. We write $\CC_p[G]^{nd}$ for the direct sum of $nd$ copies of $\CC_p[G]$ and fix a bijection between the standard basis of this module and the lexicographically-ordered direct product $\{1,\ldots,d\}\times \Sigma(k)$.

Then for any ordered subset
\[ x_\bullet:= \{x_{(i,\sigma)}: (i,\sigma) \in \{1,\ldots,d\}\times\Sigma(k)\}\]
of $A^t(F_p)^\wedge_p$ we define a logarithmic resolvent element of $Z(\CC_p[G])$ by setting
\[ \mathcal{LR}^p_{A^t_{F/k}}(x_\bullet) := {\rm nr}_{\QQ^c_p[G]}\left(\bigl(\sum_{g \in G} \hat \sigma(g^{-1}({\rm log}_{\omega_j'}(x_{(j',\sigma')})))\cdot g \bigr)_{(j,\sigma),(j',\sigma')}\right) \]
where the indices $(j,\sigma)$ and $(j',\sigma')$ run over $\{1,\ldots,d\}\times \Sigma(k)$, $\hat\sigma$ is the scalar extension to $F_p$ of a fixed extension to $F$ of $\sigma\in\Sigma(k)$, and ${\rm nr}_{\QQ^c_p[G]}(-)$ denotes the reduced norm of the given matrix in ${\rm M}_{dn}(\QQ_p^c[G])$.

\subsubsection{Higher derivatives of Hasse-Weil-Artin $L$-series}

For each $\psi\in{\rm Ir}(G)$ we write $L_S(A,\psi,z)$ for the Hasse-Weil-Artin $L$-function of $A$ and $\psi$, truncated by removing the Euler factors corresponding to places in $S$.

We always assume that this function has an analytic continuation to $z=1$, where it has a zero of order equal to the multiplicity $r(A,\psi)$ with which the character $\psi$ occurs in the representation $\CC\cdot A^t(F)$ of $G$, as is conjectured by Deligne and Gross (cf. \cite[p. 127]{delignegross}).

For each non-archimedean place $v$ of $k$ that does not ramify in $F/k$ and at which $A$ has good reduction we define an element of $Z(\QQ [G])$ by setting

\[ P_v(A_{F/k},1) := {\rm nr}_{\QQ_p[G]}\bigl(1-\Phi_v\cdot a_v +  \Phi_v^2\cdot {\rm N}v^{-2}\bigr).\]
Here $\Phi_v\in G$ denotes the Frobenius automorphism of (a fixed place of $F$ above) $v$ while ${\rm N}v$ denotes its absolute norm and $a_v$ is the integer $1 + {\rm N}v - |\tilde A(\kappa_v)|$ where $\tilde A$ is the reduction of $A$ to the residue field $\kappa_v$ of $v$.

For a non-negative integer $a$ we write ${\rm Ir}(G)_{A,(a)}$ for the subset of ${\rm Ir}(G)$ comprising characters $\psi$ for which $r(A,\psi)\geq a$. This definition ensures that the $Z(\CC[G])$-valued function
\[ L^{(a)}_{S}(A_{F/k},z) := \sum_{\psi \in {\rm Ir}(G)_{A,(a)}}z^{-a}L_S(A,\check\psi,z)\cdot e_\psi\]
is holomorphic at $z=1$.

\subsubsection{Idempotents}

For each $a$ we also define idempotents in $Z(\QQ[G])$ by setting
\[ e_{(a)} = e_{F,(a)} := \sum_{\psi \in {\rm Ir}(G)_{A,(a)}}e_\psi\]
and
\[ e_{a} = e_{F,a}:= \sum_{\psi \in {\rm Ir}(G)_{A,(a)}\setminus {\rm Ir}(G)_{A,(a+1)}}e_\psi\]
(so that $e_{(a)} = \sum_{b \ge a}e_b$). We also set $R_{(a)}:=\ZZ_p[G]e_{(a)}$ and $I_{(a)}:=\ZZ_p[G]\cap R_{(a)}$.

%
%For each non-negative integer $a$ this pairing combines with our fixed isomorphism of fields $\CC\cong \CC_p$ to induce an isomorphism of $\CC_p[G]$-modules

%\begin{equation}\label{athpower} {\rm ht}^{a}_{A_{F/k}}: \CC_p\cdot {\bigwedge}^a_{\ZZ_p[G]}\Hom_{\ZZ_p[G]}(A(F)_p,\ZZ_p[G]) \cong \CC_p\cdot{\bigwedge}^a_{\ZZ_p[G]} A^t(F)_p.\end{equation}

%In the following result we shall also use (the scalar extension of) the canonical `evaluation' pairing $${\bigwedge}^a_{\ZZ_p[G]} A^t(F)_p\times{\bigwedge}^a_{\ZZ_p[G]} \Hom_{\ZZ_p[G]}(A^t(F)_p,\ZZ_p[G])\to \ZZ_p[G]$$
%and write ${\rm Fit}^a_{\ZZ_p[G]}(\Sel_p(A_F)^\vee)$ for the $a$-th Fitting ideal of the $\ZZ_p[G]$-module $\Sel_p(A_F)^\vee$.

\subsubsection{The main result}
 % and a finite non-empty set $T$ of places of $k$ that is disjoint from $S$.%\cup S_p^k$.

%We also fix a non-negative integer $a$, as well as the following data:
%\begin{itemize}
%\item an element $\alpha$ of $$(\ZZ_p[G]\cap\ZZ_p[G]e_{(a)})\cdot\mathcal{D}(\ZZ_p[G]e_{(a)});$$
%\item an element $z$ of $${\rm Ann}_{\ZZ_p[G]e_{(a)}}({\rm Ext}^2_{\ZZ_p[G]e_{(a)}}(\ZZ_p[G]e_{(a)}\otimes_{\ZZ_p[G]}H^2({\rm SC}_S(A_{F/k};X)),\ZZ_p[G]e_{(a)}));$$
%\item an $a$-tuple $\{\theta_j: j \in [a]\}$ in $\Hom_{\ZZ_p[G]}(A^t(F)_p,\ZZ_p[G])$;
%\item an $a$-tuple $\{\phi_i: i\in [a]\}$ in $\Hom_{\ZZ_p[G]}(A(F)_p,\ZZ_p[G])$.
%\end{itemize}

The proof of this result will be given in \S\ref{proof of big conj} below.

\begin{theorem}\label{big conj} %Assume that $A(F)$ does not contain an element of order $p$.
Fix an ordered maximal subset $x_\bullet:= \{x_{(i,\sigma)}: (i,\sigma) \in \{1,\ldots,d\}\times\Sigma(k)\}$ of $A^t(F_p)^\wedge_p$ that is linearly independent over $\ZZ_p[G]$ and a finite non-empty set $T$ of places of $k$ that is disjoint from $S$. 

If the conjecture ${\rm BSD}_p(A_{F/k})$ of \cite{rbsd} is valid, then for any non-negative integer $a$, for
any $a$-tuples $\theta_\bullet$ in $\Hom_{\ZZ_p[G]}(A^t(F)_p,\ZZ_p[G])$ and $\phi_\bullet$ in $\Hom_{\ZZ_p[G]}(A(F)_p,\ZZ_p[G])$, any element $\alpha$ of $I_{(a)}\cdot\mathcal{D}(R_{(a)})$ and any element $y$ of $I_{(a)}$, the product
\begin{multline}\label{key product} \alpha
\cdot{\rm nr}_{\QQ_p[G]e_{(a)}}(y)^{2a}\cdot(\prod_{v \in T}\iota_\#(P_v(A_{F/k},1)))\\ \cdot  \frac{L^{(a)}_{S}(A_{F/k},1)}{\Omega_A^{F/k}\cdot w_{F/k}^d}\cdot \mathcal{LR}^p_{A^t_{F/k}}(x_\bullet)\cdot (\wedge_{j=1}^{j=a}\theta_j)({\rm ht}^{(a)}_{A_{F/k}}(\wedge_{i=1}^{i=a}\phi_i))\end{multline}
belongs to $\ZZ_p[G]$ and annihilates $\sha(A^t_{F})[p^\infty]$. 
\end{theorem}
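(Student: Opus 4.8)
The plan is to obtain Theorem~\ref{big conj} as an arithmetic specialisation of Corollary~\ref{cor2}, in which the complex, the trivialisation and (granting ${\rm BSD}_p(A_{F/k})$) the characteristic element are all supplied by the constructions of \cite{rbsd} recalled in \S\ref{selmerexample}.

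First I would fix the odd prime $p$ and the isomorphism $\CC\cong\CC_p$, and take $C$ to be the $p$-adic Nekov\'a\v r--Selmer complex ${\rm SC}_S(A_{F/k};\mathcal{X}_S(\omega_\bullet)(p))$ attached to the perfect Selmer structure $\mathcal{X}_S(\omega_\bullet)$ determined by the fixed set $S$ and basis $\omega'_\bullet$ (replacing $C$, if $A(F)$ has a point of order $p$, by the canonical modification lying in $D^{\rm a}(\ZZ_p[G])$ recalled after Proposition~\ref{prop:perfect2}). That proposition gives a $\ZZ_p[G]$-identification $H^1(C)\cong A^t(F)_p$ and a canonical surjection $\pi\colon H^2(C)\twoheadrightarrow X_\ZZ(A_F)_p$ with finite kernel; since $\ker(\pi)$ is finite one has $e_\pi=1$ in the sense of Definition~\ref{epi}, while $(Y_\pi)_{\rm tor}=(X_\ZZ(A_F)_p)_{\rm tor}=\sha(A_F)[p^\infty]^\vee$, which the Cassels--Tate pairing identifies $G$-equivariantly with $\sha(A^t_F)[p^\infty]$. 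A comparison of Wedderburn components, using the N\'eron--Tate isomorphism $A^t(F)_\QQ\cong\Hom_\QQ(A(F)_\QQ,\QQ)$, shows moreover that the idempotents $e_{C,(a)},e_{C,a}$ of Definition~\ref{eca} are respectively the idempotents $e_{(a)}=e_{F,(a)}$ and $e_a=e_{F,a}$ of \S\ref{8.1}, so that $\calF[G]'=\QQ_p[G]e_{(a)}$ in the notation of Theorem~\ref{integrality}.

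The key step is the second one: take $t\in\tau(C_{\CC_p})$ to be the trivialisation induced by the N\'eron--Tate height pairing (Example~\ref{trivialisationex}), under which $t$ becomes $h_{A_{F/k}}^{-1}$, and invoke the reformulation of ${\rm BSD}_p(A_{F/k})$ in \cite[Lem.~3.11]{rbsd}, according to which a characteristic element $\mathcal{L}$ for $(C,t)$ may be chosen so that
\[ e_a\cdot\mathcal{L}=\mathcal{L}_0:=\frac{L^{(a)}_{S}(A_{F/k},1)}{\Omega_A^{F/k}\cdot w_{F/k}^d}\cdot\Bigl(\prod_{v\in T}\iota_\#(P_v(A_{F/k},1))\Bigr)\cdot\mathcal{LR}^p_{A^t_{F/k}}(x_\bullet); \]
here the hypothesis that $x_\bullet$ is a maximal $\ZZ_p[G]$-linearly independent tuple ensures that the logarithmic resolvent $\mathcal{LR}^p_{A^t_{F/k}}(x_\bullet)$ — the $p$-adic regulator-type term comparing $A^t(F_p)^\wedge_p$ to a free $\ZZ_p[G]$-module — is non-degenerate on the $e_a$-component. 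Taking for $\mathcal{X}$ any lifts $\{x_1,\dots,x_a\}$ to $X_\ZZ(A_F)_p$ of $\phi_1,\dots,\phi_a$ under the canonical identification $\Hom_{\ZZ_p[G]}(A(F)_p,\ZZ_p[G])=\Hom_\ZZ(A(F),\ZZ)_p$, a direct unwinding of the definitions of \S\ref{defns} (using $e_\pi=1$ and that $(t_\pi^a)^{-1}$ is $\bigwedge^a h_{A_{F/k}}^{-1}$ over $\CC_p$) gives $\eta_{\mathcal{X}}=e_a\cdot\mathcal{L}\cdot{\rm ht}^{(a)}_{A_{F/k}}(\wedge_{i=1}^{i=a}\phi_i)$, whence, after pairing against $\theta_\bullet$ and using (\ref{2.6}), that $(\wedge_{j=1}^{j=a}\theta_j)(\eta_{\mathcal{X}})$ equals the product (\ref{key product}) with the factor $\alpha\cdot{\rm nr}_{\QQ_p[G]e_{(a)}}(y)^{2a}$ removed.

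Finally I would apply Corollary~\ref{cor2} with $R=\ZZ_p$, $\A=\ZZ_p[G]$, the tuple $\varphi_\bullet=\theta_\bullet$, the element $x=\alpha$ and the given $y$, to conclude that
\[ \alpha\cdot{\rm nr}_{\QQ_p[G]e_{(a)}}(y)^{2a}\cdot(\wedge_{j=1}^{j=a}\theta_j)(\eta_{\mathcal{X}})\in{\rm Ann}_{\ZZ_p[G]}\bigl((Y_\pi)_{\rm tor}\bigr)={\rm Ann}_{\ZZ_p[G]}\bigl(\sha(A^t_F)[p^\infty]\bigr)\subseteq\ZZ_p[G], \]
so that (\ref{key product}) itself lies in $\ZZ_p[G]$ and annihilates $\sha(A^t_F)[p^\infty]$, as asserted. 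The part I expect to be the real work is the second step: matching, on the nose, the abstractly defined characteristic element of $(C,t)$ with the concrete leading-term element $\mathcal{L}_0$ — tracking the period $\Omega_A^{F/k}$, the root number $w_{F/k}$, the truncation Euler factors $P_v(A_{F/k},1)$, the logarithmic resolvent, and the systematic passage between $A$ and its dual $A^t$ (the source of the anti-involution $\iota_\#$ on the $P_v$-factors). This amounts to carrying out, within the non-commutative machinery developed above, the explicit computation done by hand for abelian $F/k$ in \cite[\S 8]{rbsd}.
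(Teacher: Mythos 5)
Your high-level strategy is correct and matches the paper's: apply Corollary~\ref{cor2} to an arithmetic triple $(C,t,\mathcal{L},\pi,\mathcal{X})$ in which $t$ is the N\'eron--Tate trivialisation, $\mathcal{L}$ is a characteristic element supplied by ${\rm BSD}_p(A_{F/k})$, and the tuples $\theta_\bullet$, $\phi_\bullet$ enter via the duality pairing and a choice of pre-images landing in $Y_\pi$. You also correctly locate the tail of the argument, namely the exact sequence (\ref{sha-selmer}) and the Cassels--Tate pairing converting $(Y_\pi)_{\rm tor}$ into $\sha(A^t_F)[p^\infty]$, and the matching of the abstract idempotents $e_{C,(a)}$ of Definition~\ref{eca} with the arithmetic idempotents $e_{F,(a)}$.

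The genuine gap is your choice of $C$. You take $C$ to be ${\rm SC}_S(A_{F/k};\mathcal{X}_S(\omega_\bullet)(p))$, modified when necessary by the generic construction of \cite[Prop.~2.8]{hse}, whereas the paper's proof uses the $T$\emph{-modified} complex $C_{S,X,T}$ from \cite[Lem.~8.12(ii)]{rbsd}, which depends essentially on the $\ZZ_p[G]$-module $X$ generated by the semi-local points $x_\bullet$ and on the auxiliary set $T$. This difference is not cosmetic. The factor $\prod_{v\in T}\iota_\#(P_v(A_{F/k},1))$ appearing in (\ref{key product}) is precisely the contribution of the $T$-modification to the characteristic element; the generic admissibility modification of \cite[Prop.~2.8]{hse} introduces no such Euler factors. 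Likewise the logarithmic resolvent $\mathcal{LR}^p_{A^t_{F/k}}(x_\bullet)$ arises because the Selmer structure underlying $C_{S,X,T}$ is built from the points $x_\bullet$, not from the differentials $\omega_\bullet$. As a result, the key identity you assert (that some characteristic element $\mathcal{L}$ of your $(C,t)$ satisfies $e_a\cdot\mathcal{L}=\mathcal{L}_0$ with both the $T$-factors and the resolvent present) is false for the complex you chose: for the $\mathcal{X}_S(\omega_\bullet)$-complex, ${\rm BSD}_p$ yields a leading-term element without those factors. You flag this identification as ``the real work'', but that work is carried out in \cite[Lem.~8.12(iii)]{rbsd} specifically for $C_{S,X,T}$, and it cannot be imported if $C$ is taken to be a different complex. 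To repair the argument, replace $C$ by $C_{S,X,T}$, replace its trivialisation by $(h^T_{A,F})^{-1}$, and replace $\pi$ by the finite-kernel surjection onto the module $Y_\pi$ supplied by \cite[Lem.~8.12(ii)]{rbsd} (which contains $\Sel_p(A_F)^\vee$ with finite index, rather than coinciding with $X_\ZZ(A_F)_p$); the remaining steps of your sketch then run essentially as you have written them.
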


\begin{remark}{\em If $C\in D^{\rm a}(\ZZ_p[G])$ is the Nekov\'a\v r-Selmer complex (associated to $x_\bullet$ and $T$) constructed in \cite[Lem. 8.13 (ii)]{rbsd}, then our methods will show that the $R_{(a)}$-module $R_{(a)}\otimes_{\ZZ_p[G]}H^2(C)$ admits a free, quadratic presentation $\Pi$ with the property that, if ${\rm BSD}_p(A_{F/k})$ is valid, then the product (\ref{key product}) with the term $\alpha$ omitted,
belongs to ${\rm Fit}^a_{R_{(a)}}(\Pi)$.
}\end{remark}

We remark on several ways in which Theorem \ref{big conj} either simplifies or becomes more explicit.

\begin{remark}\label{more explicit rem}\label{Temptyset}{\em %\

%\noindent{}(i) 
If $A(F)$ does not contain an element of order $p$, then our methods will show that the prediction in Theorem \ref{big conj} should remain true if the term $\prod_{v \in T}\iota_\#(P_v(A_{F/k},1))$ is omitted from the product (\ref{key product}).  Indeed, in this case one may apply Corollary \ref{cor2} directly to the Nekov\'a\v r-Selmer complex that occurs in Proposition \ref{prop:perfect2} (iii) rather than to its $T$-modification.

%then our methods will show that the prediction in Theorem \ref{big conj} should remain true if the term $\prod_{v \in T}P_v(A_{F/k},1)^\#$ is omitted. For more details see Remark \ref{omit T} below.

%\noindent{}(ii) If one fixes a subset $\{\phi_i: i \in [a]\}$ of $\Hom_{\ZZ_p[G]}(A(F)_p,\ZZ_p[G])$ of cardinality $a$ that generates a free direct summand of rank $a$, then our approach combines with \cite[Th. 3.10(ii)]{bst} to suggest that, as the subset $\{\theta_j: j \in [a]\}$ varies, elements of the form (\ref{key product}) can be used to give an explicit description of the $a$-th Fitting ideal ${\rm Fit}^a_{\ZZ_p[G]}(\Sel_p(A_F)^\vee)$.
}\end{remark}

\begin{remarks}\label{e=1 case}{\em \

\noindent{}(i) In special cases one can either show, or is led to predict, that the idempotent $e_{(a)}$ belongs to $\ZZ_p[G]$ and hence that the term $$\alpha\cdot{\rm nr}_{\QQ_p[G]e_{(a)}}(y)^{2a}$$ in the product (\ref{key product}) can be taken to be any element of $\mathcal{D}(R_{(a)})$.

This is, for example, the case if $a = 0$, since each function $L(A, \psi,z)$ is holomorphic at $z=1$ and thus $e_{(0)}=1$. This situation can also arise naturally in cases with $a=1$ thanks to the existence of Heegner points or, more generally, to the result of Mazur and Rubin in \cite[Th. B]{mr2}. We shall consider the latter cases in detail in \S \ref{dihedralsect} below.

\noindent{}(ii) If in addition $e_{(a)} = 1$, there exist $a$-tuples in $A^t(F)$ and $A(F)$ that are each linearly independent over $\QQ[G]$ and this fact implies the expressions $(\wedge_{j=1}^{j=a}\theta_j)({\rm ht}^{(a)}_{A_{F/k}}(\wedge_{i=1}^{i=a}\phi_i))$ in Theorem \ref{big conj} can be interpreted in terms of classical N\'eron-Tate heights.

To be a little more precise we use the following notation: for ordered $a$-tuples $P_\bullet = \{P_i: i \in [a]\}$ of $A^t(F)_p$ and $Q_\bullet = \{Q_j: j \in [a]\}$ of $A(F)_p$  we define a matrix in ${\rm M}_a(\CC_p[G])$ by setting
\begin{equation*}\label{regulatormatrix} h_{F/k}(P_\bullet, Q_\bullet) := (\sum_{g \in G}\langle g(P_i),Q_j\rangle_{A_F}\cdot g^{-1})_{1\le i,j\le a},\end{equation*}
where $\langle -,-\rangle_{A_F}$ denotes the Neron-Tate height pairing for $A$ over $F$ (and we have again used our fixed isomorphism $\CC\cong\CC_p$).

%We also write $\langle P_\bullet\rangle$ and $\langle Q_\bullet\rangle$ for the $\ZZ_p[G]$-modules that are generated by $P_\bullet$ and $Q_\bullet$ inside $A^t(F)_{p,{\rm tf}}$ and $A(F)_{p,{\rm tf}}$ respectively. %, and there are subsets $\{\theta_i\}_{1\le i\le a}$ of $\Hom_{G}(A^t(F),\ZZ[G])$ and $\{\phi_i\}_{1\le i\le a}$ of $\Hom_{G}(A(F),\ZZ[G])$ for which $(\wedge_{i=1}^{i=a}\theta_i)({\rm ht}^a_{A_{F/k}}(\wedge_{i=1}^{i=a}\phi_i))  = {\rm det}(h(P_\bullet, Q_\bullet))^{-1}$.
Then a direct generalisation of the argument used to prove \cite[Lem. 8.10]{rbsd} gives a direct relationship between the reduced norm of the matrix $e_a\cdot h_{F/k}(P_\bullet, Q_\bullet)$ of ${\rm GL}_a(\CC_p[G]e_a)$ %, the non-commutative Fitting invariants of the $\ZZ_p[G]e_a$-modules $\ZZ_p[G]e_a\otimes_{\ZZ_p[G]}(A^t(F)_{p,{\rm tf}}/\langle P_\bullet\rangle)^\vee_{\rm tor}$ and $\ZZ_p[G]e_a\otimes_{\ZZ_p[G]}(A(F)_{p,{\rm tf}}/\langle Q_\bullet\rangle)^\vee_{\rm tor}$, 
and the set
$$\xi(\ZZ_p[G]e_a)\cdot\left\{(\wedge_{j=1}^{j=a}\theta_j)({\rm ht}^{(a)}_{A_{F/k}}(\wedge_{i=1}^{i=a}\phi_i))\mid \theta_\bullet\subset A^t(F)_p^*,\phi_\bullet\subset A(F)_p^*\right\}.$$

In a related direction, Theorem \ref{dihedralthm} (ii) below also gives a similar explicit computation in cases with $a=1$ but without necessarily assuming that $e_{(1)}=1$.

\noindent{}(iii) Finally we note that the result \cite[Prop. 8.11]{rbsd} allows one to explicitly compute the relevant logarithmic resolvents occurring in (\ref{key product}) in many situations of interest.

}
\end{remarks}

\begin{remark}\label{new remark SC}{\em Under suitable additional hypotheses it is also possible to obtain considerably more explicit versions of the containments predicted by Theorem \ref{big conj}, avoiding the use of logarithmic resolvents, which we expect should be amenable to numerical testing in (non-abelian) examples.

To be more precise, assume that neither $A(F)$ nor $A^t(F)$ has a point of order $p$, that $p$ is unramified in $k$, that all $p$-adic places of $k$ are at most tamely ramified in $F$ and that $A$, $F/k$ and $p$ satisfy the hypotheses (H$_1)$-(H$_5$) that are listed in \S\ref{bkexample}.

Then, after taking account of the equality in \cite[Rem. 6.6]{rbsd}, the argument that is used to prove Theorem \ref{big conj} can be directly applied to the classical Selmer complex ${\rm SC}_p(A_{F/k})$ rather than to the Nekov\'a\v r-Selmer complex associated to $S$ and to our choice of semi-local points $x_\bullet$. 

In any such situations, one finds that the $R_{(a)}$-module $R_{(a)}\otimes_{\ZZ_p[G]}\Sel_p(A_F)^\vee$ admits a free, quadratic presentation $\Pi_{\Sel}$, and that ${\rm BSD}_p(A_{F/k})$ predicts that for any given non-negative integer $a$ and any data as in Theorem \ref{big conj}, the product
\begin{equation}\label{key product2} {\rm nr}_{\QQ_p[G]e_{(a)}}(y)^{2a}\cdot\frac{L^{(a)}_{S_{{\rm ram}}}(A_{F/k},1)}{\Omega_A^{F/k}\cdot w_{F/k}^d}\cdot(\tau^*(F/k)\cdot\prod_{v\in S_{p,{\rm ram}}}\varrho_v)^d\cdot (\wedge_{j=1}^{j=a}\theta_j)({\rm ht}^{(a)}_{A_{F/k}}(\wedge_{i=1}^{i=a}\phi_i))\end{equation}
should belong to ${\rm Fit}^{a}_{R_{(a)}}(\Pi_{\Sel})$, and then also to ${\rm Ann}_{\ZZ_p[G]}(\sha(A_F^t)[p^\infty])$ after multiplication by any element $\alpha$ in $I_{(a)}\cdot\mathcal{D}(R_{(a)})$.

Here $L^{(a)}_{S_{{\rm ram}}}(A_{F/k},1)$ is as defined above but with each $L$-function truncated only at the set of non-archimedean places $S_{{\rm ram}}(F/k)$ which ramify in $F/k$ rather than at all places in $S$ (as in the expression (\ref{key product})). In addition, $\tau^*(F/k)$ is the (modified) global Galois-Gauss sum of $F/k$ defined in \cite[\S 4.2.1]{rbsd}, we have used the notation $S_{p,{\rm ram}}:=S_p(k)\cap S_{{\rm ram}}(F/k)$ and, for each $v$ in this intersection, we have also set
$$\varrho_v:=\sum_{\psi\in{\rm Ir}(G)}\det({\rm N}v\mid V_\psi^{I_v})\cdot e_\psi.$$
Here ${\rm N}v$ denotes the absolute norm of $v$, $I_v$ is the inertia subgroup of $v$ in $G$ and $V_\psi$ is any fixed complex representation of $G$ of character $\psi$.

This special case of Theorem \ref{big conj} is thus itself a strong generalisation and refinement of \cite[Prop. 5.2]{bmw} and of \cite[Cor. 3.5 (ii)]{dmc}. We shall also make products of the form (\ref{key product2}) fully explicit for dihedral twists of elliptic curves in Theorem \ref{dihedralthm}(iii) below.
}\end{remark}

\begin{remark}\label{integrality rk}{\em One may obtain concrete congruences from the integrality claims of Theorem \ref{big conj} or its variant in Remark \ref{new remark SC} exactly as in \cite[Pred. 8.5]{rbsd}. In this regard see also Theorem \ref{dihedralthm}(iii) below.
%one can, for example, proceed as follows. The stated results imply that the product expression $\mathcal{L}$ in either (\ref{key product}) or (\ref{key product2}) belongs to $\ZZ_p[G]$ and hence that for every $g$ in $G$ one has
%
%\[ \sum_{\psi \in \widehat{G}_{A,a}}\psi(g)\mathcal{L}_\psi \equiv 0 \,\,\,({\rm mod}\,\, |G|\cdot \ZZ_p).\]
%
%Here each element $\mathcal{L}_\psi$ of $\CC_p$ is defined by the equality $\mathcal{L} = \sum_{\psi\in \widehat{G}_{A,a}}\mathcal{L}_\psi\cdot e_\psi$ and so can be explicitly related to the value at $z=1$ of the function $z^{-a}L(A,\check\psi,z)$.
} \end{remark}

\subsubsection{The proof of Theorem \ref{big conj}}\label{proof of big conj}

Fix an ordered subset $x_\bullet$ of $A^t(F_p)^\wedge_p$ as well as a set $T$ of places of $k$ as in Theorem \ref{big conj}. Write $X$ for the $\ZZ_p[G]$-module generated by $x_\bullet$.

Then we shall consider the $T$-modified Nekov\'a\v r-Selmer complex $C_{S,X,T}\in D^{\rm a}(\ZZ_p[G])$ that is constructed in \cite[Lem. 8.13 (ii)]{rbsd}, together with the $\CC_p$-trivialisation $(h^T_{A,F})^{-1}$ and, under the assumption that ${\rm BSD}_p(A_{F/k})$ is valid, characteristic element $\calL_T\in\zeta(\CC_p[G])^\times$, that are constructed in \cite[Lem. 8.13 (iii)]{rbsd}.
We recall that for any non-negative integer $a$ one has
% and $C_{S,X}$ for the Nekov\'a\v r-Selmer complex ${\rm SC}_S(A_{F/k};X)$. Then by \cite[Lem. 8.12(i)]{rbsd}, the module $H^1(C_{S,X})$ is torsion-free, so our assumption that $A(F)$ has no point of order $p$ implies that $C_{S,X}$ belongs to $D^{\rm a}(\ZZ_p[G])$.

%In addition, \cite[Lem. 8.12]{rbsd} shows that if ${\rm BSD}_p(A_{F/k})$(iv) is valid, there exists a characteristic element $\mathcal{L}$ for  $(C_{S,X},h_{A,F})$ in $\zeta(\CC_p[G])^\times$ with the property that for any non-negative integer $a$ one has
%
\[ e_a\cdot\mathcal{L}_T=(\prod_{v \in T}\iota_\#(P_v(A_{F/k},1))) 
\cdot\frac{L^{(a)}_{S}(A_{F/k},1)}{\Omega_A^{F/k}\cdot w_{F/k}^d}\cdot \mathcal{LR}^p_{A^t_{F/k}}(x_\bullet).\]
(We note that a different normalisation in the notion of a characteristic element justifies the disparities in sign between the above equality and the one occurring in \cite[Lem. 8.13 (iii)]{rbsd}).

We also recall that from \cite[Lem. 8.13 (ii)]{rbsd} that there exists a surjective homomorphism
$$\pi:H^2(C_{S,X,T})\to Y_\pi$$ with finite kernel and the property that $Y_\pi$ contains $\Sel_p(A_F)^\vee$ as a submodule of finite index. There also exists a canonical injective homomorphism $\iota:H^1(C_{S,X,T})\to A^t(F)_p$ with finite cokernel.

%To do this we will apply the general result of \cite[Th. 3.10(i)]{bst} to the complex $C_{S,X,T}$, isomorphism $h^{T}_{A,F}$ and characteristic element $\mathcal{L}_T$ constructed in Lemma \ref{modifiedlemma}.

We will apply Corollary \ref{cor2} to the triple given by $(C_{S,X,T},(h_{A,F}^T)^{-1},\mathcal{L}_T)$, together with the surjective homomorphism $\pi$.

In order to do so, we fix a non-negative integer $a$ and an $a$-tuple $\phi_\bullet$ in $\Hom_{\ZZ_p[G]}(A(F)_p,\ZZ_p[G])$. We fix a pre-image $\phi_i'$ of each $\phi_i$ under the surjective homomorphism %composite homomorphism
%
%\[ H^2(C_{S,X})\to\Sel_p(A_F)^\vee\to \Hom_{\ZZ_p[G]}(A(F)_p,\ZZ_p[G]),\]
%
%where the first arrow is the canonical map from Proposition \ref{prop:perfect}(iii) and the second is induced by 
occurring in the canonical short exact sequence
\begin{equation}\label{sha-selmer}
 \xymatrix{0 \ar[r] & \sha(A_F)[p^\infty]^\vee \ar[r] & \Sel_p(A_F)^\vee \ar[r]&  \Hom_{\ZZ_p}(A(F)_p,\ZZ_p)\ar[r] & 0.}
\end{equation}
We set $\phi'_\bullet:=(\phi'_i)_{1\leq i\leq a}$ and view this $a$-tuple as comprising elements of $Y_\pi$. % and consider the image $H^2(\theta)(\Phi')$ of $\Phi'$ in $H^2(C_{S,X,T})$, where $\theta$ is the morphism that occurs in the triangle (\ref{modifiedtriangle}) (so that $H^2(\theta)$ is injective).

% in Proposition \ref{prop:perfect}(iii).
%Then, with $\mathcal{L}_T$ the element specified in Lemma \ref{modifiedlemma}(iii), a direct comparison of the definitions of $h_{A,F}^{T}$ and ${\rm ht}_{A_{F/k}}^{a}$ shows that the `higher special element' that is associated via \cite[Def. 3.3]{bst} to 
Then the non-abelian higher special element associated to the data $(C_{S,X,T},(h_{A,F}^T)^{-1},\mathcal{L}_T,\pi,\phi'_\bullet)$ coincides with the pre-image under the bijective map $\bigwedge_{\CC_p[G]}^a\iota_{\CC_p}$ of the element
$$(\prod_{v \in T}\iota_\#(P_v(A_{F/k},1))) 
\cdot\frac{L^{(a)}_{S}(A_{F/k},1)}{\Omega_A^{F/k}\cdot w_{F/k}^d}\cdot \mathcal{LR}^p_{A^t_{F/k}}(x_\bullet)\cdot {\rm ht}^{(a)}_{A_{F/k}}(\wedge_{i=1}^{i=a}\phi_i).$$

%(Here we have also used the fact that, since ${\rm BSD}(A_{F/k})$(ii) is assumed to be valid, the idempotent $e_a$ defined here coincides with the idempotent denoted $e_a$ in \cite[\S3.1]{bst} for the complex $C_{S,X,T}$.)

We fix an $a$-tuple $\theta_\bullet$ in $\Hom_{\ZZ_p[G]}(A^t(F)_p,\ZZ_p[G])$, and identify it with its image under the injective map
\[ \Hom_{\ZZ_p[G]}(A^t(F)_p,\ZZ_p[G]) \to \Hom_{\ZZ_p[G]}(H^1(C_{S,X,T}),\ZZ_p[G])\]
induced by $\iota$.

%the $\ZZ_p[G]e_{(a)}$-module $\ZZ_p[G]e_{(a)}\otimes_{\ZZ_p[G]}H^2({\rm SC}_S(A_{F/k};X))$ admits a free, quadratic presentation $\Pi$, and that
%for any element $\alpha$ of $\ZZ_p[G]\cap \ZZ_p[G]e_{(a)}$ the product
%
%\[\alpha\cdot(\prod_{v \in T}P_v(A_{F/k},1)^\#)\cdot \frac{L^{(a)}_{S_F}(A_{F/k},1)}{\Omega_A^{F/k}\cdot w_{F/k}^d}\cdot \mathcal{LR}^p_{A^t_{F/k}}(x_\bullet)\cdot (\wedge_{j=1}^{j=a}\theta_j)({\rm ht}^{a}_{A_{F/k}}(\wedge_{i=1}^{i=a}\phi_i))\]
%
%both belongs to ${\rm Fit}^a_{\ZZ_p[G]e_{(a)}}(\Pi)$ and to 
Then Corollary \ref{cor2} implies that any element of the form (\ref{key product}) belongs to $${\rm Ann}_{\ZZ_p[G]}((Y_\pi)_{\rm tor})\subseteq{\rm Ann}_{\ZZ_p[G]}((\Sel_p(A_F)^\vee)_{\rm tor}).$$

To complete the proof of Theorem \ref{big conj} it is therefore enough to note that the exact sequence (\ref{sha-selmer}) identifies
 $(\Sel_p(A_F)^\vee)_{\rm tor}$ with $\sha(A_F)[p^\infty]^\vee$ and that the Cassels-Tate pairing identifies $\sha(A_F)[p^\infty]^\vee$ with $\sha(A^t_F)[p^\infty]$.

\subsection{Dihedral twists of elliptic curves}\label{dihedralsect}

In this section we assume that $A$ is an elliptic curve and that $F/k$ is a generalised dihedral extension in the sense of Mazur and Rubin \cite{mr2}.

We recall that this condition means that $G$ has an abelian normal Sylow $p$-subgroup $P$ of index two (where $p$ still denotes an odd prime) and that the conjugation action of any lift to $G$ of the generator of $G/P$ inverts elements of $P$. In the sequel we fix such a lift $\tau\in G$ and also set $K:=F^P$. We write $\epsilon$ for the unique linear non-trivial character of $G$. 

\subsubsection{Statement of the main result}
For any $G$-module $M$ we set $M':=\ZZ[\frac{1}{2}]\otimes_{\ZZ} M$. For any $\psi\in{\rm Ir}(G)$ we write $T_\psi$ for the element $\sum_{g\in G}\psi(g^{-1})\cdot g$ of $Z(\CC_p[G])$ and also use the product of periods $\Omega_A^\psi$ and Artin root number $w_\psi$ specified in \cite[\S 4.1]{rbsd}. We recall that, in particular, one has
$$\Omega^{{\bf 1}_G}_A=\Omega_A^+:=\prod_{v\in S_\CC(k)}\Omega_{A,v}\cdot\prod_{v\in S_\RR(k)}\Omega_{A,v}^+$$ and
$$\Omega^\epsilon_A=\Omega_A^-:=\prod_{v\in S_\CC(k)}\Omega_{A,v}\cdot\prod_{v\in S_\RR(k)}\Omega_{A,v}^-$$
with each individual term $\Omega_{A,v},\Omega_{A,v}^+,\Omega_{A,v}^-$ defined in loc. cit. as a fully explicit classical period.

We still assume to be given a set $S$ of places of $k$ as in (\ref{starSset}).
We again abbreviate $S_{\rm ram}(F/k)$ to $S_{\rm ram}$, while $S_{\rm ram}^{\rm sp}$ will denote the subset of $S_{\rm ram}(F/k)$ comprising places which split in $K/k$. In addition $d_k$ and $d_K$ denote the discriminants of $k$ and of $K$ respectively while $Nf(\phi)$ is be the absolute norm of the Artin conductor of any $\phi$ in ${\rm Ir}(P)$. In addition we will use the `unramified characteristic' $$u_\psi:=\prod_{v\in S_{\rm ram}}{\rm det}(-\Phi_v^{-1}\mid V_\psi^{I_v})$$ for each $\psi\in{\rm Ir}(G)$.

If $Q$ is a given point in $A(F)$ which satisfies $\tau(Q)=(-1)^{i_Q}Q$ with $i_Q\in\{0,1\}$ then for any $\psi\in{\rm Ir}(G)$ we set
$$h_{F,\psi}(Q):=\begin{cases}

\medskip

1, \,\,\,\,\,\,\,\,\,\,\,\,\,\,\,\,\,\,\,\,\,\,\,\,\,\,\,\,\,\,\,\,\,\,\,\,\,\,\,\,\text{ if either }\psi={\bf 1}_G\text{ and }i_Q=1,\text{ or }\psi=\epsilon\text{ and }i_Q=0,\\
\psi(1)|G|^{-1}\langle T_\psi(Q),T_{\check\psi}(Q)\rangle_{A_F},\,\,\,\,\,\,\,\,\,\,\,\,\,\,\,\,\, \,\,\,\,\,\,\,\,\,\,\,\,\,\,\,\,\,\,\,\,\,\,\,\,\,\,\,\,\,\,\,\,\,\,\,\,\,\,\,\,\,\,\,\,\,\,\,\,\,\,\,\,\,\,\,\,\text{ otherwise. }
\end{cases}$$
and then also
$$\mathcal{Q}_\psi:=\begin{cases}

\bigskip

(-1)^{|S_{\rm ram}|}\cdot\sqrt{|d_k|}\cdot\frac{L'_{S_{\rm ram}}(A,1)}{\Omega_A^+\cdot h_{F,{\bf 1}_G}(Q)}, \,\,\,\,\,\,\,\,\,\,\,\,\,\,\,\,\,\,\,\,\,\,\,\,\,\text{ if }\psi={\bf 1}_G,\\ 

\bigskip

(-1)^{|S^{\rm sp}_{\rm ram}|}\cdot\sqrt{|d_K/d_k|}\cdot\frac{L'_{S_{\rm ram}}(A,\epsilon,1)}{\Omega_A^-\cdot h_{F,\epsilon}(Q)}, \,\,\,\,\,\,\,\,\,\,\,\,\,\,\,\text{ if }\psi=\epsilon,\\
u_\psi\cdot\sqrt{|d_K|\cdot Nf(\phi)}\cdot\frac{L'_{S_{\rm ram}}(A,\check\psi,1)}{\Omega_A^\psi\cdot h_{F,\psi}(Q)}, \,\,\,\,\,\,\,\,\,\,\,\,\,\,\,\,\,\,\,\,\text{ if }\psi={\rm Ind}_P^G(\phi)\text{ with }\phi\in{\rm Ir}(P).
\end{cases}.$$

The proof of this result will be given in \S \ref{dihedralproof} below.

\begin{theorem}\label{dihedralthm} Let $F/k$ be a generalised dihedral extension, with $K:=F^P$ as above, and let $A$ be an elliptic curve. Assume that $F/k$ is unramified at all places of $k$ at which $A$ has bad reduction, that all $p$-adic places of $k$ split completely in the quadratic extension $K/k$ and that the rank of $A(K)$ is odd.
\begin{itemize} \item[(i)] Then there exists a point $Q$ in $A(F)$ which satisfies $\tau(Q)=(-1)^{i_Q}Q$ with $i_Q\in\{0,1\}$ and generates a $\ZZ'[G]$-submodule $\langle Q\rangle$ of $A(F)'$ isomorphic to $\ZZ'[G](1+(-1)^{i_Q}\tau)$.

If in addition one has $A(K)[p]=0$ and also $$(1+(-1)^i\tau)\Tr_{F/K}(A(F))\nsubseteq p\cdot A(K)$$ for some $i\in\{0,1\}$, then one may choose the point $Q$ so that $i_Q=i$ and $\langle Q\rangle_p$ is a direct summand of the $\ZZ_p[G]$-module $A(F)_p=A(F)_{p,{\rm tf}}$.
\end{itemize}

In the sequel fix a point $Q$ as in claim (i) and write $t_Q$ for the exponent of the group $\bigl(A(F)_{p,{\rm tf}}/\langle Q\rangle_p\bigr)_{\rm tor}$.
%and any element $t_Q$ of ${\rm Ann}_{\ZZ_p[G]}\left(\bigl(A(F)_{p,{\rm tf}}/\langle Q\rangle_p\bigr)_{\rm tor}\right)$, and 
%For any $\psi\in{\rm Ir}(G)$ we also set
%$$h_{F,\psi}(Q):=\begin{cases}

%\medskip

%1, \,\,\,\,\,\,\,\,\,\,\,\,\,\,\,\,\,\,\,\,\,\,\,\,\,\,\,\,\,\,\,\,\,\,\,\,\,\,\,\,\text{ if either }\psi={\bf 1}_G\text{ and }i_Q=1,\text{ or }\psi=\epsilon\text{ and }i_Q=0,\\
%\psi(1)|G|^{-1}\langle T_\psi(Q),T_{\check\psi}(Q)\rangle_{A_F},\,\,\,\,\,\,\,\,\,\,\,\,\,\,\,\,\, \,\,\,\,\,\,\,\,\,\,\,\,\,\,\,\,\,\,\,\,\,\,\,\,\,\,\,\,\,\,\,\,\,\,\,\,\,\,\,\,\,\,\,\,\,\,\,\,\,\,\,\,\,\,\,\,\text{ otherwise. }
%\end{cases}$$

\begin{itemize}
\item[(ii)] There is an element $\phi_Q$ of $\Hom_{\ZZ_p[G]}(A(F)_p,\ZZ_p[G])$ with the property that
$$e_{(1)}\cdot(\wedge_{j=1}^{j=1}\phi_Q)({\rm ht}_{A_{F/k}}^{(1)}(\wedge_{i=1}^{i=1}\phi_Q))=e_{(1)}\cdot\sum_{\psi\in{\rm Ir}(G)}t_Q^{2\psi(1)}\cdot h_{F,\psi}(Q)^{-1}\cdot e_\psi.$$

%Fix any sets $x_\bullet$ and $T$ as in the first paragraph of Theorem \ref{big conj}, but with $T=\emptyset$ if $A(K)[p]=0$. If the conjecture ${\rm BSD}(A_{F/k})$ of \cite{rbsd} is valid then for any $\beta$ in $\mathcal{D}(\ZZ_p[G]e_{(1)})$, the product
%\begin{multline}\label{dihedralkeyproduct}(1+(-1)^{i_Q}\tau)\cdot\beta\cdot(\prod_{v \in T}\iota_\#(P_v(A_{F/k},1))) \cdot  \mathcal{LR}^p_{A_{F/k}}(x_\bullet)\\
%\cdot{\rm nr}_{\QQ_p[G]}(\iota_\#(t_Q))^2\\ 
%\cdot\bigl(\sum_{\psi\in{\rm Ir}(G)}t_Q^{2\psi(1)}\cdot\frac{L_S'(A,\check\psi,1)}{\Omega_A^\psi\cdot w_\psi\cdot h_{F,\psi}(Q)}\cdot e_\psi\bigr)\end{multline} belongs to $\ZZ_p[G]$ and annihilates $\sha(A_F)[p^\infty]$. 
\end{itemize}

In the sequel we assume that $A(K)[p]=0$ and that the hypotheses (H$_1)$-(H$_4$) that are listed in \S\ref{bkexample} are satisfied.

\begin{itemize}
\item[(iii)] %We set $R_Q:=\ZZ_p[G](1+(-1)^{i_Q}\tau)$ and also
%$$\mathcal{Q}_\psi:=\begin{cases}
%
%\bigskip
%
%(-1)^{|S_{\rm ram}|}\cdot\sqrt{|d_k|}\cdot\frac{L'_{S_{\rm ram}}(A,1)}{\Omega_A^+\cdot h_{F,{\bf 1}_G}(Q)}, \,\,\,\,\,\,\,\,\,\,\,\,\,\,\,\,\,\,\,\,\,\,\,\,\,\,\,\,\text{ if }\psi={\bf 1}_G,\\ 
%
%\bigskip
%
%(-1)^{|S^{\rm sp}_{\rm ram}|}\cdot\sqrt{|d_K/d_k|}\cdot\frac{L'_{S_{\rm ram}}(A,\epsilon,1)}{\Omega_A^-\cdot h_{F,\epsilon}(Q)}, \,\,\,\,\,\,\,\,\,\,\,\,\,\,\,\,\,\text{ if }\psi=\epsilon,\\
%u_\psi\cdot\sqrt{|d_K|\cdot Nf(\phi)}\cdot\frac{L'_{S_{\rm ram}}(A,\check\psi,1)}{\Omega_A^\psi\cdot h_{F,\psi}(Q)}, \,\,\,\,\,\,\,\,\,\,\,\,\,\,\,\,\,\,\,\,\,\,\text{ if }\psi={\rm Ind}_P^G(\phi)\text{ with }\phi\in{\rm Ir}(P).
%\end{cases}$$

The $R_{(1)}$-module $R_{(1)}\otimes_{\ZZ_p[G]}\Sel_p(A_F)^\vee$ admits a free, quadratic presentation $\Pi_{\rm Sel}$ with the property that,
if $p$ is unramified in $F/\QQ$ and the conjecture ${\rm BSD}(A_{F/k})$ of \cite{rbsd} is valid, then the product
\begin{equation}\label{explicitkeyproduct}{\rm nr}_{\QQ_p[G]e_{(1)}}(y)^2%(1+(-1)^{i_Q}\tau)
%\cdot{\rm nr}_{\QQ_p[G]}(\iota_\#(t_Q))^2 
\cdot\bigl(\sum_{\psi\in{\rm Ir}(G)}t_Q^{2\psi(1)}\cdot\mathcal{Q}_\psi\cdot e_\psi\bigr)\end{equation}
belongs to ${\rm Fit}^1_{R_{(1)}}(\Pi_{\rm Sel})$ for any $y\in I_{(1)}$ and, after multiplication by any element of $\alpha\in I_{(1)}\cdot\mathcal{D}(R_{(1)})$, also to ${\rm Ann}_{\ZZ_p[G]}(\sha(A_F)[p^\infty])$.

In particular, we fix any elements $\delta_\psi$ in the inverse differents of the fields generated over $\QQ_p$ by the values of each $\psi\in{\rm Ir}(G)$. We set $m_\psi:=n(2\psi(1)+1)$.

Then if ${\rm rk}(A(k))=0$, one has
$$\sum_{\psi\neq{\bf 1}_G}p^{m_\psi}t_Q^{2\psi(1)}\check\psi(g)\delta_\psi\mathcal{Q}_\psi\in\ZZ_p$$ for every $g\in G$.

If instead ${\rm rk}(A(k))={\rm rk}(A(K))=1$, one has
$$\sum_{\psi\neq\epsilon}p^{m_\psi}t_Q^{2\psi(1)}\check\psi(g)\delta_\psi\mathcal{Q}_\psi\in\ZZ_p$$
for every $g\in G$.

In all other cases, one has
$$\sum_{\psi\in{\rm Ir}(G)}t_Q^{2\psi(1)}\check\psi(g)\delta_\psi\mathcal{Q}_\psi\in\ZZ_p$$
for every $g\in G$.

%$$t_Q^2\bigl(2\mathcal{Q}_{{\bf 1}_G}+t_Q^2\sum_{\psi\neq{\bf 1}_G,\epsilon}(\check\psi(g)+\check\psi(\tau g))\delta_\psi\mathcal{Q}_\psi\bigr)\in\ZZ_p$$ for every $g\in G$ if $i_Q=0$, or that
%$$t_Q^2\bigl(2\epsilon(g)\mathcal{Q}_{\epsilon}+t_Q^2\sum_{\psi\neq{\bf 1}_G,\epsilon}(\check\psi(g)-\check\psi(\tau g))\delta_\psi\mathcal{Q}_\psi\bigr)\in\ZZ_p$$ for every $g\in G$ if $i_Q=1$.
\item[(iv)] If $P$ is cyclic and for every subgroup $H$ of $P$ the restriction map $\sha(A_{F^H})[p^\infty]\to\sha(A_F)[p^\infty]$ is injective, then one may choose the point $Q$ so that $\langle Q\rangle_p$ is a direct summand of the $\ZZ_p[G]$-module $A(F)_p=A(F)_{p,{\rm tf}}$, so in particular $t_Q=1$.
\end{itemize}
\end{theorem}
\begin{remarks}{\em \

\noindent{}(i) The proof of the final assertions of claim (iii) will rely on making explicit choices of $y\in I_{(1)}$ and of $\alpha\in I_{(1)}\cdot\mathcal{D}(R_{(1)})$ which would also determine fully explicit elements of ${\rm Fit}^1_{R_{(1)}}(\Pi_{\rm Sel})$ and of ${\rm Ann}_{\ZZ_p[G]}(\sha(A_F)[p^\infty])$.

\noindent{}(ii) Both claim (ii) of Theorem \ref{dihedralthm}, when combined with Theorem \ref{big conj}, and claim (iii) of Theorem \ref{dihedralthm}, extend the congruence relations predicted by Burns, Wuthrich and the first author in Theorem 5.8 of \cite{bmw}. We recall that in loc. cit., in addition to all the hypotheses of claim (iii), it was assumed that the rank of $A(K)$ is equal to 1 and that $\sha(A_K)[p^\infty]$ vanishes.
In particular, the assertions of claim (iii) hold unconditionally for the families for which the conjecture ${\rm BSD}(A_{F/k})$ was verified, either theoretically or numerically, via Theorem 5.8 of loc. cit..

\noindent{}(iii) Moreover, our methods lead to more general, albeit less explicit, versions of claim (iii) which do not require $p$ to be unramified in $F/\QQ$ while at the same time circumventing the need for the logarithmic resolvents occurring in the much more general claim (ii).

For instance, if $p$ is unramified in $k/\QQ$ rather than in $F/\QQ$ (so in particular if $k=\QQ$), then under the assumed compatibility of specific cases of the `local epsilon constant conjecture', and the assumed validity of a conjecture of Breuning \cite[Conj. 3.2]{20}, the given claims for the element (\ref{explicitkeyproduct}) would remain true after multiplying it by the product, over all $p$-adic places of $k$ that are wildly ramified in $F$, of the fully explicit elements occurring in the display \cite[(51)]{rbsd}.

In this regard we recall that work of Breuning \cite{19,20}, of Bley and Debeerst \cite{15} and of Bley and Cobbe \cite{13} provides verifications of Breuning's conjecture for natural families of dihedral extensions of $\QQ$ in which $p$ is wildly ramified. Similarly, Bley and Cobbe \cite{14} have proved the compatibility of the relevant cases of the local epsilon constant conjecture for certain families of wildly ramified extensions. See \cite[Rem. 6.7, Rem. 6.8, \S 5.3]{rbsd} for a more detailed account.

%\noindent{}(iii) See Proposition 9.15 (and Remark 9.16) in \cite{rbsd} for additional conjectural properties of the elements $\mathcal{Q}_\psi$ that are related to (strong versions of) the conjecture of Deligne and Gross and to integral congruence relations involving Mazur-Tate regulators.

}\end{remarks}

\begin{example}\label{heegner}{\em Assume that the elliptic curve $A$ is defined over $k=\QQ$ and has conductor $N$. We fix a modular parametrisation $\varphi_A:X_0(N)\to A$ of minimal degree.

We assume that the given generalised dihedral extension $F$ of $\QQ$ contains an imaginary quadratic field $K$ in which all prime divisors of $N$ split, and in addition that the conductor $c$ of $F$ is a square-free product of primes that are both inert in $K$ and coprime to $N$.

We write $K_c$ for the ring class field of $K$ of conductor $c$ and set $G_c:={\rm Gal}(K_c/K)$. Then a standard construction described in \cite[\S 12.1]{rbsd} gives a canonical point $x_c$ on $X_0(N)(K_c)$ and we follow loc. cit. in defining a `higher Heegner point' $y_c:=\varphi_A(x_c)$ of $A(K_c)$.

Assuming $L(A_K,z)$ vanishes to order one at $z=1$, Zhang's generalisation \cite{zhang01, zhang} of the seminal results of Gross and Zagier in \cite{GZ} implies that for every $\phi$ in ${\rm Ir}(G_c)$ the function $L(A_K,\phi,z)$ vanishes to order one at $z=1$ and that the number $\langle T_{\phi}(y_c),T_{\check\phi}(y_c)\rangle _{A_{K_c}}$ is non-zero.
%, that the element $e_\phi (y_c)$ of $\CC\cdot A(K_c)$ is non-zero

In this setting one therefore knows that the rank of $A(K)$ is equal to 1 and, via the proof of this result given in \S \ref{dihedralproof} below, also that the point $Q:=\Tr_{K_c/F}(y_c)$ of $A(F)$ satisfies the conditions of claim (i) of Theorem \ref{dihedralthm}.

If we now assume that $A(K)[p]=0$ (with the prime $p$ determined by the generalised dihedral extension $F/\QQ$), that the hypotheses (H$_1)$-(H$_4$) that are listed in \S\ref{bkexample} are satisfied and that the point $\Tr_{K_1/K}(y_1)$ of $A(K)$ is not divisible by $p$, then it is easy to see that the number $t_Q$ occurring in the expression (\ref{explicitkeyproduct}) is equal to 1.

It follows that claim (iii) of Theorem \ref{dihedralthm} describes explicit relationships, between the arithmetic of higher Heegner points of $A$ in generalised dihedral extensions $F/\QQ$ and the Galois module structure of Tate-Shafarevich and Selmer groups in $F/\QQ$, that are encoded in the refined Birch and Swinnerton-Dyer conjecture ${\rm BSD}(A_{F/\QQ})$ of \cite{rbsd}. In particular, this claim extends the description of such relationships that is given in Theorem 12.2 (ii) of loc. cit., which is instead solely concerned with the ${\rm Gal}(F/K)$-module structure of $\sha(A_F)$.

In fact in the setting of this Example, it is straightforward to prove a converse to Theorem \ref{dihedralthm}(iii) that gives an explicit criterion to verify the validity of (the $p$-component) of conjecture ${\rm BSD}(A_{F/\QQ})$ in terms of the properties of the point $Q=\Tr_{K_c/F}(y_c)$. This approach will be developed in greater generality in future work.
}\end{example}

\subsubsection{The proof of Theorem \ref{dihedralthm}}\label{dihedralproof} To prove claim (i) we use the result \cite[Thm. B]{mr2} of Mazur and Rubin, which implies that the $\QQ_p[P]$-module $\QQ_p\otimes_{\ZZ_p}\Sel_p(A_F)^\vee\cong\QQ_p\otimes_{\ZZ_p}A(F)_p^*$ has a direct summand that is isomorphic to $\QQ_p[P]$. The same is thus true of the $\QQ_p[P]$-module $\QQ_p\otimes_{\ZZ_p}A(F)_p$.

The $\QQ_p[G]$-submodule generated by this summand inside of $\QQ_p\otimes_{\ZZ_p}A(F)_p$ must then contain a copy of either $\QQ_p[G](1+\tau)$ or of $\QQ_p[G](1-\tau)$. Setting $e_{\pm}:=\frac{1\pm\tau}{2}$ we then deduce the existence of an injective homomorphism of $\ZZ_p[G]$-modules $\ZZ_p[G]e_\pm\to A(F)_p$. The existence of the claimed point $Q$ then follows easily from Roiter's Lemma \cite[(31.6)]{curtisr}.

Now, if one has $(1+(-1)^i\tau)\Tr_{P}(A(F))\nsubseteq p\cdot A(K)$ for some $i\in\{0,1\}$, proving that one may choose the point $Q$ with $i_Q=i$ is a relatively straightforward exercise which we leave to the reader. Moreover, after replacing such a point $Q$ by $(1+(-1)^{i_Q})Y+p^N Q$ if necessary, for any given point $Y$ of $A(F)$ with $(1+(-1)^{i_Q})\Tr_{P}(Y)\notin p\cdot A(K)$ and a large enough positive integer $N$, one may assume in addition that $\Tr_{P}(Q)\notin p\cdot A(K)$.

To conclude the proof of claim (i) we shall now use this condition, together with the assumption $A(K)[p]=0$, to verify that $\langle Q\rangle_p$ is a direct summand of the $\ZZ_p[G]$-module $A(F)_p$. We first observe that the two given conditions imply that the quotient $A(K)_p/\ZZ_p\cdot\Tr_{P}(Q)$ is $\ZZ_p$-free.

But $\langle Q\rangle_p$ is a cohomologically-trivial $P$-module and so one has a canonical isomorphism
$$A(K)_p/\langle Q\rangle_p^P\cong(A(F)_p/\langle Q\rangle_p)^P.$$
Since in addition $\langle Q\rangle_p^P=\ZZ_p\cdot\Tr_{P}(Q)$ we deduce that $(A(F)_p/\langle Q\rangle_p)^P$ is also $\ZZ_p$-free. Since $P$ is a $p$-group, the quotient $A(F)_p/\langle Q\rangle_p$ must itself be $\ZZ_p$-free.

This last fact implies that there is a canonical isomorphism
$${\rm Ext}_{\ZZ_p[G]}^1(A(F)_p/\langle Q\rangle_p,\langle Q\rangle_p)\cong H^1(G,\Hom_{\ZZ_p}(A(F)_p/\langle Q\rangle_p,\langle Q\rangle_p))$$ and, since the latter group vanishes (because $\langle Q\rangle_p$ is a projective $\ZZ_p[G]$-module), we finally find that $\langle Q\rangle_p$ is indeed a direct summand of the $\ZZ_p[G]$-module $A(F)_p$. This completes the proof of claim (i).

We now consider claim (ii). We %fix an element $t_Q\in{\rm Ann}_{\ZZ_p[G]}\left(\bigl(A(F)_{p,{\rm tf}}/\langle Q\rangle_p\bigr)_{\rm tor}\right)$ and
observe first that there is a canonical exact sequence
$$\Hom_{\ZZ_p[G]}(A(F)_p,\ZZ_p[G])\to\Hom_{\ZZ_p[G]}(\langle Q\rangle_p,\ZZ_p[G])\to\bigl(A(F)_{p,{\rm tf}}/\langle Q\rangle_p\bigr)_{\rm tor}^\vee.$$
We define $Q^*\in\Hom_{\ZZ_p[G]}(\langle Q\rangle_p,\ZZ_p[G])$ by setting $Q^*(Q):=1$ and also $Q^*(\pi\cdot Q):=0$ for each $\pi\in P$. We then let $\phi_Q$ be any element of $\Hom_{\ZZ_p[G]}(A(F)_p,\ZZ_p[G])$ whose restriction to $\langle Q\rangle_p$ is equal to $t_Q\cdot Q^*$.

%We now write $e_Q$ for the idempotent $(1+(-1)^{i_Q}\tau)/2$ of $\ZZ_p[G]$, fix any sets $x_\bullet$ and $T$ as in the statement of Theorem \ref{big conj} and assume that the conjecture ${\rm BSD}_p(A_{F/k})$ of \cite{rbsd} is valid. We set $a=1$ as well as $\theta_1=\phi_1=\phi_Q$ and $y=e_Q$ and deduce from this result that, for any $\beta\in\mathcal{D}(\ZZ_p[G]e_{(1)})$, the product
%$$e_Q\cdot\beta\cdot(\prod_{v \in T}\iota_\#(P_v(A_{F/k},1))) \cdot  \frac{L^{(1)}_{S}(A_{F/k},1)}{\Omega_A^{F/k}\cdot w_{F/k}}\cdot \mathcal{LR}^p_{A_{F/k}}(x_\bullet)\cdot (\phi_Q)({\rm ht}^{(1)}_{A_{F/k}}(\phi_Q))$$ belongs to $\ZZ_p[G]$ and annihilates $\sha(A_F)[p^\infty]$. (The possibility of choosing $T$ to be the empty set if $A(K)[p]$ vanishes is justified by Remark \ref{Temptyset}.)

Then an explicit computation shows that
\begin{align}\label{tqcomputation}&e_{(1)}\cdot(\wedge_{j=1}^{j=1}\phi_Q)({\rm ht}_{A_{F/k}}^{(1)}(\wedge_{i=1}^{i=1}\phi_Q))\notag\\ =&e_{(1)}\cdot{\rm nr}_{\QQ_p[G]}(t_Q)^2\cdot{\rm nr}_{\CC_p[G]e_{(1)}}\bigl(e_{(1)}\cdot(\sum_{g\in G}\langle g(Q),Q\rangle_{A_F}\cdot g^{-1})\bigr)^{-1}\notag\\
=&e_{(1)}\cdot\bigl(\sum_{\psi\in{\rm Ir}(G)}t_Q^{\psi(1)}\cdot e_\psi\bigr)^2\cdot\bigl(\sum_{\psi\in{\rm Ir}(G)}h_{F,\psi}(Q)\cdot e_\psi\bigr)^{-1},\end{align}
as required.

%The proof of claim (ii) is now completed as an immediate consequence of the explicit definition of $L^{(1)}_{S}(A_{F/k},1)$ and of the explicit definitions $\Omega_A^{F/k}:=\sum_{\psi\in{\rm Ir}(G)}\Omega_A^\psi\cdot e_\psi$ and $w_{F/k}:=\sum_{\psi\in{\rm Ir}(G)}w_\psi\cdot e_\psi$ that are given in \cite[\S 4.1]{rbsd}.

We now assume that $A(K)[p]=0$ and that the hypotheses (H$_1)$-(H$_4$) that are listed in \S\ref{bkexample} are satisfied, and proceed to deduce the validity of claim (iii) from the variant of Theorem \ref{big conj} that is given in Remark \ref{new remark SC}.

In fact, in view of the computation (\ref{tqcomputation}) and of the explicit definitions of $L^{(1)}_{S}(A_{F/k},1)$, of $\Omega_A^{F/k}:=\sum_{\psi\in{\rm Ir}(G)}\Omega_A^\psi\cdot e_\psi$ and of $w_{F/k}:=\sum_{\psi\in{\rm Ir}(G)}w_\psi\cdot e_\psi$, the first assertion of claim (iii) is valid because if $p$ is unramified in $F/\QQ$ then
$$\frac{L^{(1)}_{S_{\rm ram}}(A_{F/k},1)}{\Omega_A^{F/k}\cdot w_{F/k}}\cdot\tau^*(F/k)\cdot\bigl(\sum_{\psi\in{\rm Ir}(G)}h_{F,\psi}(Q)\cdot e_\psi\bigr)^{-1}=\sum_{\psi\in{\rm Ir}(G)}\mathcal{Q}_\psi\cdot e_\psi.$$ This equality is itself an immediate consequence of the computation of the term $\tau^*(F/k)\cdot w_{F/k}^{-1}$ that is carried out in \cite[(21)]{bmw}.

As for the final assertion of claim (iii), we use the fact that, for any family $(C_\psi)_{\psi\in{\rm Ir}(G)}$ of elements of $\CC_p$, the sum \begin{equation}\label{observationbelow}\sum_{\psi\in{\rm Ir}(G)} C_\psi\cdot e_\psi=\sum_{g\in G}\bigl(|G|^{-1}\sum_{\psi\in{\rm Ir}(G)}\psi(1)\check\psi(g)C_\psi\bigr)\cdot g\end{equation} belongs to $\ZZ_p[G]$ if and only if, for each $g\in G$, the sum $\sum_{\psi\in{\rm Ir}(G)}\psi(1)\check\psi(g)C_\psi$ belongs to $|G|\cdot\ZZ_p$.

We will combine this observation with the first assertion of claim (iii). We first observe that from claim (i) one easily deduces that
\begin{equation*}\label{e(1)}e_{(1)}=\begin{cases}1-e_{{\bf 1}_G},\,\,\,\,\,\,\,\,\,\,\,\,\,\,\,\,\,\,\,\,\,\,\,\,\,\,\,\,\,\,\,\,\text{if }{\rm rk}(A(k))=0,\\
1-e_{\epsilon},\,\,\,\,\,\,\,\,\,\,\,\,\,\,\,\,\,\,\,\,\,\,\,\,\,\,\,\,\,\,\,\,\,\,\,\,\text{if }{\rm rk}(A(k))={\rm rk}(A(K))=1,\\
1,\,\,\,\,\,\,\,\,\,\,\,\,\,\,\,\,\,\,\,\,\,\,\,\,\,\,\,\,\,\,\,\,\,\,\,\,\,\,\,\,\,\,\,\,\,\,\,\,\text{otherwise}.\end{cases}\end{equation*}
We may thus use the element
\begin{equation*}\label{ychoice}y=\begin{cases}p^n(1-e_{{\bf 1}_G}),\,\,\,\,\,\,\,\,\,\,\,\,\,\,\,\,\,\,\,\,\,\,\,\,\,\,\,\,\,\,\text{if }{\rm rk}(A(k))=0,\\
p^n(1-e_{\epsilon}),\,\,\,\,\,\,\,\,\,\,\,\,\,\,\,\,\,\,\,\,\,\,\,\,\,\,\,\,\,\,\,\,\,\,\text{if }{\rm rk}(A(k))={\rm rk}(A(K))=1,\\
1,\,\,\,\,\,\,\,\,\,\,\,\,\,\,\,\,\,\,\,\,\,\,\,\,\,\,\,\,\,\,\,\,\,\,\,\,\,\,\,\,\,\,\,\,\,\,\,\,\,\,\,\,\,\,\,\,\,\text{otherwise}\end{cases}\end{equation*}
of $I_{(1)}$ to construct the product (\ref{explicitkeyproduct}).
In addition, for any given elements $\delta_\psi$ as in the statement of this final assertion, the arguments of Johnston and Nickel in \cite[\S 6.4]{jn} then imply that we may use the element $$\alpha:=y\cdot|G|\sum_{\psi\in{\rm Ir}(G)}\psi(1)^{-1}\delta_\psi e_\psi$$ of $I_{(1)}\cdot\mathcal{D}(R_{(1)})$.

From the first assertion of claim (iii) we then know that the element
\begin{align*}&\alpha\cdot{\rm nr}_{\QQ_p[G]e_{(1)}}(y)^2\cdot\bigl(\sum_{\psi\in{\rm Ir}(G)}t_Q^{2\psi(1)}\cdot\mathcal{Q}_\psi\cdot e_\psi\bigr)\\ %=&y\cdot{\rm nr}_{\QQ_p[G]e_{(1)}}(y)^2\cdot \bigl(|G|\sum_{\psi\in{\rm Ir}(G)}\psi(1)^{-1}\delta_\psi e_\psi\bigr)
%\cdot\bigl(\sum_{\psi\in{\rm Ir}(G)}t_Q^{2\psi(1)}\cdot\mathcal{Q}_\psi\cdot e_\psi\bigr)\\ 
=&y\cdot{\rm nr}_{\QQ_p[G]e_{(1)}}(y)^2\cdot\bigl(\sum_{\psi\in{\rm Ir}(G)}|G|\cdot t_Q^{2\psi(1)}\cdot\psi(1)^{-1}\cdot\delta_\psi\cdot\mathcal{Q}_\psi\cdot e_\psi\bigr)\end{align*}
belongs to ${\rm Ann}_{\ZZ_p[G]}(\sha(A_F)[p^\infty])\subseteq\ZZ_p[G]$.

Now for our chosen element $y$ we have
$$y\cdot{\rm nr}_{\QQ_p[G]e_{(1)}}(y)^2=\begin{cases}\sum_{\psi\neq{\bf 1}_G}p^{n(2\psi(1)+1)}e_\psi,\,\,\,\,\,\,\,\,\,\,\,\,\,\,\,\,\,\,\,\,\,\,\,\,\,\,\,\text{if }{\rm rk}(A(k))=0,\\
\sum_{\psi\neq\epsilon}p^{n(2\psi(1)+1)}e_\psi,\,\,\,\,\,\,\,\,\,\,\,\,\,\,\,\,\,\,\,\,\,\,\,\,\,\,\,\,\,\,\,\text{if }{\rm rk}(A(k))={\rm rk}(A(K))=1,\\
1,\,\,\,\,\,\,\,\,\,\,\,\,\,\,\,\,\,\,\,\,\,\,\,\,\,\,\,\,\,\,\,\,\,\,\,\,\,\,\,\,\,\,\,\,\,\,\,\,\,\,\,\,\,\,\,\,\,\,\,\,\,\,\,\,\,\,\,\,\,\,\,\,\,\,\,\,\,\,\,\text{otherwise},\end{cases}$$
and so the claimed explicit integrality conditions follow from the general argument given just below (\ref{observationbelow}).

We finally note that the validity of claim (iv) follows directly upon combining the result \cite[Thm. B]{mr2} of Mazur and Rubin used above with a (dual) variant of \cite[Thm. 2.7 (ii)]{bmcw} that replaces the dependence of this result on Proposition 3.1 from loc. cit. (which is formulated in terms of cokernels of norm maps on $p$-primary Tate-Shafarevich groups) by the use of the result \cite[Lem. 3.3]{ksdmwg} (formulated instead in terms of kernels of restriction maps).

%\end{proof}

%The prediction that elements of the form (\ref{dihedralkeyproduct}) or (\ref{explicitkeyproduct}) should belong to $\ZZ_p[G]$ immediately translates into a family of explicit congruence relations between their Wedderburn components. Indeed, for any family $(C_\psi)_{\psi\in{\rm Ir}(G)}$ of elements of $\CC_p$, the sum $$\sum_{\psi\in{\rm Ir}(G)} C_\psi\cdot e_\psi=\sum_{g\in G}\bigl(|G|^{-1}\sum_{\psi\in{\rm Ir}(G)}\psi(1)\check\psi(g)C_\psi\bigr)\cdot g$$ belongs to $\ZZ_p[G]$ if and only if, for each $g\in G$ the sum $\sum_{\psi\in{\rm Ir}(G)}\psi(1)\check\psi(g)C_\psi$ belongs to $|G|\cdot\ZZ_p=|P|\cdot\ZZ_p$.

%\subsection*{Acknowledgements} The authors are very grateful to David Burns for many interesting discussions, helpful comments on an earlier version, and his encouragement regarding this project, as well as for providing them an early version of \cite{nagm}. 

%The first author is also grateful to Werner Bley, Cornelius Greither, Henri Johnston, Andreas Nickel, Takamichi Sano, Stefano Vigni and Christian Wuthrich for some helpful conversations and correspondence. We particularly thank Andreas Nickel for pointing out a minor mistake in an earlier version of the proof of Lemma \ref{rationality}.

%The first author acknowledges financial support from the Spanish Ministry of Economy and Competitiveness, through the `Severo Ochoa Programme for Centres of Excellence in R\&D' (SEV-2015-0554) as well as through project MTM2016-79400-P.

\Addresses
\end{document}